\newtheorem{thm}{Theorem}[section]
\newtheorem{cor}[thm]{Corollary}
\newtheorem{lem}[thm]{Lemma}
\newtheorem{prop}[thm]{Proposition}
\theoremstyle{definition}
\newtheorem{defn}[thm]{Definition}
\theoremstyle{remark}
\newtheorem{rem}[thm]{Remark}
\numberwithin{equation}{section}
\newcommand{\R}{\mathbb R}
\newcommand{\Z}{\mathbb Z}
\newcommand{\D}{\mathbb D}
\newcommand{\F}{\mathcal{F}}
\newcommand{\PP}{{\mathbb P}}
\newcommand{\ra}{\rightarrow}
\newcommand{\g}{\gamma}
\newcommand{\G}{\Gamma}
\newcommand{\OO}{{\mathcal O}}
\newcommand{\Sb}{{\mathbb S}}
\newcommand{\sz}{SL(2,\Z)}
\newcommand{\h}{\mathcal H}
\begin{document}


\title[Boundary maps for Fuchsian groups]{Structure of attractors for boundary maps associated to Fuchsian groups}

\dedicatory{Dedicated to the memory of Roy Adler}

\author{Svetlana Katok}
\address{Department of Mathematics, The Pennsylvania State University, University Park, PA 16802} 
\email{sxk37@psu.edu}

\author{Ilie Ugarcovici}
\address{Department of Mathematical Sciences, DePaul University, Chicago, IL 60614}
\email{iugarcov@depaul.edu}
\thanks{The second author is partially supported by a Simons Foundation Collaboration Grant}

\date{September 30, 2016; revised May 1, 2017; accepted for publication in Geometriae Dedicata}
\subjclass[2010]{37D40}
\keywords{Fuchsian groups, reduction theory, boundary maps, attractor}

\begin{abstract}
We study dynamical properties of generalized Bowen-Series boundary maps associated to cocompact torsion-free Fuchsian groups. These maps are defined on the unit circle (the boundary of the Poincar\'e disk) by the generators of the group and have a finite set of discontinuities. We study the two forward orbits of each discontinuity point and show that for a family of such maps the \emph{cycle property} holds: the orbits 
coincide after finitely many steps.
We also show that for an open set of discontinuity points the associated two-dimensional natural extension maps possess global attractors with  \emph{finite rectangular structure}. These two properties belong to the list of ``good'' reduction algorithms, equivalence or implications between which were suggested by Don Zagier \cite{Z}.
\end{abstract}

\maketitle

\section{Introduction}
Let $\G$ be a finitely generated Fuchsian group of the first kind acting on the hyperbolic plane. We will use either the
upper half-plane model $\h$ or the unit disk model $\mathbb D$, and will denote the Euclidean boundary for either model by $\Sb$: for the upper half plane $\Sb=\partial(\h)=\PP^1(\R)$, and for the unit disk $\Sb=\partial(\mathbb D)=\mathbf{S}^1$. 

Let $\F$ be a fundamental domain for  $\G$ with an even number $N$ of sides  identified by the set of generators $G=\{T_1,\dots ,T_N\}$ of $\G$, and $\tau:\Sb\to G$ be a {surjective} map locally constant on $\Sb\setminus J$, where $J=\{x_1,\dots, x_N\}$ is an arbitrary set of jumps.  
A {\em boundary map} $f:\Sb\to\Sb$ is defined by $f(x)=\tau(x)x$. It is a piecewise fractional-linear map whose set of discontinuities  is $J$. 
Let $\Delta=\{(x,x)\mid x\in\Sb\}\subset \Sb\times\Sb$ be the diagonal of $\Sb\times\Sb$, and
$F:\Sb\times\Sb\setminus\Delta\to\Sb\times\Sb\setminus\Delta$ be given by
\[
F(x,y)=(\tau(y)x,\tau(y)y).
\]
This is a {\em (natural) extension} of $f$, and if
we identify $(x,y)\in \Sb\times\Sb\setminus\Delta$ with
an oriented geodesic from $x$ to $y$, we can think of
$F$ as a map on geodesics $(x,y)$ which 
 we will also call a {\em reduction map}.

Several years ago Don Zagier\cite{Z} proposed a list of possible notions of ``good" reduction algorithms associated to Fuchsian groups and conjectured equivalences or implications between them. 
In this paper we consider two of these notions, namely the properties that ``good" reduction algorithms should (i) satisfy the cycle property, and (ii) have an attractor with finite rectangular structure. 
We prove that {\em for each cocompact torsion-free Fuchsian group there exist families of reduction algorithms which satisfy these properties}. 
Thus our results are contributions towards Zagier's conjecture.

Although the statement that each Fuchsian group admits a ``good'' reduction algorithm  is 
not part of Zagier's conjecture, it is certainly related to it, and
for the purposes of this paper, we state it here.

\subsection*{Reduction Theory Conjecture for Fuchsian groups} For every Fuchsian group $\G$ there exist $\F, G$ as above, and an open set of $J's$ in $\Sb^N$ such that
 \begin{enumerate}
\item The map $F$ possesses a bijectivity domain $\Omega$
having a {\em finite rectangular structure}, i.e., bounded by non-decreasing step-functions with a finite 
number of steps.
\item Every point $(x,y)\in \Sb\times\Sb\setminus\Delta$  is mapped to $\Omega$ after finitely many iterations of $F$.
\end{enumerate}
\begin{rem} If property (2) holds, then $\Omega$ is a global attractor for the map $F$, i.e. 
\begin{equation}\label{att}
\Omega=\bigcap_{n=0}^\infty F^n(\Sb\times\Sb\setminus\Delta).
\end{equation}
\end{rem}
This conjecture was proved by the authors in \cite{KU3} for $\G=\sz$ and boundary maps associated to $(a,b)$-continued fractions. Notice that for some classical cases of continued fraction algorithms property (2) holds only for almost every point, while property (\ref{att}) remains valid.

 In this paper we address the conjecture for surface groups.  In the Poincar\'e unit disk model $\D$ endowed with the hyperbolic metric
 \begin{equation}\label{hypmetric}
 \frac{2|dz|}{1-|z|^2},
 \end{equation}
 let $\G$ be a Fuchsian group, i.e. a discrete group of orientation preserving isometries of $\D$, acting freely on $\D$ with $\G\backslash \D$ compact domain.
Such  $\G$ is called a {\em surface group}, and the quotient $\G\backslash \D$ is a compact surface of constant negative curvature $-1$ of a certain genus $g>1$.  
A classical (Ford) fundamental domain for $\G$ is a  $4g$-sided regular polygon centered at the origin (see {a sketch of} the construction in \cite{K2} in the manner of \cite{JS}, and for the complete proof see \cite{M}).
A more suitable for our purposes $(8g-4)$-sided fundamental domain $\mathcal F$ was described by Adler and Flatto in \cite{AF3}. They showed that all angles of $\mathcal F$ are equal to $\frac{\pi}2$ and, therefore, its sides are geodesic segments which satisfy the {\em extension condition} of Bowen and Series \cite{BoS}:  the geodesic extensions of these segments never intersect the interior of the tiling sets $\gamma\mathcal F$, $\gamma\in \G$. Figure \ref{fund} shows such a construction for $g=2$. 

\begin{figure}[htb]
\includegraphics[scale=1.1]{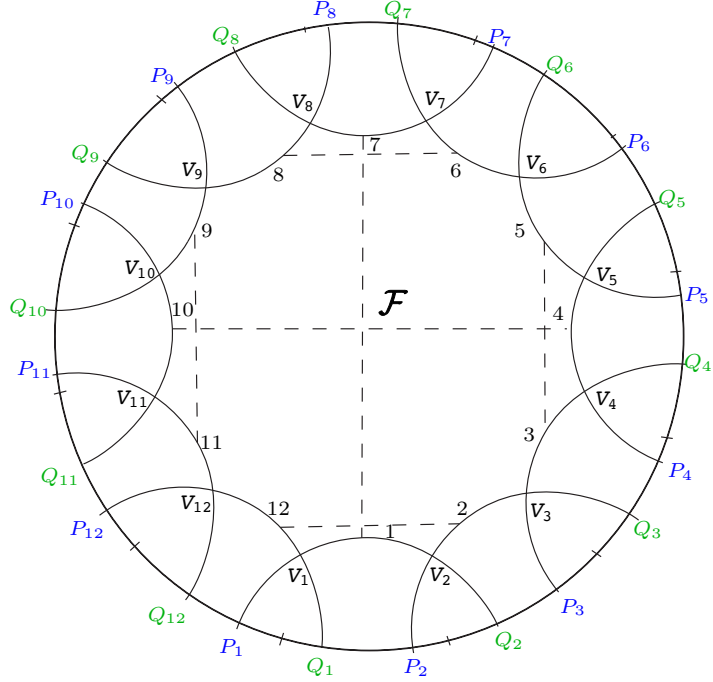}
\caption{The fundamental domain $\mathcal F$ for a genus $2$ surface}
 \label{fund}
\end{figure}

Using notations similar to \cite{AF3}, we label the sides of $\F$ in a counterclockwise order by numbers $1\leq i\leq 8g-4$, as they are arcs of the corresponding isometric circles of generators $T_i$. We denote the corresponding vertices of $\mathcal F$ by $V_i$, so that the side $i$ connects the vertices $V_i$ and $V_{i+1} \pmod {8g-4}$.
The identification of the sides is given by the pairing rule:
\[
\sigma(i)=\begin{cases}
4g-i \mod (8g-4) \text{ for odd } i \\

2-i \mod (8g-4) \text{ for even } i
\end{cases}\,.
\]

The generators $T_i$ associated to this fundamental domain {are M\"obius transformations satisfying} the following properties:
\begin{eqnarray}
& &T_{\sigma(i)}T_i=Id\label{r11}\\
& &T_i(V_i)=V_{\rho(i)}, \text{ where } \rho(i)=\sigma(i)+1\label{r12}\\
& &T_{\rho^3(i)}T_{\rho^2(i)}T_{\rho(i)}T_i=Id\label{r13}
\end{eqnarray}

We denote by $P_iQ_{i+1}$ the oriented (infinite) geodesic that extends the side $i$
to the boundary of the fundamental domain $\mathcal F$. {It is important to remark that 
${P_iQ_{i+1}}$ is the isometric circle for $T_i$, and $T_i\left({P_iQ_{i+1}}\right)={Q_{\sigma(i)+1}P_{\sigma(i)}}$ is the isometric circle for $T_{\sigma(i)}$ so that the inside of the former isometric circle is mapped to the outside of the latter.}

The counter-clockwise order of theses points on $\Sb$ is 
\begin{equation}\label{eq:BS}
P_1,Q_1, P_{2}, Q_2,\dots, P_{8g-4},Q_{8g-4},P_1.
\end{equation}

Bowen and Series \cite{BoS} defined the boundary map $f_{\bar P}:\Sb\ra \Sb$
\begin{equation}
f_{\bar P}(x)=T_i(x) \quad \text{if } P_i\le x<P_{i+1}\,.
\end{equation}
with the set of jumps $J=\bar{P}=\{P_1,\dots,P_{8g-4}\}$. They
showed that such a map is Markov with respect to the partition (\ref{eq:BS}),
expanding, and satisfies 
R\'enyi's distortion estimates, hence it admits a unique finite invariant ergodic measure equivalent to Lebesgue measure. 

Adler and Flatto \cite{AF3} proved the existence of an invariant domain  for the corresponding natural extension map $F_{\bar P}$, $\Omega_{\bar P}\subset \Sb\times \Sb$.
 Moreover, the set $\Omega_{\bar P}$ they identified has a regular geometric structure, what we call \emph{finite rectangular} (see Figure \ref{fig:BS}, with $\Omega_{\bar P}$ shown as a subset of $[-\pi,\pi]^2$). The maps $F_{\bar P}$ and $f_{\bar P}$ are ergodic\footnote{More precisely, $F_{\bar P}$  is a $K$-automorphism, property that is equivalent to $f_{\bar P}$ being an exact endo\-morphism.}. %
 Both Series \cite{S2} and Adler-Flatto \cite{AF3} explain how the boundary map can be used for coding symbolically the geodesic flow on $\mathbb D/\G$.

\begin{figure}[htb]
\includegraphics[scale=0.85 ]{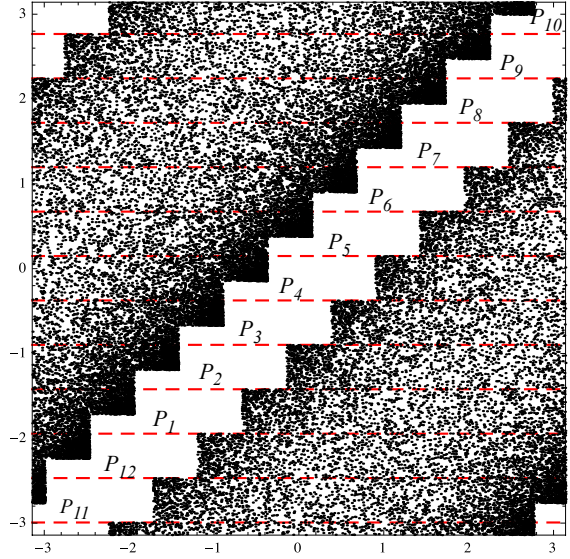}
\caption{Domain of the Bowen-Series map $F_{\bar P}$ as a subset of $[-\pi,\pi]^2$}
\label{fig:BS}
\end{figure}
\bigskip
\noindent{\bf Notations.} For $A,B\in \Sb$, the various intervals on $\Sb$ between $A$ and $B$ (with the counterclockwise order) will be denoted by $[A,B], (A,B], [A,B)$ and $(A,B)$. The geodesic (segment) from a point $C\in\Sb$ (or $\D$) to $D\in\Sb$ (or $\D)$ will be denoted by $CD$.

\medskip
Our object of study is a generalization of the Bowen-Series boundary map. We 
{consider an open set of jumps
\[
J=\bar A=\{A_1,\dots,A_{8g-4}\}
\]
with the only condition $A_i\in (P_i,Q_i)$,}
and define $f_{\bar A}:\Sb\ra \Sb$ by
\begin{equation} 
f_{\bar A}(x)=T_i(x)\quad  \text{if } A_i\le x<A_{i+1}\,,
\end{equation}
and the corresponding two-dimensional map:
\begin{equation}
F_{\bar A}(x,y)=(T_i(x),T_i(y)) \quad \text{if } A_i\le y<A_{i+1}\,.
\end{equation}

A key ingredient {in} analyzing map $F_{\bar A}$ is what we call the \textit{cycle property} of the partition points  $\{A_1,\dots,A_{8g-4}\}$. Such a property refers to the structure of the orbits of each $A_i$  that one can construct by tracking the two images $T_iA_i$ and $T_{i-1}A_i$ of these points of discontinuity of the map $f_{\bar A}$. It happens that some forward iterates of these two images  $T_iA_i$ and $T_{i-1}A_i$ under $f_{\bar A}$ coincide. This is another property from Zagier's list of ``good" reduction algorithms.

We state the cycle property result below and provide a proof in Section \ref{s:cycle}. 

\begin{thm}[Cycle Property]\label{cycle}
Each partition point $A_i\in (P_i,Q_i)$, $1\le i\le 8g-4$,  satisfies the cycle property, i.e., there exist positive integers $m_i, k_i$ such that
\[
f_{\bar A}^{m_i}(T_iA_i)=f_{\bar A}^{k_i}(T_{i-1}A_i).\]
\end{thm}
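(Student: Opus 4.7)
The two images of $A_i$ at its discontinuity are $T_iA_i$ (right limit) and $T_{i-1}A_i$ (left limit), since $f_{\bar A}$ switches from $T_{i-1}$ to $T_i$ at $A_i$. The plan is to track each of these orbits under $f_{\bar A}$ for a bounded number of steps and show that the resulting compositions, when applied to $A_i$, coincide by virtue of the length-$4$ vertex relation $T_{\rho^3(i)}T_{\rho^2(i)}T_{\rho(i)}T_i=\Id$ from (\ref{r13}), combined with the side pairing (\ref{r11}).

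First I would localize $T_iA_i$ and $T_{i-1}A_i$ within the partition $\{[A_j,A_{j+1})\}$. Since $T_i$ sends its isometric circle $P_iQ_{i+1}$ to the isometric circle $Q_{\rho(i)}P_{\sigma(i)}$ of $T_{\sigma(i)}$ (and analogously for $T_{i-1}$), the image of $A_i\in(P_i,Q_i)$ lies on a specific arc determined by the pairing $\sigma$ and the cyclic order (\ref{eq:BS}). From this one reads off indices $j_1,j_2$ with $T_iA_i\in[A_{j_1},A_{j_1+1})$ and $T_{i-1}A_i\in[A_{j_2},A_{j_2+1})$, and hence identifies the generator applied by $f_{\bar A}$ at the next step on each side.

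Second, I would iterate $f_{\bar A}$ on each image; at each step a new generator is prepended to the accumulated group element. My expectation is that the two words assembled along the orbits form complementary prefixes of the 4-cycle vertex relation (\ref{r13}), so that after applying (\ref{r11}) to the appropriate factors, the two accumulated elements of $\G$ coincide as Möbius transformations. Evaluating at $A_i$ then yields the claimed equality $f_{\bar A}^{m_i}(T_iA_i)=f_{\bar A}^{k_i}(T_{i-1}A_i)$ for small positive integers $m_i,k_i$, with $m_i+k_i$ bounded by the length of the vertex relation.

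The main obstacle is the iteration step: predicting the correct generator at each iterate requires that every intermediate image lie \emph{strictly} inside the anticipated cell $[A_j,A_{j+1})$ and not in a neighboring cell. Because each $A_j$ can be any point of the open arc $(P_j,Q_j)$ — rather than the Bowen-Series point $P_j$ — this cannot be imported directly from the Bowen-Series analysis. One has to use the geometric extension condition of Bowen-Series together with the precise positions of the image points $T_iP_i,T_iQ_i$ relative to the sides of $\F$ as described by Adler-Flatto \cite{AF3}, in order to verify that the algebraic cancellation forced by (\ref{r13}) is actually realized dynamically by $f_{\bar A}$ uniformly across the whole open family of admissible partitions $\bar A$. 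This geometric bookkeeping is the technical heart of the argument.
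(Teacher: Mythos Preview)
Your instinct that the vertex relation \eqref{r13} is the algebraic engine behind the cycle property is correct: the paper derives from it the identity $T_{\rho(i)}T_i=T_{\theta(i-1)}T_{i-1}$ (with $\rho(i)=\sigma(i)+1$, $\theta(i)=\sigma(i)-1$), and when $T_iA_i\in(Q_{\rho(i)},A_{\rho(i)+1})$ and $T_{i-1}A_i\in(A_{\theta(i-1)},P_{\theta(i-1)+1})$ this gives the cycle immediately with $m_i=k_i=1$. Your localization step and your diagnosis of the obstacle are also on target.

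The genuine gap is your expectation that $m_i+k_i$ is bounded by the length of the vertex relation and that the proof reduces to a finite geometric bookkeeping. This fails precisely in the case you flag as problematic: when $A_i$ lies close to $Q_i$ one has $T_iA_i\in[A_{\rho(i)+1},Q_{\rho(i)+1})$, so $f_{\bar A}$ applies $T_{\rho(i)+1}$ rather than $T_{\rho(i)}$, and the two words are \emph{not} complementary prefixes of a single length-$4$ relation. The cycle can then take many more steps to close, and no static Adler--Flatto position data will force it. The paper handles this by (a) proving the two bad localizations cannot occur simultaneously (a midpoint estimate, their Lemma~\ref{lem:imp}); (b) showing via expansion that in the bad case the iterates follow a rigid alternating pattern governed by $\psi_n(i)=(\theta\circ\rho)^n(i)=i+n(4g-4)\pmod{8g-4}$; and (c) observing that because $\gcd(g-1,2g-1)=1$, some $\psi_n(i)$ lands in $\{2,2g+1,4g,6g-1\}$, where $Q_{\psi_n(i)}$ is a fixed point of $T_{\psi_n(i)}$ and expansion forces the orbit into the ``good'' cell, closing the cycle. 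So the argument is dynamical (expansion plus a periodic-point mechanism) rather than a bounded combinatorial check; your outline would need these ingredients to go through.
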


If a cycle closes up after one iteration
\begin{equation}\label{eq:sc}
f_{\bar A}(T_iA_i)=f_{\bar A}(T_{i-1}A_i),
\end{equation}
we say that the point $A_i$ satisfies the \emph{short cycle property}. 
Under this condition, we prove the following:
\begin{thm}[Main Result]\label{main}
If each partition point $A_i$ satisfies the short cycle property \eqref{eq:sc}, then
there exists a set $\Omega_{\bar A}\subset \Sb\times \Sb$ with the following properties:
\begin{enumerate}\item $\Omega_{\bar A}$ has a finite rectangular structure, and $F_{\bar A}$ is (essentially) bijective on $\Omega_{\bar A}$.
\item Almost every point $(x,y)\in \Sb\times \Sb\setminus \Delta$ is mapped to $\Omega_{\bar A}$ after finitely many iterations of $F_{\bar A}$, and $\Omega_{A}$ is a global attractor for the map $F_{\bar A}$, i.e., \[\Omega_{A}=\bigcap_{n=0}^\infty F_{\bar A}^n(\Sb\times \Sb\setminus \Delta).\]
\end{enumerate}
\end{thm}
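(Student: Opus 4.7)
The plan is to construct $\Omega_{\bar A}$ explicitly as a finite union of axis-parallel rectangles in $\Sb\times\Sb$ and then verify its two claimed properties separately. Let $C_i := f_{\bar A}(T_iA_i) = f_{\bar A}(T_{i-1}A_i)$ denote the common image provided by the short cycle relation \eqref{eq:sc}. For each $i$, the natural candidate for the horizontal slice of $\Omega_{\bar A}$ at $y\in[A_i,A_{i+1})$ is an arc on $\Sb$ whose endpoints $u_i,v_i$ are drawn from the finite set consisting of the points $A_j$, $T_jA_j$, $T_{j-1}A_j$ together with the $C_j$; the short cycle hypothesis ensures that this collection is closed under the forward M\"obius iteration that updates the $x$-coordinate at each jump. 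Setting
\[
\Omega_{\bar A} \;=\; \bigcup_{i=1}^{8g-4}\; [u_i, v_i]\times[A_i, A_{i+1}],
\]
one obtains the finite rectangular structure required by part (1) by construction.

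For essential bijectivity of $F_{\bar A}|_{\Omega_{\bar A}}$ I would check, strip by strip, that the M\"obius image $T_i([u_i,v_i])$ decomposes as the union of those $[u_j, v_j]$ corresponding to $y$-levels in $T_i([A_i,A_{i+1}))$. The key geometric input is the short cycle property combined with the pairing rules \eqref{r11}--\eqref{r13}: they force the right edge of the image of the $i$-th strip and the left edge of the image of the $(i-1)$-th strip to meet precisely at $y=C_i$, so the images tile $\Omega_{\bar A}$ with no overlap or gap. Injectivity on each strip is automatic since M\"obius transformations are injective, and the edge-matching above assembles the strip-wise bijections into a global bijection (up to a Lebesgue-null boundary).

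For part (2), I would use hyperbolic expansion of $f_{\bar A}$, noting that the same R\'enyi-type distortion estimates as in Bowen--Series \cite{BoS} apply: the extension condition for the sides of $\F$ persists after perturbing the discontinuities from $P_i$ to $A_i\in(P_i,Q_i)$. Uniform expansion in the $y$-direction then forces the $y$-orbit of almost every point to visit each partition interval, and at each such visit the inverse-branch contraction in the $x$-direction places $x$ inside the correct slice $[u_{i(y)},v_{i(y)}]$, so $(x,y)$ enters $\Omega_{\bar A}$ after finitely many steps; the excluded set is the countable preimage of the jumps $\{A_j\}$, of Lebesgue measure zero, and the global attractor formula \eqref{att} follows from bijectivity together with the shrinking of $F_{\bar A}^n(\Sb\times\Sb\setminus\Delta)$ onto $\Omega_{\bar A}$. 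The main obstacle is the explicit combinatorial identification of $u_i, v_i$ and the verification that the resulting tiling is correctly permuted by $F_{\bar A}$; this is a finite but intricate case analysis guided by $\sigma$ and $\rho$ from \eqref{r11}--\eqref{r13}, and is where the short cycle hypothesis is indispensable, as longer cycles would generate a correspondingly finer boundary structure that may not close up in a finite number of steps.
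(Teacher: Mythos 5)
Your proposal leaves the two load-bearing parts of the theorem unproved, and the guesses it does commit to are wrong in ways that would derail the argument. First, the shape of $\Omega_{\bar A}$: you take one rectangle $[u_i,v_i]\times[A_i,A_{i+1}]$ per strip, with $u_i,v_i$ drawn from the points $A_j$, $T_jA_j$, $T_{j-1}A_j$ and the cycle ends $f_{\bar A}(T_iA_i)$. In fact the $x$-coordinates of the boundary of the attractor are the endpoints $P_j,Q_j$ of the isometric circles (absent from your list), and each strip $y\in[A_i,A_{i+1}]$ is a union of \emph{three} rectangles with corner points $(P_i,B_i)$ and $(Q_{i+1},C_i)$, where $B_i=T_{\sigma(i-1)}A_{\sigma(i-1)}$ and $C_i=T_{\sigma(i+1)}A_{\sigma(i+1)+1}$ are the \emph{one-step} images of partition points; the ends of the cycles never appear as $y$-levels of the boundary. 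A region with a single rectangle per strip cannot be essentially bijective: the verification of bijectivity rests precisely on splitting each strip into these three pieces and matching the image of the upper corner of strip $i$ with the image of the lower corner of strip $j$ (with $\sigma(j+1)=\sigma(i-1)-1$) via the short cycle identity $T_iB_i=T_jC_j$. Your ``edge-matching at $y=C_i$'' gestures at this but the matching happens in the $x$-direction between corners of different strips, not between the right and left edges of images of consecutive strips.

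Second, the attraction argument as you describe it would fail. There is no ``inverse-branch contraction in the $x$-direction'': $F_{\bar A}$ applies the \emph{same} M\"obius map $T_i$ (chosen by $y$) to both coordinates, so knowing that the $y$-orbit equidistributes over the partition intervals says nothing about where $x$ lands. The actual proof needs three separate ingredients: (i) expansion inside isometric circles forces $x_n$ and $y_n$ to eventually lie in different isometric circles; (ii) a finite case analysis of the rectangles adjacent to (an enlargement $\Psi_{\bar A}$ of) $\Omega_{\bar A}$, ruling out indefinite recurrence among ``bad'' rectangles by quantitative geometry of the right-angled $(8g-4)$-gon — specifically that $T_k$ maps certain arcs $[P_{j},Q_{j}]$ to less than half of the target arc, which is what guarantees the second iterate falls into the region; and (iii) tracking the residual set $\mathcal D=\bigcup_i[P_{i-1},P_i]\times[Q_i,B_i]$, whose iterates shrink geometrically in the $x$-direction onto the segments $\{P_{k-1}\}\times[Q_k,B_k]$. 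Consequently the exceptional set for part (2) is the union of the horizontal segments $[P_{i-1},P_i]\times\{Q_i\}$ and their preimages, not the preimages of the jumps $\{A_j\}$ as you assert. None of (i)--(iii) is present in your sketch, and you explicitly defer the combinatorial identification of the region and the tiling check, which is where the entire content of the theorem lies.
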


\begin{figure}[htb]
\includegraphics[scale=1]{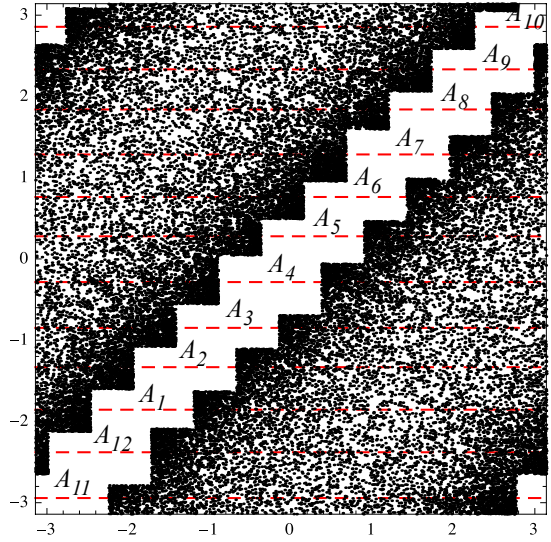}
\caption{Domain (and attractor) of the generalized Bowen-Series map $F_{\bar A}$}
\end{figure}

{Notice that the set of partitions satisfying the short cycle property contains an open set with this property, as explained in Remark \ref{aibi}. Thus we prove the Reduction Theory Conjecture. We believe that this result is true in greater generality, i.e., for all partitions $\bar A=\{A_i\}$ with $A_i\in (P_i,Q_i)$.}

\subsection*{Organization of the paper}

In Section \ref{s:BS} we prove properties (1) and (2) of the Reduction Theory Conjecture
for the classical Bowen-Series case when the partition points are given by the set $\bar P=\{P_i\}$. 
In Section \ref{s:cycle} we prove the cycle property for any partition $\bar A=\{A_i\}$ with $A_i\in (P_i,Q_i)$.
 In Section \ref{s:bijectivity} we determine the structure of the set $\Omega_{\bar A}$
in the case when the partition $\bar A$ satisfies the short cycle property and prove the bijectivity of the map $F_{\bar A}$ on $\Omega_{\bar A}$.  In Section \ref{s:trapping} we identify the trapping region for the map $F_{\bar A}$
and prove that every point in $\Sb\times \Sb\setminus \Delta$ is mapped to it after finitely many iterations of the map $F_{\bar A}$. And finally, in Section \ref{s:reduction} we prove that {almost} every point $\Sb\times \Sb\setminus \Delta$ is mapped to $\Omega_{\bar A}$ after finitely many iterations of the map $F_{\bar A}$ and complete the proof of Theorem~\ref{main}. In Section \ref{s:entropy} we apply our results to calculate the invariant probability measures for the maps $F_{\bar A}$ and $f_{\bar A}$.

\section{Bowen-Series case}\label{s:BS}
In this section we prove  properties (1) and (2) of  the Reduction Theory Conjecture  for the Bowen-Series classical case, where the partition $\bar A$ is given by  the set of points $\bar P=\{P_1,\dots,P_{8g-4}\}$. 

\begin{thm}\label{thm:BS}
The two-dimensional Bowen-Series map $F_{\bar P}$ satisfies properties {(1) and (2) of the Reduction Theory Conjecture.}
\end{thm}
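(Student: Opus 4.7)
Since Adler--Flatto \cite{AF3} have already identified an invariant set $\Omega_{\bar P}\subset\Sb\times\Sb$ on which $F_{\bar P}$ is (essentially) bijective, the plan is to (i) verify that this $\Omega_{\bar P}$ has \emph{finite rectangular structure}, and (ii) prove that \emph{every} point of $\Sb\times\Sb\setminus\Delta$ reaches $\Omega_{\bar P}$ after finitely many iterations of $F_{\bar P}$.

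For (i), I would unpack the Adler--Flatto description and write
$$\Omega_{\bar P}=\bigcup_{i=1}^{8g-4} H_i,\qquad H_i=\{(x,y):y\in[P_i,P_{i+1}),\ x\in X_i\},$$
where each $X_i\subset\Sb$ is a union of arcs whose endpoints lie in the finite set $\bar P\cup\bar Q$. The precise determination of $X_i$ uses the image data $T_i(P_i)=Q_{\sigma(i)+1}$ and $T_i(Q_{i+1})=P_{\sigma(i)}$ recorded in the paragraph before \eqref{eq:BS}, together with the pairing rule $\sigma$. From this explicit description, the lower and upper boundaries of $\Omega_{\bar P}$, viewed with $y$ as the independent variable, are non-decreasing step-functions with at most $8g-4$ jumps each, giving the required finite rectangular structure.

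For (ii), the argument proceeds in two parts. First, invariance $F_{\bar P}(\Omega_{\bar P})=\Omega_{\bar P}$ is already known from \cite{AF3}, so it suffices to track the orbit of a point $(x,y)$ lying in the complement. That complement decomposes into finitely many connected ``non-reduced'' regions, each characterized by the combinatorial position of $x$ and $y$ relative to the partition points $\bar P\cup\bar Q$. On each such region I would follow the orbit of $(x,y)$ under $F_{\bar P}$: the $y$-coordinate evolves under the expanding Markov map $f_{\bar P}$ of Bowen--Series \cite{BoS} and quickly visits every atom $[P_i,P_{i+1})$, while each $T_i$ is a strict contraction on the complement of its isometric disc, so the $x$-coordinate is pulled toward the image arc of the $y$-coordinate. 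The combinatorial type of $(x,y)$ changes in a controlled fashion governed by the pairing rule $\sigma$ and the vertex-cycle relation \eqref{r13}, and a finite case analysis shows that after a bounded number of iterations all non-reduced types are exhausted, forcing $(x,y)\in\Omega_{\bar P}$.

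The main obstacle is the finite but intricate bookkeeping in step (i): one has to write down each $X_i$ explicitly and verify, using \eqref{r11}--\eqref{r13}, that the $F_{\bar P}$-images of the horizontal strips $H_i$ reassemble into the vertical strips whose union tiles $\Omega_{\bar P}$. Once the attractor is described concretely, step (ii) reduces to the combinatorial case analysis sketched above, and the almost-everywhere argument is in fact strengthened to every point because both the combinatorial classification and the expansion/contraction estimates are uniform.
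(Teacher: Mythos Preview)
Your outline matches the paper's strategy closely: describe $\Omega_{\bar P}$ explicitly as a union of horizontal strips with corners in $\bar P\cup\bar Q$, verify bijectivity strip-by-strip, and then chase the complementary regions by a case analysis driven by the expansion of $f_{\bar P}$. So the overall architecture is fine.

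The gap is in step~(ii). Your description (``the $x$-coordinate is pulled toward the image arc \dots a finite case analysis shows that after a bounded number of iterations all non-reduced types are exhausted'') hides the only genuinely hard point. When one actually carries out the case analysis, the non-reduced regions --- in the paper's notation, sets of the form $[P_k,Q_k]\times[Q_k,Q_{k+1}]$ --- map into one another, and neither expansion of $f_{\bar P}$ on the $y$-coordinate nor contraction of $T_i$ outside its isometric circle is by itself enough to force termination. What closes the loop is a specific quantitative estimate: the arc $T_{j+1}([P_j,Q_j])$ occupies strictly less than \emph{half} of $[P_{\rho(j+1)},Q_{\rho(j+1)}]$. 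This half-length bound (proved in the Appendix by an explicit trigonometric computation on the regular $(8g-4)$-gon) is what guarantees that after a type-b step followed by a type-a step the pair lands inside $\Omega_{\bar P}$. Without it, the combinatorics of $\sigma$ and the vertex relation~\eqref{r13} only tell you which non-reduced region you move to next, not that you ever leave; the orbit could in principle cycle through such regions indefinitely.

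So to turn your plan into a proof you must isolate and prove this geometric lemma (or an equivalent one), and then structure the case analysis around it rather than around a generic expansion/contraction dichotomy.
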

Before we prove this theorem, we state a useful proposition that can be easily derived using the isometric circles and the conformal property of M\"obius transformations  (see also Theorem 3.4 of \cite{AF3}).
\begin{prop}\label{rem1} $T_i$ maps the points $P_{i-1}$, $P_i$, $Q_{i}$, $P_{i+1}$, $Q_{i+1}$, $Q_{i+2}$ respectively to $P_{\sigma(i)+1}$, $Q_{\sigma(i)+1}$, $Q_{\sigma(i)+2}$, $P_{\sigma(i)-1}$, $P_{\sigma(i)}$, $Q_{\sigma(i)}$. 
\end{prop}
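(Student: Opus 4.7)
The plan is to use four structural facts: (i) $T_i$ is an orientation-preserving M\"obius transformation of $\D$, hence preserves the counter-clockwise cyclic order on $\Sb$; (ii) the isometric circle of $T_i$ is the geodesic $P_iQ_{i+1}$, and by (\ref{r11}) its inverse $T_{\sigma(i)}$ has isometric circle $P_{\sigma(i)}Q_{\sigma(i)+1}$, so $T_i$ maps one onto the other; (iii) by (\ref{r12}), $T_i(V_i)=V_{\sigma(i)+1}$; (iv) all angles of $\mathcal F$ are $\pi/2$, so the cycle relation (\ref{r13}) implies that the geodesic extending side $i-1$ (which meets the geodesic of side $i$ orthogonally at $V_i$) is carried by $T_i$ to the geodesic meeting the image geodesic orthogonally at $V_{\sigma(i)+1}$, i.e., the geodesic extending side $\sigma(i)+1$.

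I would first settle $T_i(P_i)$ and $T_i(Q_{i+1})$. By (ii) these two points are $\{P_{\sigma(i)},Q_{\sigma(i)+1}\}$ in some order. To pin it down, note from (\ref{eq:BS}) that on the geodesic $P_iQ_{i+1}$ the vertex $V_i$ lies on the $P_i$-side (since $V_i$ also sits on the geodesic $P_{i-1}Q_i$, whose endpoint $Q_i$ is cyclically adjacent to $P_i$), and that $V_{\sigma(i)+1}$ lies on the image geodesic at its $Q_{\sigma(i)+1}$-end (by the analogous inspection at $\sigma(i)+1$). Combined with (iii) and (i), this forces $T_i(P_i)=Q_{\sigma(i)+1}$ and $T_i(Q_{i+1})=P_{\sigma(i)}$. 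As a corollary, $T_i(V_{i+1})=V_{\sigma(i)}$, since $V_{i+1}$ is the $Q_{i+1}$-end of side $i$'s geodesic and $V_{\sigma(i)}$ is the $P_{\sigma(i)}$-end of its image.

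For the other four points I would apply (iv) twice. The geodesic $P_{i-1}Q_i$ passes through $V_i$ and therefore maps under $T_i$ onto the geodesic $P_{\sigma(i)+1}Q_{\sigma(i)+2}$ through $V_{\sigma(i)+1}$; cyclic-order preservation forces $T_i(P_{i-1})=P_{\sigma(i)+1}$ and $T_i(Q_i)=Q_{\sigma(i)+2}$. Analogously, the geodesic $P_{i+1}Q_{i+2}$ passes through $V_{i+1}$, so $T_i$ sends it to the geodesic $P_{\sigma(i)-1}Q_{\sigma(i)}$ through $V_{\sigma(i)}$, yielding $T_i(P_{i+1})=P_{\sigma(i)-1}$ and $T_i(Q_{i+2})=Q_{\sigma(i)}$. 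A final consistency check is that the six image points appear in the correct cyclic order on $\Sb$, which is immediate from (\ref{eq:BS}).

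The main obstacle is purely bookkeeping: at each vertex one must correctly identify the ``$P$-end'' and the ``$Q$-end'' of each incident extended side, using only the adjacency data in (\ref{eq:BS}) together with the pairing rule for $\sigma$. Once this is settled, no matrix computation with the $T_i$ is needed, and the six formulas drop out from facts (i)--(iv).
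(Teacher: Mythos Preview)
Your proposal is correct and is precisely the approach the paper indicates: the paper does not give a detailed proof but states that the proposition ``can be easily derived using the isometric circles and the conformal property of M\"obius transformations,'' referring to \cite{AF3}, and your argument is a careful unpacking of exactly that. One small remark: in your point (iv) the invocation of the cycle relation (\ref{r13}) is superfluous---the identification of the image of the adjacent side's geodesic follows already from conformality of $T_i$ and the fact that at each vertex the two incident sides meet at a right angle, so the perpendicular through $V_i$ must go to the perpendicular through $V_{\sigma(i)+1}$.
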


\begin{proof}[Proof of Theorem \ref{thm:BS}]
In this case the set $\Omega_{\bar P}$ is determined by the  corner points located in each horizontal strip 
\[
\{(x,y)\in\Sb\times\Sb \mid y\in[P_i,P_{i+1})\}
\]
(see Figure \ref{fig:stripBS}) with coordinates  
$$(P_{i},Q_i) \text{ (upper part)}  \; \text{ and } \;  (Q_{i+2},P_{i+1}) \text{ (lower part)}.$$ 

\begin{figure}[htb]
\includegraphics[scale=1.1]{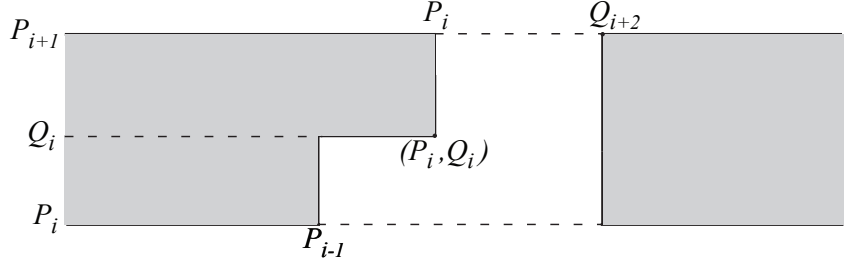}
\caption{Strip $y\in [P_i,P_{i+1}]$ of $\Omega_{\bar P}$}
\label{fig:stripBS}
\end{figure}

This set  {obviously has} a finite rectangular structure. One can also verify immediately the essential bijectivity, by investigating how different regions of $\Omega_{\bar P}$ are mapped by $F_{\bar P}$. More precisely we look at the strip $S_i$  of $\Omega_{\bar P}$ given by $y\in [P_i,P_{i+1}]$, and its image under $F_{\bar P}$, in this case $T_i$.

We consider the following decomposition of this strip: $\tilde S_i=[Q_{i+2},P_{i-1}]\times [P_i,Q_{i}]$ (red rectangular horizontal piece),
$\hat S_i=[Q_{i+2},P_i]\times [Q_i,P_{i+1}]$ (green horizontal rectangular piece). Now
\begin{align*}
T_i(\tilde S_i)& = [T_iQ_{i+2},T_iP_{i-1}]\times [T_iP_i,T_iQ_i]=[Q_{\sigma(i)},P_{\sigma(i)+1}]\times [Q_{\sigma(i)+1},Q_{\sigma(i)+2}]\\
T_i(\hat S_i) & = [T_iQ_{i+2},T_iP_{i}]\times [T_iQ_i,T_iP_{i+1}]=[Q_{\sigma(i)},Q_{\sigma(i)+1}]\times [Q_{\sigma(i)+2},P_{\sigma(i)-1}]
\end{align*}
Therefore $T_i(S_i)$ is a complete vertical strip in $\Omega_{\bar P}$, with $Q_{\sigma(i)}\le x\le Q_{\sigma(i)+1}$. {This completes the proof of the property (1).}

\begin{figure}[htb]
\includegraphics[scale=0.9]{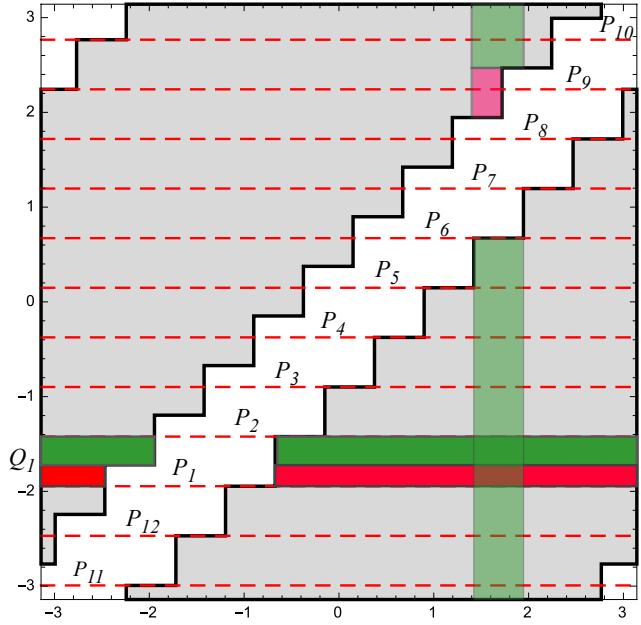}
\caption{Bijectivity of the Bowen-Series map $F_{\bar P}$}
\label{fig:BSbij}
\end{figure}

\smallskip

We now prove property (2) for the set $\Omega_P$. 

Consider $(x,y)\in \Sb\times \Sb\setminus \Delta$. Notice that there exists $n(x,y)>0$ such that the two values $x_n,y_n$ obtained from the $n$th iterate of $F_{\bar P}$,  $(x_n,y_n)=F^n_{\bar P}(x,y)$, are not inside the same isometric circle; in other words, $(x_n,y_n)\not\in X_i=[P_{i},Q_{i+1}]\times [P_{i},P_{i+1})$ for all  $1\le i\le 8g-4$. Indeed, if one assumes that both coordinates $(x_n,y_n)=F^n_{\bar P}(x,y)$ belong to such a  set $X_i$ for all $n\ge 0$, each time we iterate the pair $(x_n,y_n)$ we apply one of the maps $T_{i}$ which is expanding in the interior of  its isometric circle.  Thus the distance between $x_n$ and $y_n$ would grow sufficiently for the points to be inside different isometric circles. Therefore, there exists $n>0$ such that $y_n$ is in some
interval $[P_{i},P_{i+1}) \subset [P_{i},Q_{i+1}]$ and $x_n\not \in [P_{i},Q_{i+1}]$.

Notice that, from the definition of $\Omega_{\bar P}$, in order to prove the attracting property, we need to analyze the situations $(x_n,y_n)\in [P_{i-1},P_{i}]\times [P_{i},Q_i]$ and $(x_n,y_n)\in [Q_{i+1},Q_{i+2}]\times[P_i,P_{i+1})$ and show that a forward iterate lands in $\Omega_{\bar P}$.

\medskip

\noindent \textbf{Case I.}  If  $(x_n,y_n)\in [Q_{i+1},Q_{i+2}]\times[P_i,P_{i+1})$, then
$$F_{\bar P}(x_n,y_n)\in [T_iQ_{i+1},T_iQ_{i+2}]\times [T_iP_i,T_iP_{i+1})=[P_{\sigma(i)},Q_{\sigma(i)}]\times[Q_{\sigma (i)+1},P_{\sigma(i)-1}).$$
The subset $[P_{\sigma(i)},Q_{\sigma(i)}]\times [Q_{\sigma(i)+1},P_{\sigma(i)-2}]$ is included in $\Omega_{\bar P}$ so we only need to analyze the situation $(x_{n+1},y_{n+1})\in [P_{k+2},Q_{k+2}]\times [P_k,P_{k+1})$, where $k=\sigma(i)-2$.
 Then
$$(x_{n+2},y_{n+2})=F^2_P(x_n,y_n)=T_kT_i(x_n,y_n)\in [T_kP_{k+2},Q_{\sigma(k)}]\times[Q_{\sigma(k)+1},P_{\sigma(k)-1})\,.$$
Notice that $T_kP_{k+2}\in [P_{\sigma(k)},Q_{\sigma(k)}]$. The subset $[T_kP_{k+2},Q_{\sigma(k)}]\times[Q_{\sigma(k)+1},P_{\sigma(k)-2}]$  is included in $\Omega_{\bar P}$ so we only need to analyze the situation \[(x_{n+2},y_{n+2})\in [T_kP_{k+2},Q_{\sigma(k)}]\times [P_{\sigma(k)-2},P_{\sigma(k)-1})\subset [P_{\sigma(k)},Q_{\sigma(k)}]\times[P_{\sigma(k)-2},P_{\sigma(k)-1}).\]

Notice that $\sigma(k)-2=\sigma(\sigma(i)-2)-2=i$ (direct verification), so we are back to analyzing the situation $(x_{n+2},y_{n+2})\in [P_{i+2},Q_{i+2}]\times[P_i,P_{i+1})$. The boundary map $f_{\bar P}$ is expanding, so it is not possible for the images of the interval $(y_n,P_{i+1})$ (on the $y$-axis) to  alternate indefinitely between the intervals $[P_i,P_{i+1}]$ and $[P_{\sigma(i)-2},P_{\sigma(i)-1}]$, where $T_iP_{i+1}=P_{\sigma(i)-1}$. 

This means that either some even iterate  \[F^{2m}(x_n,y_n)\in [P_{i+2},Q_{i+2}]\times[Q_{i+3},P_{i})\subset \Omega_{\bar P}\] or some odd iterate \[F^{2m+1}(x_n,y_n)\in  [P_{\sigma(i)},Q_{\sigma(i)}]\times[Q_{\sigma(i)+1},P_{\sigma(i)-2}]\subset \Omega_{\bar P}.\]

\medskip

\noindent \textbf{Case II.}  If $(x_n,y_n)\in [P_{i-1},P_{i}]\times [P_{i},Q_i]$, then
\[F_{\bar P}(x_n,y_n)\in [T_iP_{i-1},T_iP_i]\times [T_iP_i,T_iQ_i]=[P_{\sigma(i)+1},Q_{\sigma(i)+1}]\times[Q_{\sigma (i)+1},Q_{\sigma(i)+2}]\,.\]
There are two subcases that we need to analyze: \[(a) \;\;(x_{n+1},y_{n+1})\in [P_k,Q_k]\times[Q_k,P_{k+1}) \quad (b) \;\; (x_{n+1},y_{n+1})\in [P_k,Q_k]\times[P_{k+1},Q_{k+1}],\] where $k=\sigma(i)+1$.

\medskip

\noindent \textbf{Case (a)} If $(x_{n+1},y_{n+1})\in [P_k,Q_k]\times[Q_k,P_{k+1}]$, then
$$(x_{n+2},y_{n+2})\in T_k\left(  [P_k,Q_k]\times[Q_k,P_{k+1})\right)=[Q_{\sigma(k)+1},Q_{\sigma(k)+2}]\times[Q_{\sigma (k)+2},P_{\sigma(k)-1}]\,.$$
Notice that $\sigma(k)+1=\sigma(\sigma(i)+1)+1=4g+i-2$ (direct verification), so when analyzing the situation $(x_{n+2},y_{n+2})\in [Q_{4g+i-2},Q_{4g+i-1}]\times[Q_{4g+i-1},P_{4g+i-4})$  the only problematic region is $(x_{n+2},y_{n+2})\in [P_{4g+i-1},Q_{4g+i-1}]\times [Q_{4g+i-1},Q_{4g+i}]$.

\smallskip

\noindent \textbf{Case (b)} If  $(x_{n+1},y_{n+1})\in [P_k,Q_k]\times[P_{k+1},Q_{k+1}]$, then
\[
\begin{aligned}
(x_{n+2},y_{n+2})&\in T_{k+1}\left ( [P_k,Q_k]\times[P_{k+1},Q_{k+1}]\right)\\
&=[P_{\sigma(k+1)+1},T_{k+1}Q_{k}]\times[Q_{\sigma (k+1)+1},Q_{\sigma(k+1)+2}].
\end{aligned}
\]
Notice that $T_{k+1}Q_k\in [P_{\sigma(k+1)+1},Q_{\sigma(k+1)+1}]$  and $\sigma(k+1)+1=i-1$ (direct verification) so we are left to investigate $(x_{n+2},y_{n+2})\in [P_{i-1},Q_{i-1}]\times [Q_{i-1},Q_{i}]$.

\medskip

To summarize, we started with $(x_{n+1},y_{n+1})\in [P_{\sigma(i)+1},Q_{\sigma(i)+1}]\times[Q_{\sigma (i)+1},Q_{\sigma(i)+2}]\,$ and found two situations that need to be analyzed: $(x_{n+2},y_{n+2})\in [P_{i-1},Q_{i-1}]\times [Q_{i-1},Q_{i}]$ and  $(x_{n+2},y_{n+2})\in [P_{4g+i-1},Q_{4g+i-1}]\times [Q_{4g+i-1},Q_{4g+i}]$.

We prove in what follows that it is not possible for all future iterates $F^m(x_n,y_n)$  to belong to the sets of type $[P_k,Q_k]\times[Q_{k},Q_{k+1}]$. First, it is not possible for all $F^m(x_n,y_n)$ (starting with some $m>0$) to belong only to type-a sets 
$[P_{k_m},Q_{k_m}]\times[Q_{k_m},P_{k_m+1}]$, where the sequence $\{k_m\}$ is defined recursively as $k_{m}=\sigma(k_{m-1})+2$, because such a set is included in the isometric circle $X_{k_m}$, and the argument at the beginning of the proof disallows such a situation.

Also, it is not possible for all $F^m(x_n,y_n)$ (starting with some $m>0$) to belong only to type-b sets $[P_{k_m},Q_{k_m}]\times[P_{k_m+1},Q_{k_m+1}]$, where $k_{m}=\sigma(k_{m-1}+1)+1$: this would imply that the pairs of points $(y_{n+m},Q_{k_{n+m}+1})$ (on the $y$-axis) will belong to the same interval $[P_{k_{n+m}+1},Q_{k_{n+m}+1}]$ which is impossible due to expansiveness property of the map $f_{\bar P}$. Therefore, there exists a pair  $(x_{l},y_{l})$ in the orbit of $F^m(x_n,y_n)$ such that 
$$(x_{l},y_{l})\in  [P_{j},Q_{j}]\times[P_{j+1},Q_{j+1}] \text{ (type-b)}$$ for some $1\le j\le 8g-4$ and $$(x_{l+1},y_{l+1})\in [P_{j'},T_{j+1}Q_{j}]\times[Q_{j'},P_{j'+1}] \subset [P_{j'},Q_{j'}]\times[Q_{j'},P_{j'+1}] \text{ (type-a)},$$ where $j'=\sigma(j+1)+1$. Then 
$$(x_{l+2},y_{l+2})\in T_{j'}([P_{j'},T_{j+1}Q_{j}] \times[Q_{j'},P_{j'+1}])=[Q_{j''},T_{j'}T_{j+1}Q_{j}] \times[Q_{j''+1},P_{j''-2}]$$
where $j''=\sigma(j')+1$. 

Using the results of the Appendix (Corollary \ref{lem:app}), we have that the arc length distance  
$$\ell(P_{j'},T_{j+1}Q_j)=\ell(T_{j+1}P_j,T_{j+1}Q_j)<\frac{1}{2}\ell(P_{j'},Q_{j'}).$$
Now we can use Corollary \ref{new} (ii) applied to the point $T_{j+1}Q_j\in[P_{j'},Q_{j'}]$ to conclude that  $T_{j'}T_{j+1}Q_j\in [Q_{j''},P_{j''+1}]$. 
Therefore $(x_{l+2},y_{l+2})\in \Omega_{\bar P}$. {This completes the proof of the property (2).} 
\end{proof}

\begin{rem} {One can prove along the same lines that if the partition $\bar A$ is given by the set $\bar Q=\{Q_1,\dots, Q_{8g-4}\}$, the properties (1) and (2) of the Reduction Theory Conjecture also hold.}
\end{rem}

\section{The cycle property}\label{s:cycle}

The map $f_{\bar A}$ is discontinuous at $x=A_i$, $1\le i \le 8g-4$. We associate to each point $A_i$  two forward orbits:  the {\em upper orbit} $\OO_u(A_i)=\{f_{\bar A}^{n}(T_iA_i)\}_{n\ge 0}$, and the {\em lower orbit}  $\OO_\ell(A_i)=\{f_{\bar A}^n(T_{i-1}A_i)\}_{n\ge 0}$. We use the convention that if an orbit hits one of the discontinuity points $A_j$, then the next iterate is computed according to the left or right location: for example, if the lower orbit of $A_i$ hits some $A_j$, then the next iterate will be $T_{j-1}A_j$, and if the upper orbit of $A_i$ hits some $A_j$ then the next iterate is $T_jA_j$.

Now we explore  the patterns in the above orbits. The following property plays an essential role in studying the maps $f_{\bar A}$ and $F_{\bar A}$.
\begin{defn}\label{def:cycles}
We say that the point $A_i$ has  the {\em cycle property} if for some non-negative integers $m_i,k_i$
\[
f_{\bar A}^{m_i}(T_iA_i)=f_{\bar A}^{k_i}(T_{i-1}A_i)=:c_{A_i}.
\]
We will refer to 
the set
\[
\{T_iA_i, f_{\bar A}T_iA_i,\dots ,f_{\bar A}^{m_i-1}T_iA_i\}
\]
as the {\em upper  side of the $A_i$-cycle}, the set
\[
\{T_{i-1}A_i, f_{\bar A}T_{i-1}A_i,\dots ,f_{\bar A}^{k_i-1}T_{i-1}A_i\}
\]
as the {\em lower side of the $A_i$-cycle}, and to $c_{A_i}$ as the {\em end of the $A_i$-cycle}.

\end{defn}

The main goal of this section is to prove Theorem \ref{cycle} (cycle property) stated in the Introduction. First, we prove some preliminary results.
\begin{lem}
The following identity holds
\begin{equation}\label{shortcycle}
T_{\sigma(i)+1}T_i=T_{\sigma(i-1)-1}T_{i-1}
\end{equation}
\end{lem}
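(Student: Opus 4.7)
The plan is to derive this identity entirely from the algebraic relations \eqref{r11}--\eqref{r13}, by manipulating the index permutations $\sigma$ and $\rho$.

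First, since $\rho(i)=\sigma(i)+1$ by \eqref{r12}, the left-hand side reads $T_{\sigma(i)+1}T_i = T_{\rho(i)}T_i$. The four-cycle relation \eqref{r13} rearranges to
\[
T_{\rho(i)}T_i \;=\; T_{\rho^2(i)}^{-1}\,T_{\rho^3(i)}^{-1},
\]
and \eqref{r11} lets me replace each inverse by $T_{\sigma(\cdot)}$, giving
\[
T_{\sigma(i)+1}T_i \;=\; T_{\sigma(\rho^2(i))}\,T_{\sigma(\rho^3(i))}.
\]

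The remaining task is purely combinatorial: I need to identify $\sigma(\rho^3(i))=i-1$ and $\sigma(\rho^2(i))=\sigma(i-1)-1$, after which the right-hand side $T_{\sigma(i-1)-1}T_{i-1}$ matches term by term. The key observation is that $\rho$ has order $4$ on the index set (obtained from \eqref{r13} by evaluating the product $T_{\rho^3(i)}T_{\rho^2(i)}T_{\rho(i)}T_i$ at $V_i$, which gives $V_{\rho^4(i)}=V_i$, and using injectivity of the vertex labeling), so $\rho^3=\rho^{-1}$. Inverting $\rho(j)=\sigma(j)+1$ and using the involution $\sigma^2=\mathrm{Id}$ (immediate from \eqref{r11}), I get $\rho^{-1}(i)=\sigma(i-1)$. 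Thus $\rho^3(i)=\sigma(i-1)$ and $\sigma(\rho^3(i))=i-1$; applying $\rho^{-1}$ once more yields $\rho^2(i)=\sigma(\sigma(i-1)-1)$, whence $\sigma(\rho^2(i))=\sigma(i-1)-1$.

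There is no genuine obstacle here --- the content is index bookkeeping on top of the two defining relations. The only mildly delicate point is justifying $\rho^4=\mathrm{Id}$ as a permutation of indices (not merely the product of generators equal to the identity in $\G$), but this is handled by the vertex evaluation above. One could alternatively avoid the $\rho^k$ notation altogether and verify the identity by a direct computation from the explicit formula for $\sigma$ on odd and even indices, but the permutation-theoretic route is cleaner and makes transparent that the identity is a consequence of the vertex cycle \eqref{r13} rather than of the specific $(8g-4)$-gon combinatorics.
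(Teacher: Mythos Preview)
Your proof is correct and follows the same strategy as the paper: rearrange the four-cycle relation \eqref{r13} to $T_{\rho(i)}T_i=T_{\rho^2(i)}^{-1}T_{\rho^3(i)}^{-1}$, then identify the right-hand side with $T_{\sigma(i-1)-1}T_{i-1}$. The only difference is in the index bookkeeping---the paper verifies $\sigma(\rho^2(i))=\sigma(i-1)-1$ and $\sigma(\rho^3(i))=i-1$ by an explicit parity case analysis on $i$, whereas you obtain the same identities more conceptually from $\rho^4=\mathrm{Id}$ and the inversion $\rho^{-1}(i)=\sigma(i-1)$; both are valid, and your vertex-evaluation justification of $\rho^4=\mathrm{Id}$ is sound.
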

\begin{proof}
Using relation \eqref{r13} stated in the Introduction, we have that $$T_{\rho(i)}T_i=T^{-1}_{\rho^2(i)}T^{-1}_{\rho^3(i)}$$ (where $\rho(i)=\sigma(i)+1$),  so it is enough to show that 
$T^{-1}_{\rho^2(i)}=T_{\sigma_{i-1}-1}$ and $T^{-1}_{\rho^3(i)}=T_{i-1}$. For that we analyze the two parity cases.

\medskip

\textbf{If $i$ is odd}, we have the following identities $\!\!\mod (8g-4)$:
\begin{align*}
\rho(i)&=\sigma(i)+1=4g-i+1 \text{ (even)}\\
\rho^2(i)&=\sigma(4g-i+1)+1=2-(4g-i+1)+1=2-4g+i=4g-2+i \text{ (odd)}\\
\rho^3(i)&=\sigma(2-4g+i)+1=4g-(2-4g+i)+1=8g-1-i=3-i \text{ (even)}
\end{align*}
Since $\sigma(i-1)=3-i=\rho^3(i)$, one has $T^{-1}_{\rho^3(i)}=T_{i-1}$ by using \eqref{r12}. Also, $\sigma(i-1)-1=2-(i-1)-1=2-i$ and $\sigma(\rho^2(i))=2-i$, hence $T^{-1}_{\rho^2(i)}=T_{\sigma(i-1)-1}$.

\medskip

\textbf{If $i$ is even}, we have the following identities $\!\!\mod (8g-4)$:
\begin{align*}
\rho(i)&=\sigma(i)+1=3-i \text{ (odd)}\\
\rho^2(i)&=\sigma(3-i)+1=4g-(3-i)+1=4g-2+i \text{ (even)}\\
\rho^3(i)&=\sigma(4g-2+i)+1=2-(4g-2+i)+1=5-4g-i=4g+1-i \text{ (odd)}
\end{align*}
Since $\sigma(i-1)=4g-(i-1)=\rho^3(i)$, one has $T^{-1}_{\rho^3(i)}=T_{i-1}$ by using \eqref{r12}. Also, 
$\sigma(i-1)-1=4g-i$ and $\sigma(\rho^2(i))=4g-i$, hence $T^{-1}_{\rho^2(i)}=T_{\sigma(i-1)-1}$.

Identity \eqref{shortcycle} has been proved for both cases.
\end{proof}

\begin{rem}
By introducing the notation $\theta(i)=\sigma(i)-1$, relation \eqref{shortcycle} can be written
\begin{equation}\label{shortcycle2}
T_{\rho(i)}T_i=T_{\theta(i-1)}T_{i-1},
\end{equation}
which will simplify further calculations. 
\end{rem}

\begin{lem}\label{lem:rel1}
For any $1\le i\le 8g-4$, $\theta(\theta(i-1)-1))=i$ and $\rho(\rho(i)+1)+1=i$.
\end{lem}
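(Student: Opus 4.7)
The plan is to reduce both identities to two elementary structural facts about the map $\sigma$ viewed as a permutation of $\Z/(8g-4)\Z$: (a) $\sigma$ is an involution, $\sigma(\sigma(i)) = i$; and (b) the ``shift identity'' $\sigma(j+2) \equiv \sigma(j) - 2 \pmod{8g-4}$. Fact (a) is immediate from \eqref{r11}: since $T_{\sigma(\sigma(i))} T_{\sigma(i)} = \Id$ and also $T_i T_{\sigma(i)} = \Id$, one reads off $T_{\sigma(\sigma(i))} = T_i$, so $\sigma(\sigma(i)) = i$. Fact (b) follows because $8g-4$ is even, so $j$ and $j+2$ always lie in the same parity class modulo $8g-4$, hence fall under the same branch of the piecewise definition of $\sigma$, and each branch is affine of slope $-1$.

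With (a) and (b) in hand, both asserted identities reduce to one-line manipulations. Rewriting $\rho(i) = \sigma(i)+1$ and $\theta(i) = \sigma(i)-1$, the second identity becomes the claim that
\[
\sigma(\sigma(i)+2) + 2 = i,
\]
which is obtained by pulling the inner $+2$ outside via (b) and then applying (a). The first identity becomes
\[
\sigma(\sigma(i-1) - 2) - 1 = i,
\]
handled by the same two steps with the sign in (b) reversed.

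I expect no serious obstacle. The only point to verify is that the shift identity is compatible with reduction modulo $8g-4$, and this is automatic since $8g-4$ is even and therefore the two parity classes of $\Z/(8g-4)\Z$ are preserved by addition of $\pm 2$. If one prefers to avoid even this mild structural observation, one can instead imitate the direct case-by-case parity analysis employed in the proof of \eqref{shortcycle}, substituting the explicit formulas for $\sigma$ into each of the two claims and checking the four parity combinations of $i$. Either route is a short computation.
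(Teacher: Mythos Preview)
Your proposal is correct; the paper's own proof is simply ``Immediate verification,'' and your argument is a clean way to carry out that verification, isolating the two structural facts (that $\sigma$ is an involution and that $\sigma(j+2)=\sigma(j)-2$) rather than expanding into the four parity cases. Either route yields the same short computation, so there is no substantive difference in approach.
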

\begin{proof}
Immediate verification.
\end{proof}
\begin{lem}\label{lem:period2}
The relations $f^2_{\bar A}(P_i)=P_i$ and $f^2_{\bar A}(Q_i)=Q_i$ hold for all $i$. In addition, $f_{\bar A}(P_i)=P_i$ if $i\in \{1, 2g, 4g-1, 6g-2\}$, and $f_{\bar A}(Q_i)=Q_i$ if $i\in \{2, 2g+1, 4g, 6g-1\}$.
\end{lem}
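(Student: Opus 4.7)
The plan is to proceed by direct computation, combining Proposition \ref{rem1} with the algebraic identities from Lemma \ref{lem:rel1}. The first step is to locate the points $P_i$ and $Q_i$ relative to the partition $\bar A$. Because $A_j\in(P_j,Q_j)$ for every $j$, the counter-clockwise order $\dots,Q_{i-1},P_i,Q_i,P_{i+1},\dots$ forces
\[
P_i\in[A_{i-1},A_i) \qquad\text{and}\qquad Q_i\in[A_i,A_{i+1}),
\]
so by definition of $f_{\bar A}$ we have $f_{\bar A}(P_i)=T_{i-1}(P_i)$ and $f_{\bar A}(Q_i)=T_i(Q_i)$. Applying Proposition \ref{rem1} (with a shift of index in the first formula) yields
\[
f_{\bar A}(P_i)=P_{\sigma(i-1)-1}=P_{\theta(i-1)},\qquad f_{\bar A}(Q_i)=Q_{\sigma(i)+2}=Q_{\rho(i)+1}.
\]

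Since these images are again of the same form $P_j$ and $Q_k$, I can iterate using the very same recipe. This gives
\[
f_{\bar A}^{2}(P_i)=P_{\theta(\theta(i-1)-1)} \qquad\text{and}\qquad f_{\bar A}^{2}(Q_i)=Q_{\rho(\rho(i)+1)+1},
\]
and Lemma \ref{lem:rel1} identifies both subscripts as $i$, which proves the first assertion.

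For the fixed-point assertion one needs the single-iteration identities $\theta(i-1)=i$ and $\rho(i)+1=i$, i.e., $\sigma(i-1)\equiv i+1\pmod{8g-4}$ and $\sigma(i)\equiv i-2\pmod{8g-4}$. Splitting each equation by the parity of $i$ to select the appropriate branch of $\sigma$, both reduce to a linear congruence modulo $4g-2$: for $P_i$ one obtains $i\equiv 1\pmod{4g-2}$ in the odd case and $i\equiv 2g\pmod{4g-2}$ in the even case, whose solutions in $\{1,\dots,8g-4\}$ are exactly $\{1,4g-1\}\cup\{2g,6g-2\}$; for $Q_i$ one gets $i\equiv 2g+1$ (odd case) and $i\equiv 2\pmod{4g-2}$ (even case), with solutions $\{2g+1,6g-1\}\cup\{2,4g\}$.

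The argument is entirely mechanical once Proposition \ref{rem1} and Lemma \ref{lem:rel1} are available; the only delicate point is the careful bookkeeping between the two parity branches of $\sigma$ and the reduction modulo $8g-4$, but no conceptual obstacle arises.
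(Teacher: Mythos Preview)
Your proof is correct and follows the same route as the paper: compute $f_{\bar A}(P_i)=P_{\theta(i-1)}$ and $f_{\bar A}(Q_i)=Q_{\rho(i)+1}$ via Proposition~\ref{rem1}, iterate, and invoke Lemma~\ref{lem:rel1}. The paper simply says the fixed-point part ``follows easily, too''; your explicit parity case split and reduction to linear congruences modulo $4g-2$ is precisely the verification being alluded to, and your computations are correct.
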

\begin{proof}
We have
\[
f^2_{\bar A}(P_i)=f_{\bar A}(T_{i-1}P_i)=f_{\bar A}(P_{\theta(i-1)})=P_{\theta(\theta(i-1)-1)}=P_{i}
\]
and 
\[
f^2_{\bar A}(Q_i)=f_{\bar A}(T_{i}Q_i)=f_{\bar A}(Q_{\rho(i)+1})=Q_{\rho(\rho(i)+1)+1}=Q_i
\]
by Lemma \ref{lem:rel1}. The second part follows easily, too.
\end{proof}
\begin{proof}[Proof of Theorem \ref{cycle}] Let us analyze the upper and lower orbits of  $A_i$.
By Proposition \ref{rem1} and the orientation preserving property of the M\"obius transformations, we have
\begin{equation}\label{eq:images}
T_i[P_i, Q_i]=[Q_{\rho(i)},Q_{\rho(i)+1}], \quad T_{i-1}[P_i, Q_i]=[P_{\theta(i-1)},P_{\theta(i-1)+1}],
\end{equation}
therefore 
\begin{equation}\label{eq:sets}
T_iA_i\in \left(Q_{\rho(i)}, Q_{\rho(i)+1}\right)\;,\; T_{i-1}A_i\in \left(P_{\theta(i-1)}, P_{\theta(i-1)+1}\right)
\end{equation}
Depending on whether $T_iA_i\in  (Q_{\rho(i)}, A_{\rho(i)+1})$ or $T_iA_i\in  {[}A_{\rho(i)+1}, Q_{\rho(i)+1})$ 
we have either
\[
f_{\bar A}(T_iA_i)=T_{\rho(i)}T_iA_i \text{ or } f_{\bar A}(T_iA_i)=T_{\rho(i)+1}T_iA_i\,.
\]
Also, depending on whether $T_{i-1}A_i\in  (P_{\theta(i-1)}, A_{\theta(i-1)}{]}$ or $T_{i-1}A_i\in  (A_{\theta(i-1)}, P_{\theta(i-1)+1})$ we have either
\[
f_{\bar A}(T_{i-1}A_i)=T_{\theta(i-1)-1}T_{i-1}A_i \text{ or } f_{\bar A}(T_{i-1}A_i)=T_{\theta(i-1)}T_iA_i\,.
\]

Notice that in the case when $T_iA_i\in  (Q_{\rho(i)}, A_{\rho(i)+1})$ and $T_{i-1}A_i\in  (A_{\theta(i-1)}, P_{\theta(i-1)+1})$ the cycle property holds immediately with $m_i=k_i=1$, by using relation \eqref{shortcycle2}. 

We are left to analyze the cases $T_iA_i\in [A_{\rho(i)+1}, Q_{\rho(i)+1})$ or $T_{i-1}A_i\in  (P_{\theta(i-1)}, A_{\theta(i-1)}]$. 

\begin{figure}[htb]
\includegraphics{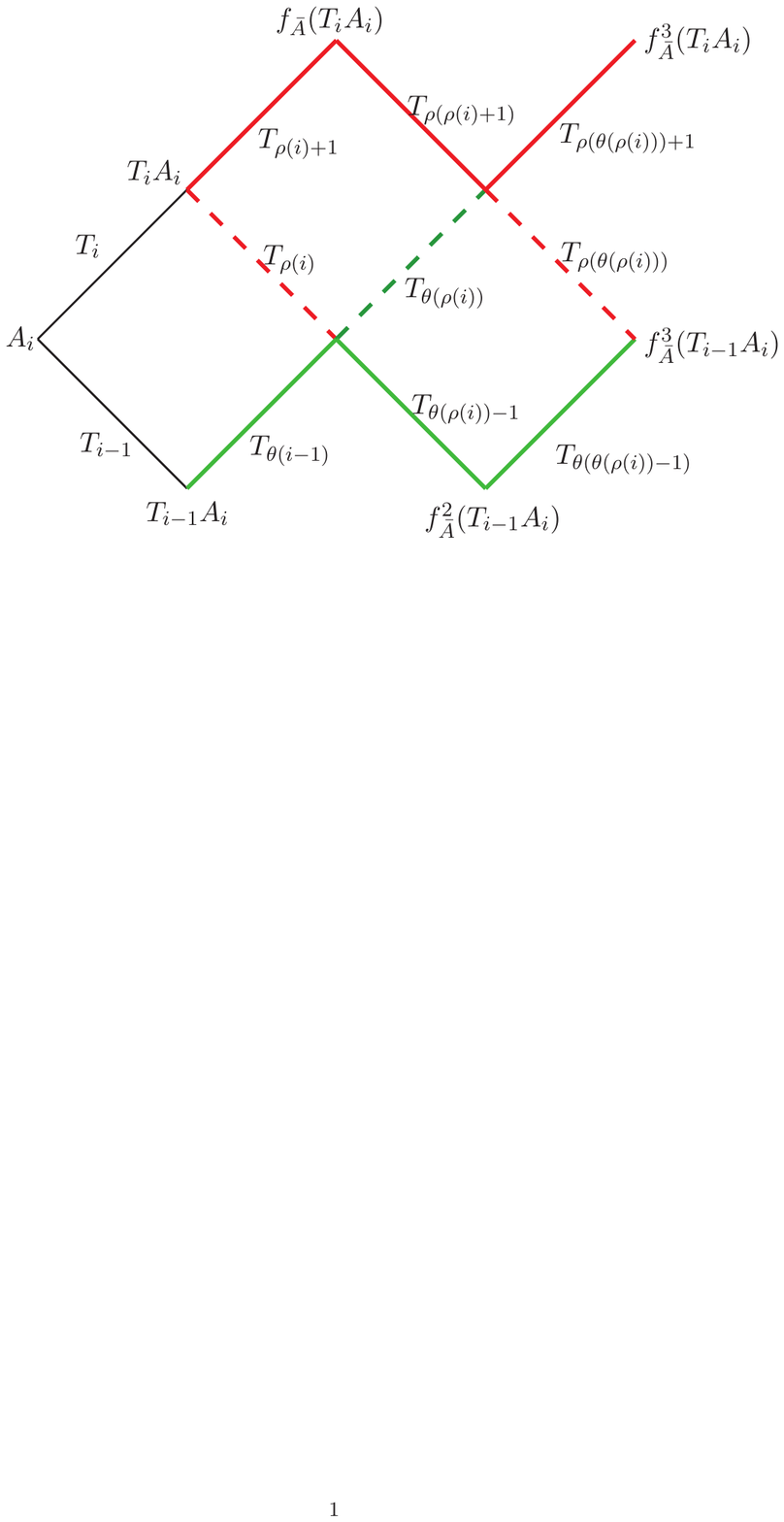}
\caption{The first iterates of the upper and lower orbits of $A_i$}
\end{figure}

\begin{lem}\label{lem:imp}
Given $x\in (P_i,Q_i)$ then one cannot have $T_ix\in [A_{\rho(i)+1}, Q_{\rho(i)+1})$ \underline{and} $T_{i-1}x\in  (P_{\theta(i-1)}, A_{\theta(i-1)}]$ simultaneously.
\end{lem}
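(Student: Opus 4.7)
The plan is to argue by contradiction, using the short-cycle identity \eqref{shortcycle2}. Suppose both $T_ix\in [A_{\rho(i)+1}, Q_{\rho(i)+1})$ and $T_{i-1}x\in (P_{\theta(i-1)}, A_{\theta(i-1)}]$ hold. Since $T_{\rho(i)}T_i=T_{\theta(i-1)}T_{i-1}$, the images $w_1:=T_{\rho(i)}(T_ix)$ and $w_2:=T_{\theta(i-1)}(T_{i-1}x)$ must agree. I will show that the first hypothesis forces $w_1$ into a short arc $(P_a,P_{a+1})$ of $\Sb$, while the second forces $w_2$ into a disjoint short arc $(Q_b,Q_{b+1})$, producing the contradiction.

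To locate the two arcs I would apply Proposition \ref{rem1} twice. Using that proposition with index $\rho(i)$: since $T_ix$ lies in the short arc $(P_{\rho(i)+1},Q_{\rho(i)+1})$ and $T_{\rho(i)}$ sends this arc bijectively and orientation-preservingly onto $(P_{\sigma(\rho(i))-1},P_{\sigma(\rho(i))})=(P_a,P_{a+1})$ with $a:=\theta(\rho(i))$, we get $w_1\in(P_a,P_{a+1})$. Using Proposition \ref{rem1} now with index $\theta(i-1)$: $T_{i-1}x$ lies in the short arc $(P_{\theta(i-1)},Q_{\theta(i-1)})$, which $T_{\theta(i-1)}$ sends onto $(Q_{\sigma(\theta(i-1))+1},Q_{\sigma(\theta(i-1))+2})=(Q_b,Q_{b+1})$ with $b:=\rho(\theta(i-1))$, so $w_2\in(Q_b,Q_{b+1})$.

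It remains to show that $b\equiv a+3\pmod{8g-4}$, from which the two short arcs $(P_a,P_{a+1})$ and $(Q_{a+3},Q_{a+4})$ are disjoint: in CCW order one reads $P_a,Q_a,P_{a+1},Q_{a+1},\dots,Q_{a+3},P_{a+4},Q_{a+4}$, so $(P_a,P_{a+1})$ sits strictly before $(Q_{a+3},Q_{a+4})$. Establishing $b-a\equiv 3$ is a direct calculation from the piecewise formula for $\sigma$ together with $\rho(k)=\sigma(k)+1$ and $\theta(k)=\sigma(k)-1$, carried out separately in the two parity cases for $i$ (which in turn determine the parities of $\rho(i)$ and $\theta(i-1)$, and hence which formula for $\sigma$ applies). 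For instance, when $i$ is odd one finds $a=i-4g$ and $b=i+4g-1$, whence $b-a=8g-1\equiv 3$; the even case proceeds analogously and yields the same offset.

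The main obstacle is purely this parity bookkeeping in the indices, since each of $\sigma$, $\rho$, $\theta$ is applied twice and both cases of the piecewise formula for $\sigma$ must be invoked. Once the shift $b=a+3$ is identified, the geometric disjointness of the two short arcs, and hence the contradiction with $w_1=w_2$, is immediate.
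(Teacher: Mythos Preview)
Your argument is correct and is genuinely different from the paper's. The paper works on the \emph{preimage} side: using Corollary~\ref{new} from the Appendix it shows that $T_i^{-1}\big([A_{\rho(i)+1},Q_{\rho(i)+1})\big)\subset(M_i,Q_i)$ while $T_{i-1}^{-1}\big((P_{\theta(i-1)},A_{\theta(i-1)}]\big)\subset(P_i,M_i)$, where $M_i$ is the midpoint of $(P_i,Q_i)$; since these preimages are disjoint, no $x$ can satisfy both conditions. This relies on the delicate angle estimate of Lemma~\ref{geometric}. You instead push \emph{forward} one more step and use only the algebraic identity \eqref{shortcycle2} together with Proposition~\ref{rem1}: the two hypotheses place the common value $T_{\rho(i)}T_ix=T_{\theta(i-1)}T_{i-1}x$ in the arcs $(P_a,P_{a+1})$ and $(Q_{a+3},Q_{a+4})$ respectively, and the index computation $b\equiv a+3\pmod{8g-4}$ (which I verified in both parity cases) shows these are disjoint.

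Your route is more elementary and self-contained---it avoids the Appendix entirely for this lemma. The paper's route, on the other hand, yields the extra quantitative information (the midpoint separation and the points $a_i$, $b_i$) that is reused elsewhere, notably in Remark~\ref{aibi} to produce an open set of partitions with the short cycle property, and in Corollary~\ref{lem:app} via the same angle estimate. So your proof is a cleaner proof of this particular lemma, while the paper's argument is doing double duty.
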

\begin{proof} Let $M_i$ be the midpoint of $(P_i,Q_i)$. By Corollary \ref{new} of the Appendix, there exists $a_i\in(M_i,Q_i)$ such that $T_i(a_i)=P_{\rho(i)+1}$ and $b_i\in(P_i,M_i)$ such that $T_{j-1}(b_j)=Q_{\theta(j-1)}$. 

Since $A_{\rho(i)+1}\in (P_{\rho(i)+1}, Q_{\rho(i)+1})$ and $A_{\theta(i-1)}\in (P_{\theta(i-1)},Q_{\theta(i-1)}$, in order for
$T_i x\in [A_{\rho(i)+1}, Q_{\rho(i)+1})$, $x$ must be in $(a_i,Q_i)$, and   in order for $T_{i-1}x\in  (P_{\theta(i-1)}, A_{\theta(i-1)}]$, $x$ must be in $(P_i,b_i)$. The lemma follows from the fact that these intervals are disjoint.
\end{proof}

\begin{lem}\label{lem:imp2}$\;$
\begin{itemize}
\item[(i)] Assume $x\in [A_j,Q_j)$ and $T_{j-1}(x)\in (P_{\theta(j-1)},A_{\theta(j-1)}]$, then 
$$T_{\theta(j-1)-1}T_{j-1}(x)\in (x,P_{j+1}).$$
\item[(ii)] Assume $x\in (P_j,A_j]$ and $T_{j}(x)\in [A_{\rho(j)+1},Q_{\rho(j)+1})$, then 
$$T_{\rho(j)+1}T_{j}(x)\in (Q_{j-1},x).$$
\end{itemize}
\end{lem}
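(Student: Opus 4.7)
The plan is to recognize each composition in Lemma \ref{lem:imp2} as a hyperbolic M\"obius transformation whose two boundary fixed points I can identify explicitly, determine the direction in which points are pushed along the relevant arc, and then use the one-sided hypothesis on $T_{j-1}(x)$ (respectively $T_j(x)$) to pin down the far endpoint of the image interval.

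Let $g:=T_{\theta(j-1)-1}T_{j-1}$ (for part (i)) and $h:=T_{\rho(j)+1}T_j$ (for part (ii)). Using Proposition \ref{rem1} together with Lemma \ref{lem:rel1} one checks directly that $g(P_j)=P_j$ and $g(Q_{j+1})=Q_{j+1}$, and that $h(Q_j)=Q_j$ and $h(P_{j-1})=P_{j-1}$. Indeed, $T_{j-1}(P_j)=P_{\theta(j-1)}$ and $T_{\theta(j-1)-1}(P_{\theta(j-1)})=P_{\sigma(\theta(j-1)-1)-1}=P_j$, since $\sigma(\theta(j-1)-1)=j+1$ by Lemma \ref{lem:rel1}; this just recovers the period-2 orbit of $P_j$ from Lemma \ref{lem:period2}. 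The identity $g(Q_{j+1})=Q_{j+1}$ follows from $T_{j-1}(Q_{j+1})=Q_{\theta(j-1)+1}$ and $T_{\theta(j-1)-1}(Q_{\theta(j-1)+1})=Q_{j+1}$. The two statements for $h$ are verified analogously: the first is the period-2 orbit of $Q_j$, and the second uses $T_j(P_{j-1})=P_{\rho(j)}$ together with $T_{\rho(j)+1}(P_{\rho(j)})=P_{j-1}$. Since each of $g,h$ is a M\"obius transformation with two distinct fixed points on $\Sb$, each is hyperbolic, with one attracting and one repelling fixed point.

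To determine the direction of motion I evaluate $g$ at $Q_j$ and $h$ at $P_j$. For $g$: $T_{j-1}(Q_j)=P_{\theta(j-1)+1}$, and since the cyclic order is $Q_{\theta(j-1)},P_{\theta(j-1)+1},Q_{\theta(j-1)+1}$ with $T_{\theta(j-1)-1}Q_{\theta(j-1)}=P_{j+1}$ and $T_{\theta(j-1)-1}Q_{\theta(j-1)+1}=Q_{j+1}$, orientation-preservation forces $g(Q_j)\in(P_{j+1},Q_{j+1})$. Thus $g(Q_j)>Q_j$, so on the arc $(P_j,Q_{j+1})$ (which contains $[A_j,Q_j)$) the map $g$ pushes points counterclockwise toward the attracting fixed point $Q_{j+1}$, giving $g(x)>x$ for all $x\in[A_j,Q_j)$. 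For $h$: $T_j(P_j)=Q_{\rho(j)}$, and the cyclic order $P_{\rho(j)},Q_{\rho(j)},P_{\rho(j)+1}$ together with $T_{\rho(j)+1}P_{\rho(j)}=P_{j-1}$ and $T_{\rho(j)+1}P_{\rho(j)+1}=Q_{j-1}$ yields $h(P_j)\in(P_{j-1},Q_{j-1})$. Thus $h(P_j)<P_j$, so on the arc $(P_{j-1},Q_j)$ (which contains $(P_j,A_j]$) the map $h$ pushes points clockwise toward the attracting fixed point $P_{j-1}$, giving $h(x)<x$ for all $x\in(P_j,A_j]$.

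Finally, the hypotheses give the remaining endpoint bound in each case. In part (i), $T_{j-1}(x)\le A_{\theta(j-1)}<Q_{\theta(j-1)}$, and $T_{\theta(j-1)-1}$ maps $(P_{\theta(j-1)},Q_{\theta(j-1)})$ monotonically onto $(P_j,P_{j+1})$, so $g(x)<T_{\theta(j-1)-1}(Q_{\theta(j-1)})=P_{j+1}$; combined with $g(x)>x$ this yields $g(x)\in(x,P_{j+1})$. In part (ii), $T_j(x)\ge A_{\rho(j)+1}>P_{\rho(j)+1}$, and $T_{\rho(j)+1}$ maps $(P_{\rho(j)+1},Q_{\rho(j)+1})$ monotonically onto $(Q_{j-1},Q_j)$, so $h(x)>T_{\rho(j)+1}(P_{\rho(j)+1})=Q_{j-1}$; combined with $h(x)<x$ this yields $h(x)\in(Q_{j-1},x)$. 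The only real obstacle is index bookkeeping with $\sigma,\rho,\theta$, which is controlled entirely by the identities $\theta(\theta(i-1)-1)=i$ and $\rho(\rho(i)+1)+1=i$ of Lemma \ref{lem:rel1}.
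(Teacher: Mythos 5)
Your proof is correct, but it establishes the crucial monotonicity step by a genuinely different mechanism than the paper. Both arguments begin the same way: $g=T_{\theta(j-1)-1}T_{j-1}$ fixes $P_j$ (Lemma \ref{lem:period2}) and sends $x$ into $(P_j,P_{j+1})$ because $T_{j-1}(x)\in(P_{\theta(j-1)},Q_{\theta(j-1)})$ and $T_{\theta(j-1)-1}$ carries that interval onto $(P_j,P_{j+1})$. The paper then finishes in one line by observing that $g$ \emph{expands} the arc $[P_j,x]$ --- both factors act inside their isometric circles there --- so the image arc $[P_j,g(x)]$, anchored at the fixed endpoint $P_j$ and contained in $[P_j,P_{j+1}]$, must overshoot $x$. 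You instead compute the second fixed point $Q_{j+1}$ of $g$ (and $P_{j-1}$ of $h$), classify the composition as hyperbolic, and read off the direction of motion on the arc $(P_j,Q_{j+1})$ from the single evaluation $g(Q_j)\in(P_{j+1},Q_{j+1})$; all of these fixed-point and image computations check out against Proposition \ref{rem1} and Lemma \ref{lem:rel1}. This is a clean alternative: it avoids any appeal to expansion and replaces it with the standard north--south dynamics of a hyperbolic element, at the cost of two extra fixed-point computations and the evaluation at $Q_j$ (resp.\ $P_j$) to fix the orientation of the motion. One small point to make explicit: ``two boundary fixed points implies hyperbolic'' requires $g\neq\Id$ (the identity fixes everything); this is immediate from your own computation $g(Q_j)\neq Q_j$, or from the fact that every nontrivial element of a cocompact torsion-free Fuchsian group is hyperbolic. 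With that noted, your index bookkeeping and the final intersection of the two one-sided bounds are correct.
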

\begin{proof}
(i) Notice that $T_{\theta(j-1)-1}T_{j-1}(P_j)=f^2_{\bar A}(P_j)=P_j$ by Lemma \ref{lem:period2}. Also  
$T_{j-1}(x)\in (P_{\theta(j-1)},Q_{\theta(j-1)})$ therefore
$$T_{\theta(j-1)-1}T_{j-1}(x)\in  T_{\theta(j-1)-1}(P_{\theta(j-1)},Q_{\theta(j-1)})=(P_{j},P_{j+1})$$ by \eqref{eq:sets} and the fact that $\theta(\theta(j-1)-1)=j$ by Lemma \ref{lem:rel1}. It follows that
$$(T_{\theta(j-1)-1}T_{j-1})[P_j,x]=[P_j, T_{\theta(j-1)-1}T_{j-1}(x)]\subset [P_{j},P_{j+1}]\,.$$ 
Since $T_{\theta(j-1)-1}T_{j-1}$ expands $[P_j,x]$ we get
$T_{\theta(j-1)-1}T_{j-1}(x)\in (x,P_{j+1})$. 

Part (ii) can be proved similarly.  
\end{proof}

We continue the proof of the theorem and assume the situation \[T_iA_i\in [A_{\rho(i)+1}, Q_{\rho(i)+1}).\] Lemma \ref{lem:imp} implies that $T_{i-1}A_i{\notin}  (P_{\theta(i-1)}, A_{\theta(i-1)}]$, {i.e. $T_{i-1}A_i\in(A_{\theta(i-1)},P_{\theta(i-1)+1})$.}
 Notice that $f_{\bar A}(T_{i-1}A_i)$ can be rewritten as $T_{\rho(i)}T_iA_i$ by Lemma \ref{shortcycle}, and the beginning of the two orbits of $A_i$ are given by
\[
\OO_u(A_i)=\{T_iA_i, T_{\rho(i)+1}T_iA_i,\dots\},\quad
\OO_l(A_i)=\{T_{i-1}A_i, T_{\rho(i)}T_iA_i,\dots\}\,.
\]

We can now apply Lemma \ref{lem:imp2} part (ii) for $x=A_i$ to obtain that 
$$f_{\bar A}(T_iA_i)=T_{\rho(i)+1}T_iA_i\in (Q_{i-1},A_i),$$ therefore
$f^2_{\bar A}(T_iA_i)=T_{\rho(\rho(i)+1)}T_{\rho(i)+1}(T_iA_i)$ (recalling that $\rho(\rho(i)+1)=i-1$).

On the other hand $T_{\rho(i)}T_iA_i\in \left(P_{\theta(\rho(i))}, P_{\theta(\rho(i))+1}\right)$. Depending on whether 
$T_{\rho(i)}T_iA_i\in  \left(P_{\theta(\rho(i))}, A_{\theta(\rho(i))}\right]$ or $T_{\rho(i)}T_iA_i\in  \left(A_{\theta(\rho(i))}, P_{\theta(\rho(i))+1}\right)$ we have that 
$$f_{\bar A}(T_{\rho(i)}T_iA_i)=T_{\theta(\rho(i))-1}(T_{\rho(i)}T_iA_i) \text{ or } f_{\bar A}(T_{\rho(i)}T_iA_i)=T_{\theta(\rho(i))}(T_{\rho(i)}T_iA_i)\,.$$

In the {latter} case, the cycle property holds, by using relation \eqref{shortcycle2}: we have $f^2_{\bar A}(T_iA_i)=f^2_{\bar A}(T_{i-1}A_i)$, i.e. 
$$
T_{\rho(\rho(i)+1)}T_{\rho(i)+1}(T_iA_i)=T_{\theta(\rho(i))}T_{\rho(i)}(T_iA_i)\,.
$$
We have
\[
\begin{split}
\OO_u(A_i)&=\{T_iA_i, T_{\rho(i)+1}T_iA_i,T_{\theta(\rho(i))}(T_{\rho(i)}T_iA_i)\dots\}\\
\OO_l(A_i)&=\{T_{i-1}A_i, T_{\rho(i)}T_iA_i,T_{\theta(\rho(i)-1{)}}(T_{\rho(i)}T_iA_i)\dots\}\,.
\end{split}
\]
\begin{prop}
Assume that  $T_iA_i\in [A_{\rho(i)+1}, Q_{\rho(i)+1})$, and $A_i$ does not satisfy the cycle property up to iteration $2M+2$. Let $\psi_n=(\theta\circ\rho)^n$. 
 Then, for any $0\le n\le M$,
\begin{equation}\label{eq:odd}
\begin{split}
&f_{\bar A}^{2n}(T_iA_i)\in [A_{\rho(\psi_n(i))+1},Q_{\rho(\psi_n(i))+1})\\
&f_{\bar A}^{2n+1}(T_iA_i)=T_{\rho(\psi_n(i))+1}(f_{\bar A}^{2n}(T_iA_i))\\
&f_{\bar A}^{2n+1}(T_{i-1}A_i)=T_{\theta(\psi_{n}(i)-1)}(f_{\bar A}^{2n}(T_{i-1}A_{i}))=T_{\rho(\psi_n(i))}(f_{\bar A}^{2n}(T_iA_i))
\end{split}
\end{equation}
\begin{equation}\label{eq:even}
\begin{split}
&f_{\bar A}^{2n+1}(T_{i-1}A_i)\in (P_{\psi_{n+1}(i)},A_{\psi_{n+1}(i)}]\\
&f_{\bar A}^{2n+2}(T_iA_i)=T_{\rho(\psi_{n}(i))+1}(f_{\bar A}^{2n+1}(T_iA_i))=T_{\psi_{n+1}(i)}(f_{\bar A}^{2n+1}(T_{i-1}A_i))\\
&f_{\bar A}^{2n+2}(T_{i-1}A_i)=T_{\psi_{n+1}(i)-1}(f_{\bar A}^{2n+1}(T_{i-1}A_i)) 
\end{split}
\end{equation} 
\end{prop}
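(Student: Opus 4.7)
The plan is to prove the proposition by induction on $n$, advancing in blocks of two iterations of $f_{\bar A}$. At each block, the non-closure hypothesis will be used to rule out a short-cycle alternative, while the identity \eqref{shortcycle2} together with Lemmas \ref{lem:imp}, \ref{lem:imp2}, and \ref{lem:rel1} encodes the geometric and combinatorial structure of the orbit.

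For the base case $n=0$, most of the analysis has in fact been carried out in the discussion preceding the proposition. The first equation of \eqref{eq:odd} at level $0$ is exactly the hypothesis. Lemma \ref{lem:imp} then forces $T_{i-1}A_i \in (A_{\theta(i-1)}, P_{\theta(i-1)+1})$, so that $f_{\bar A}(T_iA_i) = T_{\rho(i)+1}T_iA_i$ and $f_{\bar A}(T_{i-1}A_i) = T_{\theta(i-1)}T_{i-1}A_i = T_{\rho(i)}T_iA_i$ by \eqref{shortcycle2}; this yields the remaining equations of \eqref{eq:odd}. If the iterate $T_{\rho(i)}T_iA_i$ fell in $(A_{\theta(\rho(i))}, P_{\theta(\rho(i))+1})$, a second application of \eqref{shortcycle2} would close the cycle at step $2$, contradicting the non-closure hypothesis. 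Hence $T_{\rho(i)}T_iA_i \in (P_{\psi_1(i)}, A_{\psi_1(i)}]$, which is the first equation of \eqref{eq:even}, and the remaining equations of \eqref{eq:even} at level $0$ follow by direct computation of $f^2$ using \eqref{shortcycle2} once more.

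For the inductive step I would assume both \eqref{eq:odd} and \eqref{eq:even} at level $n < M$ and derive them at level $n+1$. The last equation of \eqref{eq:even} at level $n$ gives $f_{\bar A}^{2n+2}(T_iA_i) = T_{\psi_{n+1}(i)}(f_{\bar A}^{2n+1}(T_{i-1}A_i))$ with $f_{\bar A}^{2n+1}(T_{i-1}A_i) \in (P_{\psi_{n+1}(i)}, A_{\psi_{n+1}(i)}]$. By Proposition \ref{rem1} this image lies in $(Q_{\rho(\psi_{n+1}(i))}, T_{\psi_{n+1}(i)}A_{\psi_{n+1}(i)}]$. To conclude $f_{\bar A}^{2n+2}(T_iA_i) \in [A_{\rho(\psi_{n+1}(i))+1}, Q_{\rho(\psi_{n+1}(i))+1})$, which is the first equation of \eqref{eq:odd} at level $n+1$, I would use a ``renamed'' version of the base-case configuration at the index $\psi_{n+1}(i)$, combined with Lemma \ref{lem:imp2}(ii) and the non-closure assumption through iteration $2n+4$ to rule out the short-cycle alternative. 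The remaining equations of \eqref{eq:odd} and \eqref{eq:even} at level $n+1$ then follow by repeating the base-case argument at the shifted index: identify the active branch of $f_{\bar A}$, invoke Lemma \ref{lem:imp} to pin down the lower orbit, apply \eqref{shortcycle2} to link upper and lower orbits, and use non-closure one more time to exclude short-cycling at step $2(n+1)+2$.

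The hard part will be the index bookkeeping. Every transition involves a composition of $\rho$, $\theta$, and $\sigma$, and Lemma \ref{lem:rel1} must be invoked at the right places to match the indices in \eqref{eq:odd} and \eqref{eq:even}. The most delicate point is verifying at each inductive step that the image $T_{\psi_{n+1}(i)}A_{\psi_{n+1}(i)}$ actually lands in $[A_{\rho(\psi_{n+1}(i))+1}, Q_{\rho(\psi_{n+1}(i))+1})$, since this is precisely the base-level configuration shifted to the index $\psi_{n+1}(i)$; once that renaming is justified, the rest of the inductive step is a mechanical propagation of the base-case reasoning.
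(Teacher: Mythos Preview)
Your inductive scheme and base case match the paper's, but the inductive step has a real gap. To obtain the first line of \eqref{eq:odd} at level $n+1$ from non-closure, you must already know that $f_{\bar A}^{2n+2}(T_{i-1}A_i)$ sits in $(A_{\theta(\psi_{n+1}(i)-1)},P_{\theta(\psi_{n+1}(i)-1)+1})$, so that $f_{\bar A}^{2n+3}(T_{i-1}A_i)=T_{\theta(\psi_{n+1}(i)-1)}(\cdot)=T_{\rho(\psi_{n+1}(i))}\bigl(f_{\bar A}^{2n+2}(T_iA_i)\bigr)$ via \eqref{shortcycle2}; only then does the alternative $f_{\bar A}^{2n+2}(T_iA_i)\in(Q_{\rho(\psi_{n+1}(i))},A_{\rho(\psi_{n+1}(i))+1})$ force the cycle to close at step $2n+3$. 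Your plan locates the lower orbit with Lemma~\ref{lem:imp}, but that lemma only gives an exclusion \emph{after} you already know the upper-orbit line of \eqref{eq:odd} holds --- exactly what you are trying to prove --- so the argument is circular. Lemma~\ref{lem:imp2}(ii) does not help either: its hypothesis is again $T_j(x)\in[A_{\rho(j)+1},Q_{\rho(j)+1})$, the very conclusion in question.

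The paper breaks the circularity by applying Lemma~\ref{lem:imp2}(i) to $x=f_{\bar A}^{2n}(T_iA_i)$ with $j=\rho(\psi_n(i))+1$: both hypotheses of (i) are supplied by the level-$n$ lines of \eqref{eq:odd} and \eqref{eq:even}, and the conclusion places $f_{\bar A}^{2n+2}(T_{i-1}A_i)$ in the desired interval directly, \emph{before} any appeal to non-closure. This is the missing ingredient in your plan. A smaller point: your ``delicate step'' is misidentified --- it is the orbit point $f_{\bar A}^{2n+2}(T_iA_i)=T_{\psi_{n+1}(i)}\bigl(f_{\bar A}^{2n+1}(T_{i-1}A_i)\bigr)$, not the partition-point image $T_{\psi_{n+1}(i)}A_{\psi_{n+1}(i)}$, whose location must be pinned down; the inductive configuration is not a literal renaming of the base case at index $\psi_{n+1}(i)$.
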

\begin{figure}[htb]
\includegraphics{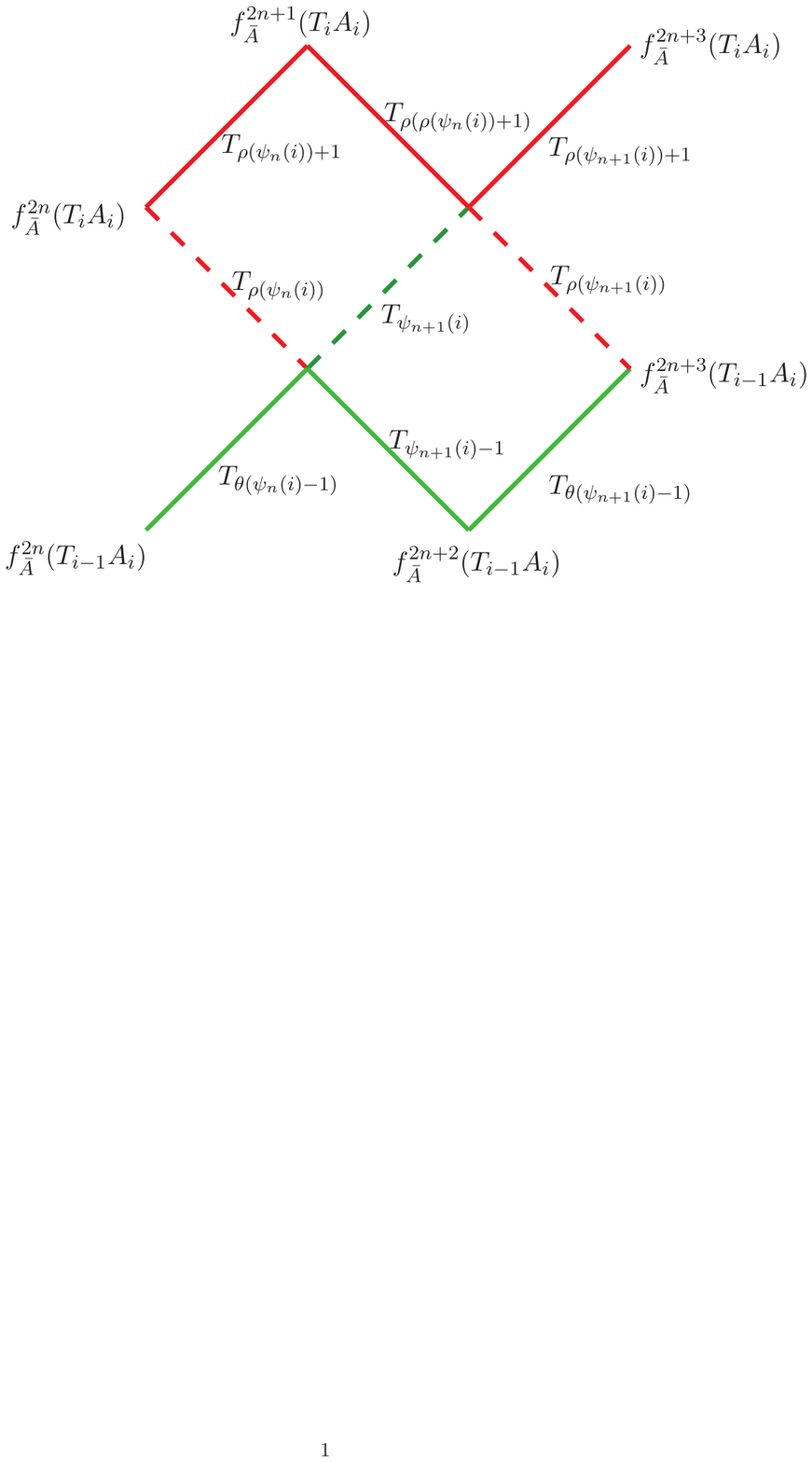}
\caption{Iterates of upper and lower orbits of $A_i$}
\end{figure}
\begin{proof}
We prove this by induction. The case $n=0$ has been already presented above ($\psi_0(i)=i$).
Assume now that the relations are true for $k=1,2,\dots, n<M$. We analyze the case $k=n+1$.
Let $\ell= \psi_n(i)$. First, notice that
\[
\begin{split}
f_{\bar A}^{2n+2}(T_{i-1}A_i)&=T_{\psi_{n+1}(i)-1}(f_{\bar A}^{2n+1}(T_{i-1}A_i))=T_{\psi_{n+1}(i)-1}T_{\rho(\psi_n(i))}(f_{\bar A}^{2n}(T_iA_i))\\
&=T_{\theta(\rho(\ell))-1}T_{\rho(\ell)}(f_{\bar A}^{2n}(T_iA_i))
\end{split}
\]
Since  $$f_{\bar A}^{2n}(T_iA_i)\in [A_{\rho(\psi_n(i))+1},Q_{\rho(\psi_n(i))+1})=[A_{\rho(\ell)+1},Q_{\rho(\ell)+1})$$ and $$T_{\rho(\ell)}(f_{\bar A}^{2n}(T_iA_i))=f_{\bar A}^{2n+1}(T_{i-1}A_i)\in  (P_{\theta(\rho(l))},A_{\theta(\rho(l))}]$$ we can apply Lemma \ref{lem:imp2} part (i) for $x=f_{\bar A}^{2n}(T_iA_i)$, $j=\rho(\ell)+1$ to conclude that
$f_{\bar A}^{2n+2}(T_{i-1}A_i)\in (A_{\rho(\ell)+1},P_{\rho(\ell)+2})$ and
\begin{equation}\label{eq:2n3}  
f_{\bar A}^{2n+3}(T_{i-1}A_i)=T_{\rho(\ell)+1}(f_{\bar A}^{2n+2}(T_{i-1}A_i))=T_{\theta(\psi_{n+1}(i)-1)}(f_{\bar A}^{2n+2}(T_{i-1}A_{i}))
\end{equation}
because $\rho(\ell)+1=\theta(\theta(\rho(\ell))-1)=\theta(\psi_{n+1}(i)-1)$. 

Since $$f_{\bar A}^{2n+2}(T_iA_i)=T_{\psi_{n+1}(i)}(f_{\bar A}^{2n+1}(T_{i-1}A_i))$$ and $$f_{\bar A}^{2n+1}(T_{i-1}A_i)\in (P_{\psi_{n+1}(i)},A_{\psi_{n+1}(i)}]$$ we have that $f_{\bar A}^{2n+2}(T_iA_i)\in {(Q_{\rho(\psi_{n+1}(i))},Q_{\rho(\psi_{n+1}(i))+1})}$. 
Using relations \eqref{shortcycle2}, \eqref{eq:even}, \eqref{eq:2n3}, the following holds:
\begin{align*}
T_{\rho(\psi_{n+1}(i))}(f_{\bar A}^{2n+2}(T_{i}A_i))&=T_{\rho(\psi_{n+1}(i))}T_{\psi_{n+1}(i)}(f_{\bar A}^{2n+1}(T_{i-1}A_i))\\
&=T_{\theta(\psi_{n+1}(i)-1)}T_{\psi_{n+1}(i)-1}(f_{\bar A}^{2n+1}(T_{i-1}A_i))\\
&=f_{\bar A}^{2n+3}(T_{i-1}A_i).
\end{align*}
For the cycle property not to hold,  one has
$$f_{\bar A}^{2n+3}(T_iA_i)\ne f_{\bar A}^{2n+3}(T_{i-1}A_i)\;(=T_{\rho(\psi_{n+1}(i))}(f_{\bar A}^{2n+2}(T_iA_i))).$$ Hence, 
$$f_{\bar A}^{2n+2}(T_iA_i)\in (Q_{\rho(\psi_{n+1}(i))},Q_{\rho(\psi_{n+1}(i))+1})\setminus (Q_{\rho(\psi_{n+1}(i))},A_{\rho(\psi_{n+1}(i))+1})
$$ and relations \eqref{eq:odd} are proved for $k=n+1$. 

One proceeds similarly to prove \eqref{eq:even} for $k=n+1$.
\end{proof}

We can now complete the proof of Theorem \ref{cycle}. Assume by contradiction that the cycle property does not hold. Thus relations \eqref{eq:odd} and \eqref{eq:even} will be satisfied for all $n$. In particular $f_{\bar A}^{2n+1}(T_{i-1}A_i)\in (P_{\psi_{n+1}(i)},A_{\psi_{n+1}(i)}]$. Recall that $\psi_n(i)=(\theta\circ\rho)^n(i)$. A direct computation shows that $\theta(\rho(i))=4g-4+i \pmod {8g-4}$, so
$$\psi_n(i)=i+n(4g-4) \pmod {8g-4}.$$ We show 
that there exists $n$ such that $\psi_{n}(i)$ belongs to a congruence class of one of the numbers
$\{2,2g+1, 4g, 6g-1\}$. More precisely, 
\begin{enumerate}
\item if $i\equiv 0\pmod 4$, then there exists $n$ such that
\[
\psi_n(i)\equiv 4g\pmod{8g-4};
\]
\item if $i\equiv 2\pmod 4$, then there exists $n$ such that
\[
\psi_n(i)\equiv 2\pmod{8g-4};
\]
\item if $i\equiv 1\pmod 4$ and $g$ is even, then there exists $n$ such that
\[
\psi_n(i)\equiv 2g+1\pmod{8g-4};
\]
if $i\equiv 1\pmod 4$ and $g$ is odd, then there exists $n$ such that
\[
\psi_n(i)\equiv 6g-1\pmod{8g-4};
\]
\item if $i\equiv 3\pmod 4$ and $g$ is even, then there exists $n$ such that
\[
\psi_n(i)\equiv 6g-1\pmod{8g-4};
\]
if $i\equiv 3\pmod 4$ and $g$ is odd, then there exists $n$ such that
\[
\psi_n(i)\equiv 2g+1\pmod{8g-4};
\]
\end{enumerate}
This follows from the fact that for any $g\geq 2$, $g-1$ and $2g-1$ are relatively prime.

We will give a proof of the last 
statement in part (4). Let $i=4k+3$. Then $\psi_n(i)=4k+3+4n(g-1)$. Since $g$ is odd, $2g-2$ is divisible by $4$, i.e. $2g-2=4s$ for some integer $s$. Since $g-1$ and $2g-1$ are relatively prime, there exist integers $n$ and $m$ such that
\[
k+n(g-1)=s+m(2g-1).
\]
Multiplying by $4$ and adding $3$ to both sides, we obtain
\[
3+4k+4n(g-1)=3+4s+4m(2g-1)=2g-2+4m(2g-1)+3,
\]
and therefore
\[
\psi_n(i)\equiv 2g+1\pmod{8g-4}.
\]

Let $n$ be such an integer, with the property that $\psi_n(i)$ belongs to the congruence class of one of the
numbers $\{2, 2g+ 1, 4g, 6g -1\}$. 
By Lemma \ref{lem:period2}, 
$Q_{\psi_{n}(i)}$ is fixed by $T_{\psi_{n}(i)}$. 
Using \eqref{eq:even} we have
$f_{\bar A}^{2n-1}(T_{i-1}A_i)\in (P_{\psi_{n}(i)},A_{\psi_{n}(i)}]$ and 
$$f_{\bar A}^{2n}(T_iA_i)=T_{\psi_{n}(i)}(f_{\bar A}^{2n-1}(T_{i-1}A_i))\in (Q_{\psi_n(i)-1},T_{\psi_{n}(i)}A_{\psi_{n}(i)}]\subset (Q_{\psi_n(i)-1},Q_{\psi_n(i)}).$$
The interval $[A_{\psi_{n}(i)},Q_{\psi_{n}(i)})$ {expands} under $T_{\psi_{n}(i)}$, so
$T_{\psi_{n}(i)}A_{\psi_{n}(i)} \in (Q_{\psi_n(i)-1},A_{\psi_n(i)})$. Therefore,
$f_{\bar A}^{2n}(T_iA_i)\in (Q_{\psi_n(i)-1},A_{\psi_n(i)})$, which assures us that the cycle property holds since
\[f_{\bar A}^{2n+1}(T_iA_i)=T_{\psi_n(i)-1}(f_{\bar A}^{2n}(T_iA_i))=T_{\rho(\psi_{n}(i))}(f_{\bar A}^{2n}(T_iA_i))=f_{\bar A}^{2n+1}(T_{i-1}A_i)\,.\qedhere\]
\end{proof}

\begin{rem} In contrast, if $\bar A=\bar P$ the upper and lower orbits of all $P_i$ are  periodic.
Specifically, 
\[
\begin{aligned}
\OO_u(P_i ) &=\{Q_{\rho(i)+1}, Q_{\rho(i)+1},  \dots\}\text{ if }i\in \{2, 2g+1, 4g, 6g-1\}\\
\OO_u(P_i ) &=\{, Q_{\rho(i)+1}, Q_i, Q_{\rho(i)+1}, Q_i, \dots\}\text{ for other }i,
\end{aligned}
\]
and
\[
\begin{aligned}
\OO_\ell(P_i)&=\{P_i, P_i,\dots\}\text{ if }i\in \{1, 2g, 4g-1, 6g-2\}\\
\OO_\ell(P_i)&=\{P_{\theta(i-1)}, P_i, P_{\theta(i-1)},\dots\}\text{ for other }i.
\end{aligned}
\]
Notice that these two phenomena have something in common: in both cases the sets of values are finite.
\end{rem}

We have seen in the proof of Theorem \ref{cycle} that,
when $T_iA_i\in  (Q_{\rho(i)}, A_{\rho(i)+1})$ and $T_{i-1}A_i\in  (A_{\theta(i-1)}, P_{\theta(i-1)+1})$, the cycle property holds immediately with $m_i=k_i=1$, by using relation \eqref{shortcycle2}. In this case we have
\begin{equation}\label{eq:short}
f_{\bar A}(T_iA_i)=f_{\bar A}(T_{i-1}A_i).
\end{equation}

\begin{defn}A partition point $A_i$ is said to satisfy the {\em short cycle property} if  (\ref{eq:short}) holds, or, equivalently, if
\[
T_iA_i\in  (Q_{\rho(i)}, A_{\rho(i)+1})\text{ and }T_{i-1}A_i\in  (A_{\theta(i-1)}, P_{\theta(i-1)+1}).
\]
\end{defn}
This notion  will be used in the next section.

\begin{rem} \label{aibi} The existence of an open set of partitions $\bar A$ satisfying the short cycle property follows from Corollary \ref{new} of the Appendix: it is sufficient to take $A_i\in (b_i, a_i)$ for each $i$.
\end{rem}

\section{Construction of $\Omega_{\bar A}$}\label{s:bijectivity}
According to the philosophy of the $\sz$ situation treated in \cite{KU3} we expect the $y$-levels of the attractor set of $F_{\bar A}$, $\Omega_{\bar A}$,  to be comprised from the values of the cycles of 
$\{A_i\}$. If the cycles are short,  the situation is rather simple:
$y$-levels of the upper connected component of $\Omega_{\bar A}$ are 
\[
B_i:=T_{\sigma(i-1)}A_{\sigma(i-1)},
\]
and
$y$-levels of the lower connected component of $\Omega_{\bar A}$ are 
\[
{C_i:=T_{\sigma(i+1)}A_{\sigma(i+1)+1}}.
\]
The $x$-levels in this case are the same as for the Bowen-Series map $F_{\bar P}$, and the set $\Omega_{\bar A}$ is determined by the  corner points located in the strip 
\[
\{(x,y\in\Sb\times\Sb\mid y\in[A_i,A_{i+1})\}
\]
(see Figure \ref{fig:strip}) with coordinates
$$(P_{i},B_i) \text{ (upper part )}  \; \text{ and } \;  (Q_{i+1},C_i) \text{ (lower part)}.$$ 
This set  {obviously has} a finite rectangular structure.

\begin{figure}[htb]
\includegraphics[scale=1.1]{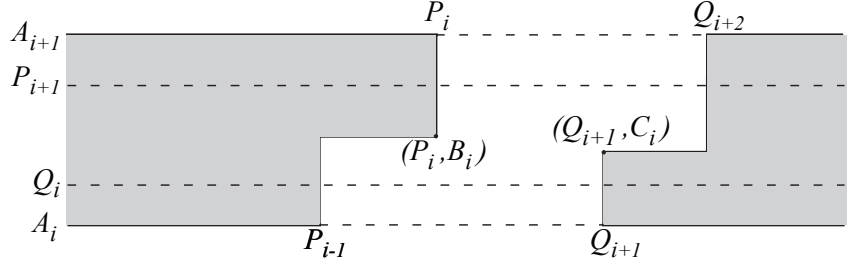}
\caption{Strip $y\in [A_i,A_{i+1}]$ of $\Omega_{\bar A}$}
\label{fig:strip}
\end{figure}

We will prove the desired properties of the set $\Omega_{\bar A}$ stated in Theorem \ref{main}: property (1) (Theorem \ref{thm:bij}) and property (2) (Theorem \ref{th:reduction}).

\begin{rem} Alternatively, the domain of bijectivity of $F_{\bar A}$ can be constructed using an approach first described by of I. Smeets in her thesis \cite{Sm}: start with the known domain $\Omega_{\bar P}$ of the Bowen-Series map $F_{\bar P}$ and modify it by an infinite ``quilting process" by adding and deleting rectangles where the maps $F_{\bar A}$ and $F_{\bar P}$ differ. In the case of short cycles the ``quilting process" gives exactly the region 
$\Omega_{\bar A}$, but unfortunately, it does not work when the cycles are longer. 
Since in the short cycles case the domain $\Omega_{\bar A}$ can be described explicitly, we
do not go into the details of the ``quilting process" here.
\end{rem}

\begin{thm}\label{thm:bij}
The map $F_{\bar A}: \Omega_{\bar A}\ra \Omega_{\bar A}$ is one-to-one and onto.
\end{thm}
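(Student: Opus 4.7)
The plan is to mimic the proof of Theorem~\ref{thm:BS}, replacing the partition $\bar P$ by $\bar A$ and the $y$-levels $Q_i,P_{i+1}$ by the new $y$-levels $B_i,C_i$. First I would slice $\Omega_{\bar A}$ into the $8g-4$ horizontal strips $S_i := \Omega_{\bar A}\cap \bigl(\Sb\times [A_i,A_{i+1})\bigr)$, on each of which $F_{\bar A}$ acts as $(T_i,T_i)$. The goal is to show that $T_i(S_i)$ is a full vertical strip of $\Omega_{\bar A}$ whose $x$-range is $[Q_{\sigma(i)},Q_{\sigma(i)+1}]$ (matching the Bowen--Series computation of Theorem~\ref{thm:BS}), and that, as $i$ varies, these vertical strips tile $\Omega_{\bar A}$ up to a set of measure zero.

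Concretely, the strip $S_i$ decomposes (as in the BS case) into two or three axis-aligned rectangles whose $y$-boundaries come from $\{A_i,B_i,C_i,A_{i+1}\}$ and whose $x$-boundaries come from $\{P_j,Q_j\}\cup\{B_j,C_j\}$. The images under $T_i$ of the $P_j,Q_j$ endpoints are read off directly from Proposition~\ref{rem1}, reproducing precisely the endpoints $\{P_{\sigma(i)\pm1},Q_{\sigma(i)},Q_{\sigma(i)\pm1},Q_{\sigma(i)+2}\}$ that arose in the BS proof. For the $B_i,C_i$ boundary points, which by definition are of the form $T_{\sigma(\bullet)}A_{\bullet}$, I would apply the short cycle identity~\eqref{shortcycle2} together with the formulas $B_i=T_{\sigma(i-1)}A_{\sigma(i-1)}$ and $C_i=T_{\sigma(i+1)}A_{\sigma(i+1)+1}$ to rewrite each $T_i B_j$ and $T_i C_j$ as another point of the form $T_{\bullet}A_{\bullet}$, and then identify that point with the appropriate $B_k$ or $C_k$ sitting on the prescribed step-function boundary of $\Omega_{\bar A}$. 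Since $\sigma$ is a bijection of $\{1,\dots,8g-4\}$ and the intervals $[Q_{\sigma(i)},Q_{\sigma(i)+1}]$ tile $\Sb$, the resulting vertical images will cover $\Omega_{\bar A}$ exactly once.

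I expect the main obstacle to be the index bookkeeping: verifying that every $T_i$-image of a corner point of $S_i$ lands \emph{exactly} on a corner of $\Omega_{\bar A}$, not merely close to one. This is precisely where the short cycle hypothesis is indispensable --- it is equivalent to the one-step matching $f_{\bar A}(T_i A_i)=f_{\bar A}(T_{i-1}A_i)$, which forces the two sides of the step-function boundary of $\Omega_{\bar A}$ to close up after a single iteration of $F_{\bar A}$. Without this hypothesis (i.e.\ in the longer-cycle setting) the attractor would have a more elaborate boundary involving the full cycle iterates $f_{\bar A}^{k}(T_{\bullet}A_{\bullet})$ and the clean one-step matching above would fail; this is also the reason why the Smeets ``quilting'' construction mentioned in the preceding remark terminates only in the short-cycle case.
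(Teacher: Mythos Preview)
Your plan is essentially the paper's proof, but one expectation needs correcting: unlike the Bowen--Series case, $T_i(S_i)$ is \emph{not} by itself a full vertical strip of $\Omega_{\bar A}$. Decomposing $S_i$ into the central rectangle $\tilde S_i=[Q_{i+2},P_{i-1}]\times[A_i,A_{i+1}]$ and the two corner rectangles $S_i^u=[P_{i-1},P_i]\times[B_i,A_{i+1}]$, $S_i^\ell=[Q_{i+1},Q_{i+2}]\times[A_i,C_i]$ (note the $x$-levels of $\Omega_{\bar A}$ are only the $P_j,Q_j$; the $B_j,C_j$ occur only as $y$-levels), one finds that $T_i(\tilde S_i)$ is the full vertical strip over $[Q_{\sigma(i)},P_{\sigma(i)+1}]$, but $T_i(S_i^u)$ and $T_i(S_i^\ell)$ land over $[P_{\sigma(i)+1},Q_{\sigma(i)+1}]$ and $[P_{\sigma(i)},Q_{\sigma(i)}]$ and fill only \emph{part} of those vertical strips.

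The complementary part of the strip over $[P_{\sigma(i)+1},Q_{\sigma(i)+1}]$ is supplied by $T_j(S_j^\ell)$ for the index $j$ with $\sigma(j+1)=\sigma(i-1)-1$, and the short cycle identity enters precisely here: it gives $T_iB_i=T_jC_j$, so the two images abut along a common horizontal segment with no gap or overlap. Thus the short cycle hypothesis is used not to land corner images on pre-existing corners of $\Omega_{\bar A}$, but to match corner images coming from \emph{different} horizontal strips. With this adjustment your outline goes through exactly as in the paper.
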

\begin{proof}
We investigate how different regions of $\Omega_{\bar A}$ are mapped by $F_{\bar A}$. More precisely we look at the strip $S_i$  of $\Omega_{\bar A}$ given by $y\in [A_i,A_{i+1}]$, and its image under $F_{\bar A}$, in this case $T_i$. See Figure \ref{fig:A-bij}.
We consider the following decomposition of this strip: $\tilde S_i=[Q_{i+2},P_{i-1}]\times [A_i,A_{i+1}]$ (red rectangular piece),
$S_i^\ell=[Q_{i+1},Q_{i+2}]\times [A_i, {C_i}]$ (blue lower corner) and $S_i^u=[P_{i-1},P_i]\times
[{B_i},A_{i+1}]$
 (green upper corner). Now
\begin{align}
T_i(\tilde S_i)& =T_i([Q_{i+2},P_{i-1}]\times [A_i,A_{i+1}])=[Q_{\sigma(i)},P_{\sigma(i)+1}]\times [B_{\sigma(i)+1},C_{\sigma(i)-1}]\\
T_i(S_i^\ell) & = T_i( [Q_{i+1},Q_{i+2}]\times [A_i, {C_i}])=[P_{\sigma(i)},Q_{\sigma(i)}]\times [B_{\sigma(i)+1}, T_i{C_i}]\\
T_i(S_i^u) & =T_i([P_{i-1},P_i]\times
[{B_i},A_{i+1}])=[P_{\sigma(i)+1},Q_{\sigma(i)+1}]\times [T_i{B_i},C_{\sigma(i)-1}]
\end{align}

\begin{figure}[htb]
\includegraphics[scale=1]{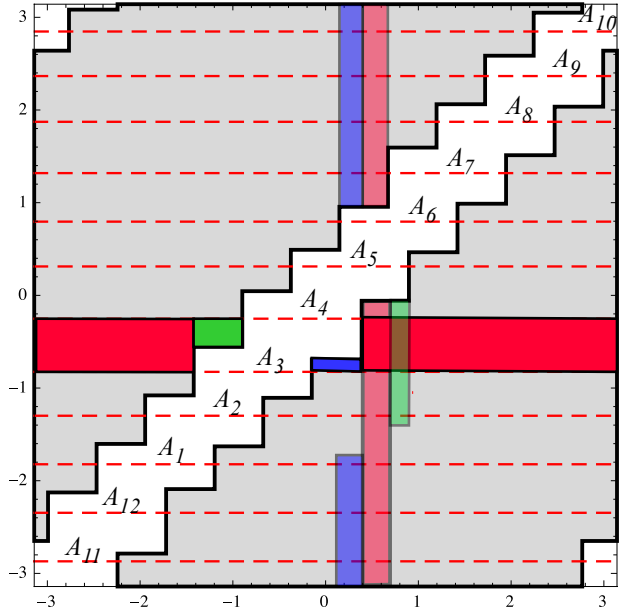}
\caption{Bijectivity of the $F_{\bar A}$ map}
\label{fig:A-bij}
\end{figure}

Notice that 
\begin{itemize}
\item $T_i(\tilde S_i)$ is a complete vertical strip in $\Omega_{\bar A}$, $Q_{\sigma(i)}\le x\le P_{\sigma(i)+1}$;
\item $T_i(S_i^u)$ together with $T_j(S_j^\ell)$ (where ${\sigma(j+1)=\sigma(i-1)-1}$) form a complete vertical strip in  $\Omega_{\bar A}$, $P_{\sigma(i)+1}\le x\le Q_{\sigma(i)+1}$. (We are using here the short cycle property $T_iT_{\sigma(i-1)}A_{\sigma(i-1)}=T_jT_{\sigma(j+1)}A_{\sigma(j+1)+1}$.)
\item  $T_i(S_i^\ell)$ together with $T_k(S_k^u)$ (where $\sigma(k)+1=\sigma(i)$) form a complete vertical strip in  $\Omega_{\bar A}$, $P_{\sigma(i)}\le x\le Q_{\sigma(i)}$.
\end{itemize}
This proves the bijectivity property of $F_{\bar A}$ on $\Omega_{\bar A}$.
\end{proof}

{We showed that  the ends of the cycles do not appear as $y$-levels of the boundary of $\Omega_{\bar A}$. We state this important property as a corollary.}
\begin{cor} For $i$ and $j$ related via  $\sigma(j+1)=\sigma(i-1)-1$, we have
\begin{equation}\label{eq:ij}
T_jC_j=T_iB_i\in[B_{\rho(i)+1},C_{\theta(i)}]=[B_{\rho(j)},C_{\theta(j)-1}].
\end{equation}
\end{cor}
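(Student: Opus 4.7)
The plan is to deduce both parts of the corollary directly from the short cycle property at the partition point $A_k$ with $k=\sigma(i-1)$, combined with elementary bookkeeping in the involution $\sigma$. The first step is to rewrite the hypothesis $\sigma(j+1)=\sigma(i-1)-1$ as $\sigma(j+1)=k-1$, whence $j=\theta(k-1)$, and to observe that $\rho(k)=\sigma(k)+1=i$ and that a direct parity check (in the style of the proof of Lemma~\ref{lem:rel1}) gives $\sigma(j)=\rho(i)$. Unraveling the definitions yields $B_i=T_{\sigma(i-1)}A_{\sigma(i-1)}=T_kA_k$ and $C_j=T_{\sigma(j+1)}A_{\sigma(j+1)+1}=T_{k-1}A_k$.

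The equality $T_iB_i=T_jC_j$ then follows from applying identity \eqref{shortcycle2} at index $k$, namely $T_{\rho(k)}T_k=T_{\theta(k-1)}T_{k-1}$, evaluated at $A_k$: this gives $T_iT_kA_k=T_jT_{k-1}A_k$, which is precisely $T_iB_i=T_jC_j$. For the equality of the two bracketed intervals in \eqref{eq:ij}, a direct computation from the definitions of $B_l$ and $C_l$ would show $B_{\rho(i)+1}=T_{\sigma(\rho(i))}A_{\sigma(\rho(i))}=T_jA_j$ and $C_{\theta(i)}=T_{\sigma(\sigma(i))}A_{\sigma(\sigma(i))+1}=T_iA_{i+1}$; the matching identities $B_{\rho(j)}=T_jA_j$ and $C_{\theta(j)-1}=T_iA_{i+1}$ then follow from $\rho(j)=\rho(i)+1$ and $\theta(j)-1=\theta(i)$.

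For the containment $T_iB_i\in[T_jA_j,T_iA_{i+1}]$, I would invoke the short cycle property at $A_k$ one more time to localize the two relevant points: it places $B_i=T_kA_k\in(Q_{\rho(k)},A_{\rho(k)+1})=(Q_i,A_{i+1})$ and $C_j=T_{k-1}A_k\in(A_{\theta(k-1)},P_{\theta(k-1)+1})=(A_j,P_{j+1})$. Since each $T_l$ preserves the counter-clockwise cyclic order on $\Sb$, applying $T_j$ to the inclusion $C_j\in(A_j,P_{j+1})$ places $T_jC_j$ strictly CCW-after $T_jA_j$, while applying $T_i$ to $B_i\in(Q_i,A_{i+1})$ places $T_iB_i$ strictly CCW-before $T_iA_{i+1}$. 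Proposition~\ref{rem1} (in particular $T_iQ_i=Q_{\rho(i)+1}$ and $T_jP_{j+1}=P_{\sigma(j)-1}=P_{\rho(i)-1}$) then certifies that both images fall in a single short CCW arc joining $T_jA_j$ to $T_iA_{i+1}$, yielding the desired interval containment.

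The only real obstacle is carrying out the $\sigma,\rho,\theta$-bookkeeping to produce the identities $\rho(k)=i$, $\theta(k-1)=j$, $\sigma(j)=\rho(i)$, $\rho(j)=\rho(i)+1$, and $\theta(j)-1=\theta(i)$; once these are in hand, both the equality $T_iB_i=T_jC_j$ and the interval containment drop out of the short cycle identity together with the orientation-preserving property of the generators $T_l$.
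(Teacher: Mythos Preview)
Your proposal is correct and uses essentially the same ingredients as the paper. The paper does not give a separate proof of this corollary; it simply extracts the statement from the bijectivity argument (Theorem~\ref{thm:bij}), where the identity $T_iB_i=T_jC_j$ is exactly the short cycle relation $T_iT_{\sigma(i-1)}A_{\sigma(i-1)}=T_jT_{\sigma(j+1)}A_{\sigma(j+1)+1}$ invoked when showing that $T_i(S_i^u)$ and $T_j(S_j^\ell)$ glue to a complete vertical strip, and the containment is read off from the computed images $T_i(S_i^u)=[P_{\rho(i)},Q_{\rho(i)}]\times[T_iB_i,C_{\theta(i)}]$ and $T_j(S_j^\ell)=[P_{\sigma(j)},Q_{\sigma(j)}]\times[B_{\rho(j)},T_jC_j]$. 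Your direct argument---setting $k=\sigma(i-1)$, applying \eqref{shortcycle2} at $A_k$, and then localizing via the short cycle inclusions $B_i\in(Q_i,A_{i+1})$, $C_j\in(A_j,P_{j+1})$ together with Proposition~\ref{rem1}---is precisely the unpacked version of that extraction, with the $\sigma,\rho,\theta$-bookkeeping (in particular $\sigma(j)=\rho(i)$) made explicit.
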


\section{Trapping region}\label{s:trapping}
 In order to prove property (2) of  $\Omega_{\bar A}$, we enlarge it and prove the trapping property for the enlarged region first.
Let
$\Psi_{\bar A}=\Omega_{\bar A}\cup \mathcal D$, where \[\mathcal D=\bigcup_{i=1}^{8g-4} R_i  \text{ and }
R_i=[P_{i-1},P_i]\times[Q_i,B_i].\]
Notice that $\Psi_{\bar A}$ can be also expressed as $\Psi_{\bar A}=\Omega_{\bar P}\cup \mathcal A$, where
 $\mathcal A=\cup_{i=1}^{8g-4} [Q_{i+1},Q_{i+2}]\times[P_i,C_i]$. The $y$-levels of the upper part of $\Psi_{\bar A}$ are given by the $Q_i$'s and the   $y$-levels of the lower part of $\Psi_{\bar A}$ are given by the $C_i$'s.
 
 \begin{figure}[htb]
\includegraphics[scale=1]{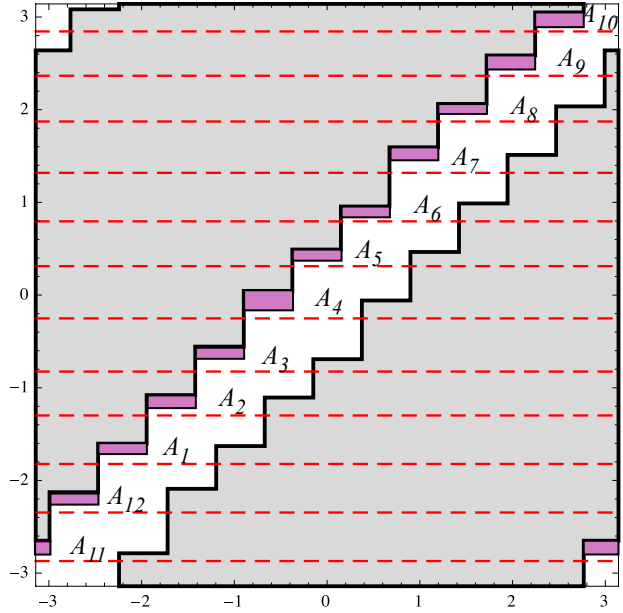}
\caption{Trapping region $\Psi_{\bar A}$ consisting of the set $\Omega_{\bar A}$ (grey) and the added set  $\mathcal D$ (purple)} 
\label{fig:purple}
\end{figure}

\begin{thm}\label{thm:trap}
The set $\Psi_{\bar A}$ is a trapping region for the map $F_{\bar A}$, i.e.,
\begin{itemize}
\item given any $(x,y) \in  \Sb\times \Sb\setminus \Delta$, there exists $n\ge 0$ such that  $F_{\bar A}^n(x,y)\in \Psi_{\bar A}$;
\item $F_{\bar A}(\Psi_{\bar A})\subset \Psi_{\bar A}$.
\end{itemize}
\end{thm}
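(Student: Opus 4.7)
The plan is to prove the two conclusions separately, starting with the forward invariance $F_{\bar A}(\Psi_{\bar A}) \subset \Psi_{\bar A}$, and then using that together with an expansiveness argument to establish the absorbing property.

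For forward invariance, fix an index $i$ and restrict $F_{\bar A}$ to the horizontal strip $\{(x,y) : y \in [A_i, A_{i+1})\}$, on which it acts as the single M\"obius transformation $T_i$. The intersection $\Psi_{\bar A} \cap \{y \in [A_i, A_{i+1})\}$ decomposes as $\tilde S_i \cup S_i^\ell \cup S_i^u \cup R_i$, where the first three pieces are those already analysed in the proof of Theorem \ref{thm:bij} and $R_i = [P_{i-1}, P_i] \times [Q_i, B_i]$. Theorem \ref{thm:bij} tells us that $T_i$ maps $\tilde S_i \cup S_i^\ell \cup S_i^u$ bijectively onto a union of vertical strips inside $\Omega_{\bar A} \subset \Psi_{\bar A}$, so the only new verification is $T_i R_i \subset \Psi_{\bar A}$. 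By Proposition \ref{rem1}, this image is the rectangle
\[
T_i R_i = [P_{\sigma(i)+1}, Q_{\sigma(i)+1}] \times [Q_{\sigma(i)+2}, T_i B_i],
\]
and the identity \eqref{eq:ij} places $T_i B_i$ inside $[B_{\rho(i)+1}, C_{\theta(i)}]$. A short index chase based on $\rho(i) = \sigma(i)+1$ and Lemma \ref{lem:rel1} identifies this rectangle as the (essentially disjoint) union of an upper-corner piece $S_k^u$ of $\Omega_{\bar A}$ and the adjacent rectangle $R_k$ of $\mathcal D$ in a neighbouring $y$-strip, yielding $T_i R_i \subset \Psi_{\bar A}$.

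The absorbing property then follows along the lines of Case I and Case II in the proof of Theorem \ref{thm:BS}. Given $(x,y) \in \Sb \times \Sb \setminus \Delta$, the expansiveness of $T_i$ inside its isometric circle forces, after finitely many iterations, the configuration $y_n \in [A_i, A_{i+1}) \subset [P_i, Q_{i+1}]$ together with $x_n \notin [P_i, Q_{i+1}]$. Such a pair automatically lies in $\tilde S_i \subset \Psi_{\bar A}$ unless $x_n$ falls into one of the two short exceptional arcs $[Q_{i+1}, Q_{i+2}]$ or $[P_{i-1}, P_i]$. In the first arc, the point is already in $S_i^\ell$ whenever $y_n \le C_i$, and for $y_n \in (C_i, A_{i+1})$ one applies $T_i$ once more, using the short cycle property to re-enter $\Omega_{\bar A}$. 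The second arc is symmetric: the enlargement by $R_i$ takes care of the range $y_n \in [Q_i, B_i]$, which is precisely the range not already covered by the upper corner $S_i^u \subset \Omega_{\bar A}$. An arc-length argument of the type used at the end of the proof of Theorem \ref{thm:BS} then rules out indefinite oscillation between these two exceptional arcs.

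The main obstacle is the forward-invariance step: one must check that $T_i R_i$ slots exactly into the union of an upper corner $S_k^u$ and the rectangle $R_k$ of the appropriate neighbouring strip, rather than straddling a strip boundary or overlapping a neighbouring piece. This is where the short cycle identity \eqref{eq:ij}, $T_j C_j = T_i B_i$ with $\sigma(j+1) = \sigma(i-1)-1$, becomes essential: it guarantees that the images coming from two different generators meet exactly at the common $y$-level $T_i B_i$. Once the index matches delivered by Lemma \ref{lem:rel1} are in place, both invariance and absorption reduce to elementary inclusions of M\"obius images of rectangles, entirely analogous to the Bowen-Series computations of Section \ref{s:BS}.
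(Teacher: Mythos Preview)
Your forward-invariance argument is correct in outline, but your description of $T_iR_i$ is inaccurate: the image $[P_{\sigma(i)+1},Q_{\sigma(i)+1}]\times[Q_{\sigma(i)+2},T_iB_i]$ is a tall vertical strip whose $y$-range runs through many horizontal slices of $\Omega_{\bar A}$, not merely a single upper corner $S_k^u$. What is actually true (and what the paper records in Section~\ref{s:reduction}, equations~\eqref{eq:TD1}--\eqref{eq:TD2}) is that $T_iR_i\setminus\Omega_{\bar A}=[P_{\rho(i)},Q_{\rho(i)}]\times[Q_{\rho(i)+1},B_{\rho(i)+1}]\subset R_{\rho(i)+1}$, which does yield the inclusion you want.

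The genuine gap is in the absorbing step. For the exceptional arc $x_n\in[P_{i-1},P_i]$ you assert that $R_i\cup S_i^u$ covers ``precisely the range not already covered by the upper corner'', but $R_i\cup S_i^u=[P_{i-1},P_i]\times[Q_i,A_{i+1}]$ while the full $y$-range of the strip is $[A_i,A_{i+1})$, and $A_i<Q_i$. The set $Y_i=[P_{i-1},P_i]\times[A_i,Q_i)$ therefore lies outside $\Psi_{\bar A}$ and is not addressed at all by your argument. Similarly, for the first exceptional arc, applying $T_i$ once to $Z_i=[Q_{i+1},Q_{i+2}]\times(C_i,A_{i+1}]$ does \emph{not} immediately re-enter $\Omega_{\bar A}$: there is a residual piece $[P_{\sigma(i)},Q_{\sigma(i)}]\times(C_{\theta(i)-1},C_{\theta(i)}]$ still outside $\Psi_{\bar A}$. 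The paper deals with both $Y_i$ and $Z_i$ through a type-a/type-b iteration analysis (Case~I and Case~II of the proof, paralleling Theorem~\ref{thm:BS} but with the $B_k$ and $C_k$ levels), and terminates the recursion via the arc-length contraction of Corollary~\ref{lem:app} after a specific type-b--then--type-a transition. Your closing sentence gestures toward such an argument, but as written it does not even recognise $Y_i$ as a region to be handled, and the claimed one-step absorption of $Z_i$ is false; you would need to carry out the detailed tracking to identify the residual rectangles and show why the combinatorics terminate.
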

\begin{proof}
 We start with $(x,y)\in \Sb\times \Sb\setminus \Delta$ and show that there exists $n\ge 0$ such that
$F_{\bar A}^n(x,y)\in \Psi_{\bar A}$. 
We have $Q_i\in  [A_i,P_{i+1})\subset [A_i,A_{i+1})$, and by the short cycle condition, 
${C_i}\in [A_i,P_{i+1})\subset [A_i,A_{i+1})$.

Consider $(x,y)\in \Sb\times \Sb\setminus \Delta$. Notice that there exists $n(x,y)>0$ such that the two values $x_n,y_n$ obtained from the $n$th iterate of $F_{\bar A}$,  $(x_n,y_n)=F^n_{\bar A}(x,y)$, are not inside the same isometric circle; in other words, $(x_n,y_n)\not\in X_i=[P_{i},Q_{i+1}]\times [A_{i},A_{i+1})$ for all  $1\le i\le 8g-4$ (see the argument in the proof of Theorem \ref{thm:BS}).

In order to prove the attracting property we need to analyze the situations $(x_n,y_n)\in Y_i=[P_{i-1},P_{i}] \times [A_{i},Q_i)$ (orange set), and $(x_n,y_n)\in Z_i=[Q_{i+1},Q_{i+2}]\times({C_i},A_{i+1}]$ (green set),  and show that a forward iterate lands in $\Psi_{\bar A}$.

\begin{figure}[htb]
\includegraphics[scale=1.2]{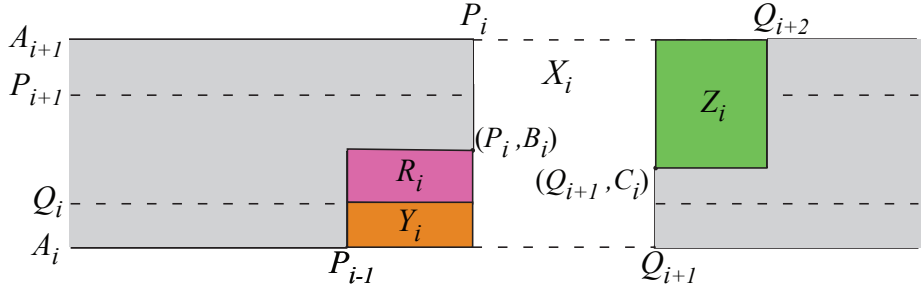}
\caption{The strip $y\in [A_i,A_{i+1}]$ of the trapping region $\Psi_{\bar A}$ together with the sets $Y_i=[P_{i-1},P_{i}] \times [A_{i},Q_i)$ (orange) and $Z_i=[Q_{i+1},Q_{i+2}]\times({C_i},A_{i+1}]$ (green) outside of it that require special considerations}
\label{fig:trap}
\end{figure}

\medskip

\noindent \textbf{Case (I)} If $(x_n,y_n)\in Y_i=[P_{i-1},P_{i}]\times [A_{i},Q_i)$, then\[
F_{\bar A}(x_n,y_n)\in [T_iP_{i-1},T_iP_i]\times [T_iA_i,T_iQ_i)=[P_{\rho(i)},Q_{\rho(i)}]\times[B_{\rho(i)},Q_{\rho(i)+1}).
\]
Since $B_{\rho(i)}\in [Q_{\rho(i)},A_{\rho(i)+1}]$, we need to analyze the regions
\[[P_{\rho(i)},Q_{\rho(i)}]\times[B_{\rho(i)},A_{\rho(i)+1}] \quad \text{ and }\quad [P_{\rho(i)},Q_{\rho(i)}]\times[A_{\rho(i)+1},Q_{\rho(i)+1}).\]

\smallskip

\noindent \textbf{(a)} If $(x_{n+1},y_{n+1})\in [P_{k},Q_{k}]\times[B_k,A_{k+1}]$, where $k=\rho(i)$, then 
\[
(x_{n+2},y_{n+2})=T_k(x_{n+1},y_{n+1})\in [Q_{\rho(k)},Q_{\rho(k)+1}]\times[T_kB_k,T_kA_{k+1}]\,.
\]
Since $T_kA_{k+1}=C_{\theta(k)}$, and $T_kB_k\in [B_{\rho(k)+1},C_{\theta(k)}]$ the only part of the vertical strip above where $(x_{n+2},y_{n+2})$ might still lie outside of $\Psi_{\bar A}$ is a subset of $[P_{\rho(k)+1},Q_{\rho(k)+1}]\times[B_{\rho(k)+1},Q_{\rho(k)+2})$.

Notice that $\rho(k)=\sigma(\sigma(i)+1)+1=4g+i-2$ (direct verification), so we need to analyze the situation $(x_{n+2},y_{n+2})\in [P_{4g+i-1},Q_{4g+i-1}]\times [B_{4g+i-1},Q_{4g+i})$.

\smallskip

\noindent \textbf{(b)} If  $(x_{n+1},y_{n+1})\in [P_k,Q_k]\times[A_{k+1},Q_{k+1})$, then
\[
(x_{n+2},y_{n+2})=T_{k+1}(x_{n+1},y_{n+1})\in [P_{\rho(k+1)},T_{k+1}Q_{k}]\times[B_{\rho(k+1)},Q_{\rho(k+1)+1})\,.
\]
Notice that $T_{k+1}Q_k\in [P_{\rho(k+1)},Q_{\rho(k+1)}]$  and $\rho(k+1)=\rho(\rho(i)+1)=i-1$ (direct verification) so we are left to investigate $(x_{n+2},y_{n+2})\in [P_{i-1},Q_{i-1}]\times [B_{i-1},Q_{i})$.

\medskip

To summarize, we started with $(x_{n+1},y_{n+1})\in [P_{\rho(i)},Q_{\rho(i)}]\times[B_{\rho(i)},Q_{\rho(i)+1})\,$ and found two situations that need to be analyzed $(x_{n+2},y_{n+2})\in [P_{i-1},Q_{i-1}]\times [B_{i-1},Q_{i})$ or  $(x_{n+2},y_{n+2})\in [P_{4g+i-1},Q_{4g+i-1}]\times [B_{4g+i-1},Q_{4g+i})$.

We prove in what follows that it is not possible for all future iterates $F^m(x_n,y_n)$  to belong to the sets of type $[P_k,Q_k]\times[B_{k},Q_{k+1})$.

 First, it is not possible for all $F^m(x_n,y_n)$ (starting with some $m>0$) to belong only to type-a sets 
$[P_{k_m},Q_{k_m}]\times[B_{k_m},A_{k_m+1}]$, where $k_{m+1}=\rho(k_m)+1$ because such a set is included in the isometric circle $X_{k_m}$, and the argument at the beginning of the proof disallows such a situation.

Also, it is not possible for all $F^m(x_n,y_n)$ (starting with some $m>0$) to belong only to type-b sets $[P_{k_m},Q_{k_m}]\times[A_{k_m+1},Q_{k_m+1})$, where $k_{m+1}=\rho(k_{m}+1)$: this would imply that the pairs of points $(y_{n+m},A_{k_{n+m}+1})$ (on the y-axis) will belong to the same interval $[A_{k_{n+m}},Q_{k_{n+m}+1})$ which is impossible due to expansiveness property of the map $f_{\bar A}$.

Therefore, there exists a pair  $(x_{l},y_{l})$ in the orbit of $F^m(x_n,y_n)$ such that 
$$(x_{l},y_{l})\in  [P_{j},Q_{j}]\times[A_{j+1},Q_{j+1}) \quad \text{ (type-b)}$$ for some $1\le j\le 8g-4$ and $$(x_{l+1},y_{l+1})\in [P_{j'},T_{j+1}Q_{j}]\times[T_{j+1}A_{j+1},P_{j'+1}] \subset [P_{j'},Q_{j'}]\times[Q_{j'},P_{j'+1}] \quad \text{ (type-a)},$$ where $j'=\rho(j+1)$. Then 
$$(x_{l+2},y_{l+2})\in T_{j'}([P_{j'},T_{j+1}Q_{j}] \times[Q_{j'},P_{j'+1}])=[Q_{j''},T_{j'}T_{j+1}Q_{j}] \times[Q_{j''+1},P_{j''-2}]$$
where $j''=\rho(j')$. 

Using the results of the Appendix (Corollary \ref{lem:app}), we have that the arc length distance   satisfies
$$\ell(P_{j'},T_{j+1}Q_j)=\ell(T_{j+1}P_j,T_{j+1}Q_j)<\frac{1}{2}\ell(P_{j'},Q_{j'}).$$
Now we can use Corollary \ref{new} (ii) applied to the point $T_{j+1}Q_j\in[P_{j'},Q_{j'}]$ to conclude that  $T_{j'}T_{j+1}Q_j\in [Q_{j''},P_{j''+1}]$. 
Therefore $(x_{l+2},y_{l+2})\in \Psi_{\bar A}$.

\bigskip

\noindent \textbf{Case (II)} If $(x_n,y_n)\in Z_i=[Q_{i+1},Q_{i+2}]\times(C_i,A_{i+1}]$,
then 
\[
F_{\bar A}(x_n,y_n)\in T_i ( [Q_{i+1},Q_{i+2}]\times (C_i,A_{i+1}])=[P_{\sigma(i)},Q_{\sigma(i)}]\times (T_iC_i,C_{\theta(i)}]. 
\]
Since
\(
T_iC_i\in [B_{\rho(i)},C_{\theta(i)-1}]
\)
by \eqref{eq:ij} and the set $[P_{\sigma(i)},Q_{\sigma(i)}]\times [B_{\rho(i)},C_{\theta(i)-1}]$ is in $\Psi_{\bar A}$, we are left with analyzing the situation 
$$(x_{n+1},y_{n+1})\in [P_{\sigma(i)},Q_{\sigma(i)}]\times (C_{\theta(i)-1},C_{\theta(i)}].
$$
This requires two subcases depending on $y_{n+1}\in  (C_{k-1}, A_{k})$ or  $y_{n+1}\in [A_{k}, C_{k}]$, where $k=\theta(i)$.

\smallskip

\noindent \textbf{(a)} If $(x_{n+1},y_{n+1})\in [P_{k+1},Q_{k+1}]\times (C_{k-1}, A_{k})$, then
$$(x_{n+2},y_{n+2})=T_{k-1}(x_{n+1},y_{n+1})\in [T_{k-1}P_{k+1},Q_{\sigma(k-1)}]\times (T_{k-1}C_{k-1}, T_{k-1}A_{k}).$$
Notice that $\sigma(k-1)=\sigma(\theta(i)-1)=i+2$ (direct verification). Since \[T_{k-1}P_{k+1}\in [P_{\sigma(k-1)},Q_{\sigma(k-1)})=[P_{i+2},Q_{i+2}),\] $T_{k-1}A_k=C_{\theta(k-1)}=C_{i+1}$ and $T_{k-1}C_{k-1}\in[B_{\rho(k-1)},C_{\theta(k-1)-1})=[B_{i+3},C_{i})$, we have that $(x_{n+2},y_{n+2})\in [P_{i+2},Q_{i+2})\times [B_{i+3},C_{i+1})$. The only part of this vertical strip where $(x_{n+2},y_{n+2})$ might still lie outside of $\Psi_{\bar A}$ is a subset of $[P_{i+2},Q_{i+2}]\times(C_i,C_{i+1})$, and that is the situation we need to analyze.
\smallskip

\noindent \textbf{(b)} If  $(x_{n+1},y_{n+1})\in [P_{k+1},Q_{k+1}]\times[A_{k},C_{k}]$, then
\[
\begin{aligned}
(x_{n+2},y_{n+2})\in T_{k}\left ( [P_{k+1},Q_{k+1}]\times[A_{k},C_{k}]\right)
=[P_{\sigma(k)-1},P_{\sigma({k})}]\times[B_{\rho(k)},T_{k}C_{k}].
\end{aligned}
\]
Since $T_kC_k\in [B_{\rho(k)},C_{\theta(k)-1}]$ by \eqref{eq:ij} and $\sigma(k)=\sigma(\theta(i))=4g+i-1$, then $$(x_{n+2},y_{n+2})\in [P_{4g+i-2},P_{4g+i-1}]\times [B_{4g+i},C_{4g+i-3}]$$ and the only part of this vertical strip where $(x_{n+2},y_{n+2})$ might still lie outside of $\Psi_{\bar A}$ is a subset of $[P_{4g+i-2},Q_{4g+i-2}]\times [A_{4g+i-3},C_{4g+i-3}]$.

\smallskip

To summarize, we started with $(x_{n+1},y_{n+1})\in [P_{\sigma(i)},Q_{\sigma(i)}]\times (C_{\theta(i)-1},C_{\theta(i)}]$ and found two situations that need to be analyzed $(x_{n+2},y_{n+2})\in [P_{i+2},Q_{i+2}]\times (C_{i},C_{i+1}]$ or  $(x_{n+2},y_{n+2})\in [P_{4g+i-2},Q_{4g+i-2}]\times [A_{4g+i-3},C_{4g+i-3}]$.

\smallskip

We prove that it is not possible for all future iterates $F^m(x_n,y_n)$  to belong to the sets of type $[P_{k+1},Q_{k+1}]\times[C_{k-1},C_{k}]$.

First, it is not possible for all $F^m(x_n,y_n)$ (starting with some $m>0$) to belong only to type-a sets $[P_{k_m+1},Q_{k_m+1}]\times(C_{k_m-1},A_{k_m})$, where $k_{m+1}=\sigma(k_m-1)$: this would imply that the pairs of points $(y_{n+m},A_{k_{n+m}})$ (on the $y$-axis) will belong to the same interval $(C_{k_{n+m}-1},A_{k_{n+m}}]\subset [A_{k_{n+m}-1},A_{k_{n+m}}]$ which is impossible due to expansiveness property of the map $f_{\bar A}$ on such intervals.

From the discussion of Case (b), if an iterate $F^m(x_n,y_n)$ belongs to a type-b set, then $F^{m+1} (x_n,y_n)$ either belongs to $\Psi_{\bar A}$ or to another type-b set. However, it is not possible for all iterates $F^m(x_n,y_n)$ (starting with some $m>0$) to belong to type-b sets 
$[P_{k_m+1},Q_{k_m+1}]\times[A_{k_m},C_{k_m}]$, where $k_{m+1}=\sigma(k_m)-2$ because such a set is included in the isometric circle $X_{k_m}$, and the argument at the beginning of the proof disallows such a situation. Thus, once an iterate $F^m(x_n,y_n)$ belongs to a type-b set, then it will eventually belong to $\Psi_{\bar A}$.

We showed that any point $(x,y)$ that belongs to a set $[P_{k+1},Q_{k+1}]\times (C_{k-1},C_k]$ will  have a future iterate in $\Psi_{\bar A}$. This completes the proof of Case II and, hence, the theorem.
\end{proof}

\section{Reduction theory}\label{s:reduction} We can now complete the proof of Theorem \ref{main}. 
\begin{thm}\label{th:reduction} For almost every point $(x,y)\in \Sb\times \Sb\setminus \Delta$, there exists $K>0$ such that $F_{\bar A}^K(x,y)\in \Omega_{\bar A}$, and the set $\Omega_{A}$ is a global attractor for $F_{\bar A}$, i.e., $$\Omega_{A}=\bigcap_{n=0}^\infty F_{\bar A}^n(\Sb\times \Sb\setminus \Delta).$$
\end{thm}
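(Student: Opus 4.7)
The strategy is to derive both claims from Theorem~\ref{thm:trap} (trapping into $\Psi_{\bar A}=\Omega_{\bar A}\cup\mathcal D$), Theorem~\ref{thm:bij} (forward invariance $F_{\bar A}(\Omega_{\bar A})=\Omega_{\bar A}$), and one additional reduction argument that handles the extra rectangles $R_i\subset \mathcal D=\Psi_{\bar A}\setminus\Omega_{\bar A}$. Once it is shown that the orbit of almost every point in $\mathcal D$ eventually enters $\Omega_{\bar A}$, the forward absorption statement follows, and the attractor identity is then deduced from the nested decreasing sequence $F_{\bar A}^{n+1}(E)\subseteq F_{\bar A}^n(E)$ for $E=\Sb\times\Sb\setminus\Delta$.

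For a point in $R_i=[P_{i-1},P_i]\times[Q_i,B_i]$, the short cycle property gives $B_i=T_{\sigma(i-1)}A_{\sigma(i-1)}\in(Q_i,A_{i+1})$ (apply $T_kA_k\in(Q_{\rho(k)},A_{\rho(k)+1})$ with $k=\sigma(i-1)$, for which $\rho(k)=i$), so $y\in[A_i,A_{i+1})$ and one iterate gives
\[
T_i(R_i)=[P_{\rho(i)},Q_{\rho(i)}]\times[Q_{\rho(i)+1},T_iB_i],
\]
where $T_iB_i=c_{A_{\sigma(i-1)}}$ is a cycle endpoint lying in $[B_{\rho(i)+1},C_{\theta(i)}]$ by relation~\eqref{eq:ij}. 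Comparing with the finite rectangular description of $\Omega_{\bar A}$ from Section~\ref{s:bijectivity} shows that this image is contained in the union of $\Omega_{\bar A}$ with at most one further $R_j$ of strictly smaller $y$-height (controlled by the cycle combinatorics). Iterating and using the expansion of $f_{\bar A}$ on $[Q_j,B_j]$ via the isometric-circle estimates in Corollary~\ref{new}, the orbit must leave $\mathcal D$ after finitely many iterations unless its $y$-coordinate falls in the $f_{\bar A}$-preimage of the countable set of cycle endpoints, a Lebesgue-null exceptional set.

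For the attractor identity, $\Omega_{\bar A}\subseteq\bigcap_n F_{\bar A}^n(E)$ is immediate from forward invariance. For the reverse, $F_{\bar A}^n(E)$ is nested decreasing with $\mathrm{Leb}\bigl(F_{\bar A}^n(E)\setminus\Omega_{\bar A}\bigr)\to 0$ by the preceding step; any hypothetical $(x,y)\in\bigcap_n F_{\bar A}^n(E)\setminus\Omega_{\bar A}$ would force pre-image chains of arbitrary length inside the exceptional set, which is ruled out by the forward invariance of $\Psi_{\bar A}$ together with the rigidity of the cycle-endpoint structure. The main obstacle is the orbit tracking of the second paragraph: checking case-by-case that the image of each $R_i$ decomposes cleanly into an $\Omega_{\bar A}$-part plus at most one strictly smaller residual, and quantifying the $y$-expansion so as to terminate the iteration. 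This parallels Case~II in the proof of Theorem~\ref{thm:BS}, but is geometrically more delicate because the corners of $\Omega_{\bar A}$ now lie at the cycle endpoints $B_i,C_i$ rather than at the isometric-circle endpoints $P_i,Q_i$.
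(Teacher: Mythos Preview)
Your overall architecture is right: reduce to the trapping region, then control the residual rectangles $R_i=[P_{i-1},P_i]\times[Q_i,B_i]$. Your computation of $T_i(R_i)=[P_{\rho(i)},Q_{\rho(i)}]\times[Q_{\rho(i)+1},T_iB_i]$ and the use of \eqref{eq:ij} to locate $T_iB_i$ are also correct. But the next step contains a genuine error: the residual $F_{\bar A}(R_i)\setminus\Omega_{\bar A}$ is \emph{not} a rectangle of strictly smaller $y$-height. It is exactly $[P_{\rho(i)},Q_{\rho(i)}]\times[Q_{\rho(i)+1},B_{\rho(i)+1}]$, which has the \emph{full} $y$-interval of $R_{\rho(i)+1}$. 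No contraction whatsoever occurs in the $y$-direction at the level of sets; the cycle combinatorics do not make the $y$-height shrink. The mechanism that drives the attractor identity is contraction in the $x$-direction: after one more iteration the $x$-interval becomes $[P_{\rho(j)},T_jQ_{j-1}]$, and Corollary~\ref{lem:app} (not Corollary~\ref{new}) gives $\ell(P_{\rho(j)},T_jQ_{j-1})<\tfrac12\,\ell(P_{\rho(j)},Q_{\rho(j)})$. Iterating, $F_{\bar A}^{\,n}(\mathcal D)\setminus\Omega_{\bar A}=\bigcup_k[P_{k-1},S^{(n)}_{k-1}]\times[Q_k,B_k]$ with $x$-width $\lesssim 2^{-(n-1)}$, so the intersection collapses to $\bigcup_k\{P_{k-1}\}\times[Q_k,B_k]\subset\partial\Omega_{\bar A}$. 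Your measure-and-preimage-chain argument for the reverse inclusion does not substitute for this: knowing that $\mathrm{Leb}\bigl(F_{\bar A}^n(E)\setminus\Omega_{\bar A}\bigr)\to 0$ still allows a null set to survive in the intersection, and your ``rigidity of the cycle-endpoint structure'' does not pin it down.

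For the almost-every absorption, your idea of using the expansion of $f_{\bar A}$ in the $y$-variable is the right one, but the exceptional set is misidentified. The obstruction is not the preimage of the cycle endpoints $c_{A_i}$; it is the preimage of the segments $\{y=Q_k\}$. Indeed, along a non-escaping orbit one has $y_{n+1}=T_{k_n}y_n$ with $y_n\in(Q_{k_n},B_{k_n}]$ and $T_{k_n}Q_{k_n}=Q_{k_{n+1}}$, so uniform expansion of $T_{k_n}$ on a subset of its isometric circle forces $y_n\to Q_{k_n}$, which is impossible unless $y=Q_i$ to begin with. Thus the null exceptional set is $\bigcup_i[P_{i-1},P_i]\times\{Q_i\}$ together with its $F_{\bar A}$-preimages.
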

\begin{proof} 

By Theorem \ref{thm:trap}, every point $(x,y)\in \Sb\times \Sb\setminus \Delta$ is mapped to the trapping region $\Psi_{\bar A}=\Omega_{\bar A}\cup \mathcal D$ by some iterate $F_{\bar A}^n$. Therefore, it suffices to track the set $\mathcal D=\bigcup_{i=1}^{8g-4} R_i$. The image of each rectangle $R_i=[P_{i-1}, P_i]\times[Q_i,B_i]$ under $F_ {\bar A}$, $F_{\bar A}(R_i)=T_i(R_i)$, is a rectangular set
\begin{equation}\label{eq:TD1}
F_{\bar A}(R_i)=[T_iP_{i-1}, T_iP_i]\times[T_iQ_i, T_iB_i]=[P_{\rho(i)}, Q_{\rho(i)}]\times[Q_{\rho(i)+1}, T_iB_i].
\end{equation}
The ``top" of this rectangle, $[P_{\rho(i)}, Q_{\rho(i)}]\times \{T_iB_i\}$ is inside $\Omega_{\bar A}$, since $T_iB_i\in [B_{\rho(i)+1},C_{\theta(i)}]$.
Moreover, 
\begin{equation}\label{eq:TD2}
F_{\bar A}(R_i)\setminus \Omega_{\bar A}=[P_{\rho(i)}, Q_{\rho(i)}]\times[Q_{\rho(i)+1}, B_{\rho(i)+1}]\subset R_{\rho(i)+1},
\end{equation}
so, by letting $j=\rho(i)+1$,
\[
F_{\bar A}(\mathcal D)\setminus \Omega_{\bar A}=\bigcup_{j=1}^{8g-4}[P_{j-1}, Q_{j-1}]\times[Q_j,B_j]
\]
and
\[F_{\bar A}(\Omega_{\bar A}\cup \mathcal D)=\Omega_{\bar A}\cup \bigcup_{j=1}^{8g-4}[P_{j-1}, Q_{j-1}]\times[Q_j,B_j].\]
Now the image of the rectangular set $[P_{j-1}, Q_{j-1}]\times[Q_j,B_j]$ under $F_{\bar A}(=T_j)$ is
\[
F_{\bar A}([P_{j-1}, Q_{j-1}]\times[Q_j,B_j])=[P_{\rho(j)},T_jQ_{j-1}]\times[Q_{\rho(j)+1}, T_jB_j],
\] hence
\begin{equation}\label{eq:fa}
F_{\bar A}\big(F_{\bar A}(\mathcal D))\setminus \Omega_{\bar A}=\bigcup_{j=1}^{8g-4}[P_{\rho(j)},T_jQ_{j-1}]\times[Q_{\rho(j)+1}, B_{\rho(j)+1}].
\end{equation}
Corollary \ref{lem:app} tells us that the length of the segment $[P_{\rho(j)},T_jQ_{j-1}]=T_j([P_{j-1},Q_{j-1}])$ is less than $\frac{1}{2}$ of $[P_{\rho(j)},Q_{\rho(j)}]$. If we let $k=\rho(j)+1$, and denote $T_jQ_{j-1}$ by $S^{(2)}_k$, then \eqref{eq:fa} becomes
\[
F^{2}_{\bar A}(\mathcal D)\setminus \Omega_{\bar A}=\bigcup_{k=1}^{8g-4}[P_{k-1},S^{(2)}_{k-1}]\times[Q_k, B_k] 
\]
with the length of the segment $[P_{k-1},S^{(2)}_{k-1}]$ being less than $\frac{1}{2}$ of $[P_{k-1},Q_{k-1}]$. Inductively, it follows that:
\begin{equation}\label{eq:fn}
F^{n}_{\bar A}(\mathcal D)\setminus \Omega_{\bar A}=\bigcup_{k=1}^{8g-4}[P_{k-1},S^{(n)}_{k-1}]\times[Q_k, B_k] 
\end{equation}
where the length of the segment $[P_{k-1},S^{(n)}_{k-1}]$ is less than $\frac{1}{2^{n-1}}$ of $[P_{k-1},Q_{k-1}]$.  Thus, 
\[
F^{n}_{\bar A}(\Omega_{\bar A}\cup \mathcal D)=\Omega_{\bar A}\cup \bigcup_{k=1}^{8g-4}[P_{k-1},S^{(n)}_{k-1}]\times[Q_k, B_k] 
\]
and
\begin{align*}
\bigcap_{n=0}^\infty F_{\bar A}^n(\Sb\times \Sb\setminus \Delta)&=\bigcap_{n=0}^\infty F_{\bar A}^n(\Omega_A\cup \mathcal D)=\Omega_{\bar A}\cup\bigcap_{n=0}^\infty \left(\bigcup_{k=1}^{8g-4}[P_{k-1},S^n_{k-1}]\times[Q_k, B_k]\right)\\
&=\Omega_{\bar A}\cup \bigcup_{k=1}^{8g-4}\{P_{k-1}\}\times[Q_k, B_k]=\Omega_{\bar A}
\end{align*}

In what follows, we will show that any point $(x,y)\in \mathcal D$ (see Figure \ref{fig:purple}) is actually mapped to $\Omega_{\bar A}$ after finitely many iterations with the exception of the Lebesgue measure zero set consisting of the union of horizontal segments $\bigcup_{i=1}^{8g-4} [P_{i-1}, P_{i}]\times \{Q_i\}$ and their preimages. For that, let $(x,y)\in R_i$ with $y\ne Q_i$ 
and assume that $F_{\bar A}^n(x,y)=(x_n,y_n)\in F^{n}_{\bar A}(\mathcal D)\setminus \Omega_{\bar A}$. Using \eqref{eq:fn}, this means that the sequence of points $y_n\in (Q_{k_n},B_{k_n}]$ for all $n\ge 1$. But $y_{n+1}=T_{k_n}y_n$, $Q_{k_n+1}=T_{k_n}Q_{k_n}$ and the map $T_{k_n}$ is (uniformly) expanding on $[Q_{k_n},B_{k_n}]$ (a subset of the isometric circle of $T_{k_n}$), which contradicts the assumption  $y_n\in (Q_{k_n},B_{k_n}]$.
\end{proof}

\section{Invariant measures}\label{s:entropy}

It is a standard computation that the measure $d\nu=\displaystyle\frac{|dx|\,|dy|}{|x-y|^2}$ is preserved by M\"obius transformations applied to unit circle variables $x$ and $y$, hence by $F_{\bar A}$. 
Therefore, $F_{\bar A}$ preserves the smooth probability measure
\begin{equation}\label{dnu}
d\nu_{\bar A}=\frac{1}{K_{\bar A}}d\nu, \text{ where } K_{\bar A}=\int_{\Omega_{\bar A}}d\nu.
\end{equation}
Alternatively, by considering $F_{\bar A}$ as a reduction map acting on geodesics, the invariant measure can be derived more elegantly by using the geodesic flow on the hyperbolic disk and the Poincar\'e cross-section maps, but we are not pursuing that direction here.

In what follows, we compute $K_{\bar A}$ for the  case when $\bar A$ satisfies the short cycle property. Recall that the domain $\Omega_{\bar A}$ was described in the proof of Theorem \ref{thm:bij} as:
\begin{equation}\label{eq:oma}
\Omega_{\bar A}=\bigcup_{i=1}^{8g-4}[Q_{i+2},P_{i-1}]\times[A_i,A_i+1]\cup [Q_{i+1},Q_{i+2}]\times [A_i,C_i]\cup [P_{i-1},P_{i}]\times[B_i,A_{i+1}]. 
\end{equation}
\begin{prop}\label{prop:KA}
If the points $A_i$ satisfy the short cycle property and $p_i,q_i,b_i,c_i$ represent the angular coordinates of $P_i$, $Q_i$, $B_i=T_iA_i$, and $C_i=T_{i-1}A_i$, respectively, then
\begin{equation}\label{eq:KA}
\nu(\Omega_A)=K_A=\ln\prod_{i=1}^{8g-4}\frac{|\sin\left(\frac{c_i-q_{i+2}}2\right)| |\sin\left(\frac{b_i-p_{i-1}}2\right)|}{|\sin\left(\frac{b_i-p_i}2\right)| |\sin\left(\frac{c_i-q_{i+1}}2\right)|}.
\end{equation}
\end{prop}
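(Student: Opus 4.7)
The plan is to reduce the calculation to a single formula for $\nu$ on an arbitrary rectangle in angular coordinates, apply it to each piece in the decomposition \eqref{eq:oma} of $\Omega_{\bar A}$, and then exhibit a telescoping cancellation that eliminates all dependence on the partition points $a_i$.

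First, parametrize the circle by angles $x=e^{i\alpha}$, $y=e^{i\beta}$, so that $|x-y|=2|\sin((\alpha-\beta)/2)|$ and $|dx|=d\alpha$, $|dy|=d\beta$. Then
\[
d\nu=\frac{d\alpha\,d\beta}{4\sin^{2}\!\left(\tfrac{\alpha-\beta}{2}\right)}.
\]
Using $\int\csc^{2}u\,du=-\cot u$ for the inner integral and $\int\cot u\,du=\ln|\sin u|$ for the outer one, a direct computation yields the rectangle formula
\[
\nu\bigl([a,b]\times[c,d]\bigr)=\ln\left|\frac{\sin\tfrac{b-d}{2}\;\sin\tfrac{a-c}{2}}{\sin\tfrac{b-c}{2}\;\sin\tfrac{a-d}{2}}\right|.
\]
This is the only analytic input needed; the rest is combinatorial bookkeeping.

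Apply this formula to each of the three rectangles in the $i$-th strip of \eqref{eq:oma}: the central rectangle $[q_{i+2},p_{i-1}]\times[a_i,a_{i+1}]$, the lower corner $[q_{i+1},q_{i+2}]\times[a_i,c_i]$, and the upper corner $[p_{i-1},p_i]\times[b_i,a_{i+1}]$. Summing the resulting twelve logarithmic factors over $i=1,\dots,8g-4$ (indices mod $8g-4$), I collect all terms containing a partition-point coordinate $a_i$ or $a_{i+1}$. At each $i$ the net $a_i$-contribution from pieces 1 and 2 equals $\ln|\sin\tfrac{q_{i+1}-a_i}{2}|-\ln|\sin\tfrac{p_{i-1}-a_i}{2}|$, while the $a_{i+1}$-contribution from pieces 1 and 3 equals $\ln|\sin\tfrac{p_i-a_{i+1}}{2}|-\ln|\sin\tfrac{q_{i+2}-a_{i+1}}{2}|$. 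Reindexing the second sum by $i\mapsto i-1$ turns it into the negative of the first, so the combined $a_i$-contribution vanishes identically.

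What remains are precisely the $b_i$-factors from the upper corner and the $c_i$-factors from the lower corner:
\[
K_{\bar A}=\sum_{i=1}^{8g-4}\left(\ln\left|\frac{\sin\tfrac{c_i-q_{i+2}}{2}}{\sin\tfrac{c_i-q_{i+1}}{2}}\right|+\ln\left|\frac{\sin\tfrac{b_i-p_{i-1}}{2}}{\sin\tfrac{b_i-p_i}{2}}\right|\right),
\]
which, once combined into a single logarithm of a product, is exactly \eqref{eq:KA}. The main obstacle is purely combinatorial: tracking how each $a_i$ enters as argument of four sines (paired with $p_{i-1},p_i,q_{i+1},q_{i+2}$ up to the shift $i\mapsto i\pm1$) and verifying that after reindexing these contributions cancel in signed pairs. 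No use of the short cycle property itself is needed in the computation beyond the explicit description \eqref{eq:oma} of $\Omega_{\bar A}$ that it provides.
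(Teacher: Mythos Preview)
Your proof is correct and follows essentially the same approach as the paper: derive the closed-form cross-ratio expression for $\nu$ on a rectangle in angular coordinates, apply it to the three pieces of \eqref{eq:oma}, and observe that the $a_i$-terms cancel after reindexing. The paper's proof is identical in structure, though it is slightly less explicit than yours about the telescoping mechanism (it simply records the three products and remarks that ``the last equality is obtained due to cancellations'').
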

\begin{proof}
Since $\Omega_A$ is given by \eqref{eq:oma}, we have
\[
K_{\bar A}=\int_{\Omega_{\bar A}}d\nu=\sum_{i=1}^{8g-4}\left(\int_{Q_{i+2}}^{P_{i-1}}\int_{A_i}^{A_{i+1}}d\nu+\int_{Q_{i+1}}^{Q_{i+2}}\int_{A_i}^{C_i}d\nu+\int_{P_{i-1}}^{P_{i}}\int_{B_i}^{A_{i+1}}d\nu\right).
\]
In order to compute each of the three integrals above, we use angular coordinates $\theta$ and $\phi$ corresponding to $x=e^{i\theta}$, $y=e^{i\phi}$, and write for some arbitrary values $A,B,C,D$:
\begin{align*}
I_{A,B,C,D}:=\displaystyle\int_{A}^{B}\int_{C}^{D}\frac{|dx||dy|}{|x-y|^2}&=\displaystyle\int_{a}^{b}\int_{c}^{d}\frac{d\theta d\phi}{|\exp(i\theta)-\exp(i\phi)|^2}\\&=\int_{a}^{b}\int_{c}^{d}\frac{d\theta d\phi}{2-2\cos(\theta-\phi)}=:I_{a,b,c,d},
\end{align*}
where $a,b,c,d$ are the angular coordinates corresponding to $A,B,C,D$: 
\[
A=e^{ia}, B=e^{ib}, C=e^{ic}, D=e^{id}.
\]
The double integral (which we denoted by $I_{a,b,c,d}$) can be computed explicitly. First
\begin{equation}\label{ab}
\int_{a}^{b}\frac{d\theta}{2-2\cos(\theta-\phi)}=-\frac12\cot\left(\frac{\theta-\phi}2\right)\biggr\vert_{\theta=a}^{\theta=b}=\frac12\left(\cot\left(\frac{a-\phi}2\right)-\cot\left(\frac{b-\phi}2\right)\right).
\end{equation}
Then, using the fact that the antiderivative $\int \cot x dx=\ln|\sin x|$ we obtain
\begin{align*}
I_{a,b,c,d}&=\frac12\int_c^d\left(\cot\left(\frac{a-\phi}2\right)-\cot\left(\frac{b-\phi}2\right)\right)d\phi\\
&=\left(\ln\left|\sin\left(\frac{\phi-b}2\right)\right|-\ln\left|\sin\left(\frac{\phi-a}2\right)\right|\right)\biggr\vert_{\phi=c}^{\phi=d}\\
&=\ln\left|\sin\left(\frac{d-b}2\right)\right|+\ln\left|\sin\left(\frac{c-a}2\right)\right|-\ln\left|\sin\left(\frac{c-b}2\right)\right|-\ln\left|\sin\left(\frac{d-a}2\right)\right|\\
&=\ln\frac{|\sin\left(\frac{d-b}2\right)| |\sin\left(\frac{c-a}2\right)|}{|\sin\left(\frac{c-b}2\right)| |\sin\left(\frac{d-a}2\right)|}.
\end{align*}
Now, using the angular coordinates $p_i,q_i,a_i,b_i,c_i$ corresponding to the points $P_i$, $Q_i$, $A_i$, $B_i$, $C_i$, we obtain 
\begin{align*}
K_{\bar A}&=\sum_{i=1}^{8g-4}(I_{q_{i+2},p_{i-1},a_i,a_{i+1}}+I_{q_{i+1}q_{i+2},a_i,c_i}+I_{p_{i-1},p_i,b_i,a_{i+1}})\\
&=\ln\prod_{i=1}^{8g-4}\frac{|\sin\left(\frac{a_{i+1}-p_{i-1}}2\right)| |\sin\left(\frac{a_i-q_{i+2}}2\right)|}{|\sin\left(\frac{a_i-p_{i-1}}2\right)| |\sin\left(\frac{a_{i+1}-q_{i+2}}2\right)|}
+\ln\prod_{i=1}^{8g-4}\frac{|\sin\left(\frac{c_i-q_{i+2}}2\right)| |\sin\left(\frac{a_i-q_{i+1}}2\right)|}{|\sin\left(\frac{a_i-q_{i+2}}2\right)| |\sin\left(\frac{c_i-q_{i+1}}2\right)|}\\
&\quad+\ln\prod_{i=1}^{8g-4}\frac{|\sin\left(\frac{a_{i+1}-p_i}2\right)| |\sin\left(\frac{b_i-p_{i-1}}2\right)|}{|\sin\left(\frac{b_i-p_i}2\right)| |\sin\left(\frac{a_{i+1}-p_{i-1}}2\right)|}\\
&=\ln\prod_{i=1}^{8g-4}\frac{|\sin\left(\frac{c_i-q_{i+2}}2\right)| |\sin\left(\frac{b_i-p_{i-1}}2\right)|}{|\sin\left(\frac{b_i-p_i}2\right)| |\sin\left(\frac{c_i-q_{i+1}}2\right)|}.
\end{align*}
The last equality is obtained due to cancellations.
\end{proof}

The circle map $f_{\bar A}$ is a factor of $F_{\bar A}$ 
(projecting on the $y$-coordinate), so one can obtain its smooth 
invariant probability measure $d\mu_{\bar A}$ by integrating $d\nu_{\bar A}$ over $
\Omega_{\bar A}$ with respect to the $u$-coordinate. Thus, from the exact shape of the set $\Omega_{\bar A}$, we  
can calculate the invariant measure precisely.
\begin{prop} $\displaystyle d\mu_{\bar A}=\frac{1}{K_{\bar A}}\sum_{i=1}^{8g-4}\left(
\cot\left(\frac{q_{i+1}-\phi}2\right)-\cot\left(\frac{p_{i}-\phi}2\right)
\right)d\phi.$
\end{prop}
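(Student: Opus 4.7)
The plan is to derive $d\mu_{\bar A}$ by pushing the invariant measure $d\nu_{\bar A}$ of the natural extension $F_{\bar A}$ forward via the projection onto the $y$-coordinate. Thus, for each $\phi$, the density $\tfrac{d\mu_{\bar A}}{d\phi}(\phi)$ is computed as $\tfrac{1}{K_{\bar A}}$ times the integral of $|x-e^{i\phi}|^{-2}$ over the $x$-fiber of $\Omega_{\bar A}$ at height $y = e^{i\phi}$, so the whole argument reduces to an inner integral followed by a clean-up simplification.

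First, I would use the explicit rectangular decomposition \eqref{eq:oma} of $\Omega_{\bar A}$ to describe this fiber. For each strip $y\in [A_i,A_{i+1}]$, the $x$-fiber consists of the long counterclockwise arc $[Q_{i+2},P_{i-1}]$ coming from the main rectangle, extended by the corner pieces: additionally by $[Q_{i+1},Q_{i+2}]$ on the sub-interval $\phi\in[A_i,C_i]$, and by $[P_{i-1},P_i]$ on the sub-interval $\phi\in[B_i,A_{i+1}]$. In each of the three sub-strips the fiber is therefore a single arc whose endpoints lie in $\{Q_{i+1},Q_{i+2},P_{i-1},P_i\}$.

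Next, I would apply formula \eqref{ab} from the proof of Proposition \ref{prop:KA} to each rectangular piece: for $R=[a,b]\times[c,d]$ the contribution to the density at $\phi\in[c,d]$ is $\tfrac12\bigl(\cot(\tfrac{a-\phi}{2})-\cot(\tfrac{b-\phi}{2})\bigr)$. Adding the three contributions per strip, with the appropriate indicator functions for the corner pieces, yields an expression for $\tfrac{d\mu_{\bar A}}{d\phi}(\phi)$ as a sum of cotangent differences indexed by $i$.

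The final step is a telescoping simplification parallel to the one at the end of the proof of Proposition \ref{prop:KA}: the shared boundary values $Q_{i+2}$ and $P_{i-1}$ between the main rectangle and the two corner rectangles of the same strip produce pairwise cancellations, leaving only the ``outer'' endpoints $Q_{i+1}$ and $P_i$ from strip $i$. Summing the resulting per-strip expressions over $i=1,\ldots,8g-4$ should then collapse to the single-sum form $\sum_i\bigl(\cot(\tfrac{q_{i+1}-\phi}{2})-\cot(\tfrac{p_i-\phi}{2})\bigr)$ claimed in the Proposition. The main technical obstacle is the careful bookkeeping of the indicator functions $\chi_{[A_i,C_i]}$ and $\chi_{[B_i,A_{i+1}]}$ governing where each corner rectangle contributes, and verifying that the cancellations between adjacent pieces indeed produce the stated clean form.
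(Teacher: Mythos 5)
Your proposal follows the paper's proof exactly: push $d\nu_{\bar A}$ forward along the $y$-coordinate, write the fiber integral over the three rectangles of each strip from \eqref{eq:oma}, evaluate each piece with \eqref{ab}, and telescope. The only difference is precisely the point you flag as the main obstacle: the paper writes the density as $\frac{1}{K_{\bar A}}\sum_i\bigl(\int_{Q_{i+2}}^{P_{i-1}}+\int_{Q_{i+1}}^{Q_{i+2}}+\int_{P_{i-1}}^{P_{i}}\bigr)\frac{|dx|}{|x-y|^2}$ with no indicator functions at all, so the cancellation happens term by term inside each summand; it never addresses the fact that the corner rectangles $[Q_{i+1},Q_{i+2}]\times[A_i,C_i]$ and $[P_{i-1},P_i]\times[B_i,A_{i+1}]$ only contribute to the fiber when $\phi$ lies in $[A_i,C_i]$ or $[B_i,A_{i+1}]$ respectively. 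Your bookkeeping concern is therefore genuine and not resolved by the paper: carried out with the indicators, the fiber integral yields the clean expression $\cot\bigl(\frac{q_{i+1}-\phi}{2}\bigr)-\cot\bigl(\frac{p_i-\phi}{2}\bigr)$ only on the part of the strip where both corners are present, and the density has additional jumps at $\phi=B_i$ and $\phi=C_i$ where an arc enters or leaves the fiber; there is no cancellation against adjacent strips, since those are supported on disjoint $\phi$-intervals. So to reproduce the stated formula you must drop the indicators as the paper does; a rigorous completion of your plan would instead produce a piecewise formula refining the one in the Proposition.
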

\begin{proof}
\begin{equation*}
d\mu_{\bar A}=\frac{1}{K_{\bar A}}\sum_{i=1}^{8g-4}\left(\int_{Q_{i+2}}^{P_{i-1}}\frac{|dx|}{|x-y|^2}+\int_{Q_{i+1}}^{Q_{i+2}}\frac{|dx|}{|x-y|^2}+\int_{P_{i-1}}^{P_{i}}\frac{|dx|}{|x-y|^2}\right)|dy|.
\end{equation*}
Using the calculations (\ref{ab}) we obtain
\begin{align*}
 d\mu_{\bar A}&=\frac{1}{K_{\bar A}}\sum_{i=1}^{8g-4}\Bigg(
\cot\left(\frac{q_{i+2}-\phi}{2}\right)-\cot\left(\frac{p_{i-1}-\phi}{2}\right)\\
&\qquad\qquad\quad+\cot\left(\frac{q_{i+1}-\phi}2\right)-\cot\left(\frac{q_{i+2}-\phi}2\right)\\
&\qquad\qquad\quad+\cot\left(\frac{p_{i-1}-\phi}2\right)-\cot\left(\frac{p_i-\phi}2\right)\Bigg)d\phi\\
&=\frac{1}{K_{\bar A}}\sum_{i=1}^{8g-4}\left(
\cot\left(\frac{q_{i+1}-\phi}2\right)-\cot\left(\frac{p_{i}-\phi}2\right)
\right)d\phi.\qedhere
\end{align*}
\end{proof}

\section{Appendix}
In this section we use the explicit description of the fundamental domain $\F$ given in the Introduction to obtain certain estimates used in the proofs.

The fundamental domain $\F$ is a regular $(8g-4)$-gon bounded by  the isometric circles of the generating transformations of $\G$ with all internal angles equal to $\frac{\pi}2$.  Let us label the vertices of $\F$ by $V_1,\dots, V_{8g-4}$, where $V_i$ is the intersection of the geodesics $P_{i-1}Q_i$ and $P_iQ_{i+1}$ (see Figure \ref{anglecalc} for $g=3$). 
We first prove the following geometric lemma.

\begin{lem} \label{geometric} Consider five consecutive  isometric circles of $\F$: $P_{i-2}Q_{i-1}$, $P_{i-1}Q_i$, $P_iQ_{i+1}$, $P_{i+1}Q_{i+2}$, and $P_{i+2}Q_{i+3}$. Then 
\begin{enumerate}
\item [(i)] the angle between geodesics $V_{i+1}P_{i+2}$ and $V_{i+1}Q_{i+1}$ is greater than $\frac{\pi}{4}$,
\item [(ii)] the angle between geodesics $V_{i}Q_{i-1}$ and $V_{i}P_i$ is greater than $\frac{\pi}{4}$.
\end{enumerate}
\end{lem}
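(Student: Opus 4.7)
The plan is to realize each of the two angles in (i) and (ii) as $\pi/2-\alpha$ for a certain acute angle $\alpha$ in a hyperbolic right triangle with one ideal vertex, and then to deduce the required bound from the single inequality $\tanh s>\sqrt{2}/2$, where $s$ is the side length of the regular $(8g-4)$-gon $\F$.

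For part (i) I would look at the hyperbolic triangle $V_{i+1}V_{i+2}P_{i+2}$. The leg $V_{i+1}V_{i+2}$ is side $i+1$ of $\F$ and has length $s$, while $V_{i+2}P_{i+2}$ is the portion of the geodesic carrying side $i+2$ that runs from $V_{i+2}$ out to the ideal point $P_{i+2}\in\Sb$ on the opposite ray from $V_{i+3}$. Since $\F$ has interior angle $\pi/2$ at $V_{i+2}$, the angle of this triangle at $V_{i+2}$ is its supplement, again $\pi/2$, so the triangle is right-angled at $V_{i+2}$ with an ideal vertex at $P_{i+2}$ and finite leg of length $s$. Standard hyperbolic trigonometry for such a right triangle (or a direct computation placing $V_{i+2}$ at the origin of $\D$, $V_{i+1}$ at $(0,\tanh(s/2))$, and $P_{i+2}$ at $(-1,0)$) then gives
\[
\cos\alpha=\tanh s,\qquad \alpha:=\angle P_{i+2}V_{i+1}V_{i+2}.
\]
The ray $V_{i+1}Q_{i+1}$, being the extension of side $i$ past $V_{i+1}$, is perpendicular to the geodesic carrying $V_{i+1}V_{i+2}$; moreover, by the Bowen-Series extension property both $V_{i+1}Q_{i+1}$ and $V_{i+2}P_{i+2}$ leave $\F$ into the common exterior half-plane of that geodesic. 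Consequently $\angle P_{i+2}V_{i+1}Q_{i+1}=\pi/2-\alpha$, so (i) reduces to showing $\alpha<\pi/4$, i.e.\ $\tanh s>\sqrt{2}/2$.

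For this last inequality, the side length of a regular hyperbolic $(8g-4)$-gon with interior angles $\pi/2$ is determined by $\cosh(s/2)=\sqrt{2}\cos(\pi/(8g-4))$, obtained from the right triangle spanned by the centre, a vertex, and the midpoint of an adjacent side. Since this quantity is increasing in $g$, the smallest value of $\tanh s$ occurs at $g=2$, where $\cosh(s/2)=(\sqrt{3}+1)/2$, $\cosh s=1+\sqrt{3}$, and
\[
\tanh^2 s=1-\frac{1}{(1+\sqrt{3})^2}=\frac{3+2\sqrt{3}}{4+2\sqrt{3}}>\tfrac12,
\]
the final step being equivalent to $2+2\sqrt{3}>0$. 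Thus $\alpha<\pi/4$, proving (i). Part (ii) follows from the mirror argument applied to the triangle $V_iV_{i-1}Q_{i-1}$ (right angle at $V_{i-1}$, ideal vertex $Q_{i-1}$, finite leg $V_iV_{i-1}$ of length $s$): the angle $\beta:=\angle Q_{i-1}V_iV_{i-1}$ satisfies $\cos\beta=\tanh s$, and since $V_iP_i$ is perpendicular at $V_i$ to $V_iV_{i-1}$ and lies in the same exterior half-plane as $V_iQ_{i-1}$, we obtain $\angle Q_{i-1}V_iP_i=\pi/2-\beta>\pi/4$.

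The main point requiring care is the common-half-plane assertion. I would justify it by observing that each of the perpendicular rays $V_{i+1}Q_{i+1}$ and $V_{i+2}P_{i+2}$ is erected from the full geodesic carrying side $i+1$ into the exterior of $\F$ at the two endpoints $V_{i+1}$ and $V_{i+2}$ of that geodesic edge, and a ray erected perpendicularly from a geodesic in $\D$ never recrosses it; an identical observation handles part (ii). With this geometric bookkeeping in place, the lemma reduces to the single numerical estimate above.
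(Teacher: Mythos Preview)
Your argument is correct and is genuinely different from the paper's proof. The paper works entirely in Euclidean terms: it realizes the angle $\omega$ at $V_{i+1}$ as the angle between the Euclidean radii of the circles carrying the two geodesics, introduces the parameter $t=\pi/(8g-4)$, and after a chain of Law-of-Sines/Cosines identities reduces $\omega>\pi/4$ to the positivity of an explicit trigonometric function $g(t)$, which is then verified by a derivative computation on $(0,\pi/12]$. Your route bypasses all of this by recognizing the triangle $V_{i+1}V_{i+2}P_{i+2}$ as a right-angled triangle with one ideal vertex, so that the angle at $V_{i+1}$ is the Lobachevsky angle of parallelism $\Pi(s)$ with $\cos\Pi(s)=\tanh s$; the half-plane bookkeeping then gives $\angle P_{i+2}V_{i+1}Q_{i+1}=\pi/2-\Pi(s)$, and the whole lemma collapses to the single inequality $\tanh s>1/\sqrt{2}$, checked at $g=2$ via $\cosh(s/2)=\sqrt{2}\cos(\pi/(8g-4))$. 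This is considerably shorter and more conceptual; the paper's approach, on the other hand, stays self-contained within the Euclidean picture already set up in the Appendix and does not invoke hyperbolic trigonometric identities. Both are valid, but yours makes the dependence on the side length $s$ (and hence on $g$) completely transparent.
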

\begin{proof}
Let the Euclidean distance from the center of the unit disk $\D$, $O$, to the center of each isometric circle be $d$, the Euclidean radius of each isometric circle by $R$, and $v$ be the distance from $O$ to the vertex $V_{i+1}$
(see Figure \ref{anglecalc}). 
The angle between the imaginary axis and the ray from the origin to $V_{i+1}$ is equal to $t=\frac{\pi}{8g-4}$.  
The angle between geodesics $V_{i+1}P_{i+2}$ and $V_{i+1}Q_{i+1}$ is equal to the angle between the radii of the  Euclidean circles (of centers $O_i$, $O'_{i+1}$) representing these geodesics,  i.e., $\angle O_iV_{i+1}O'_{i+1}$.  Our goal is to express it as a function of $t$, $\omega(t)$.

Let $\varphi=\angle O_iOQ_{i+1}$. 
We have $\sin\varphi=|O_iQ_{i+1}|/d$, and $\sin t=|O_iH|/d$, where $O_iH\perp OH$.
Since the angle of $\F$ at $V_{i+1}$ is equal to $\frac{\pi}2$, $|O_iH|=|O_iV_{i+1}|/\sqrt{2}$, and since 
$|O_iV_{i+1}|=|O_iQ_{i+1}|=R$, we obtain
\begin{equation}
\sin\varphi=\sqrt{2}\sin t,
\end{equation}
and therefore
\begin{equation}
\cos\varphi=\sqrt{\cos(2t)}.
\end{equation}

\begin{figure}[tb]
\includegraphics[scale=0.72]{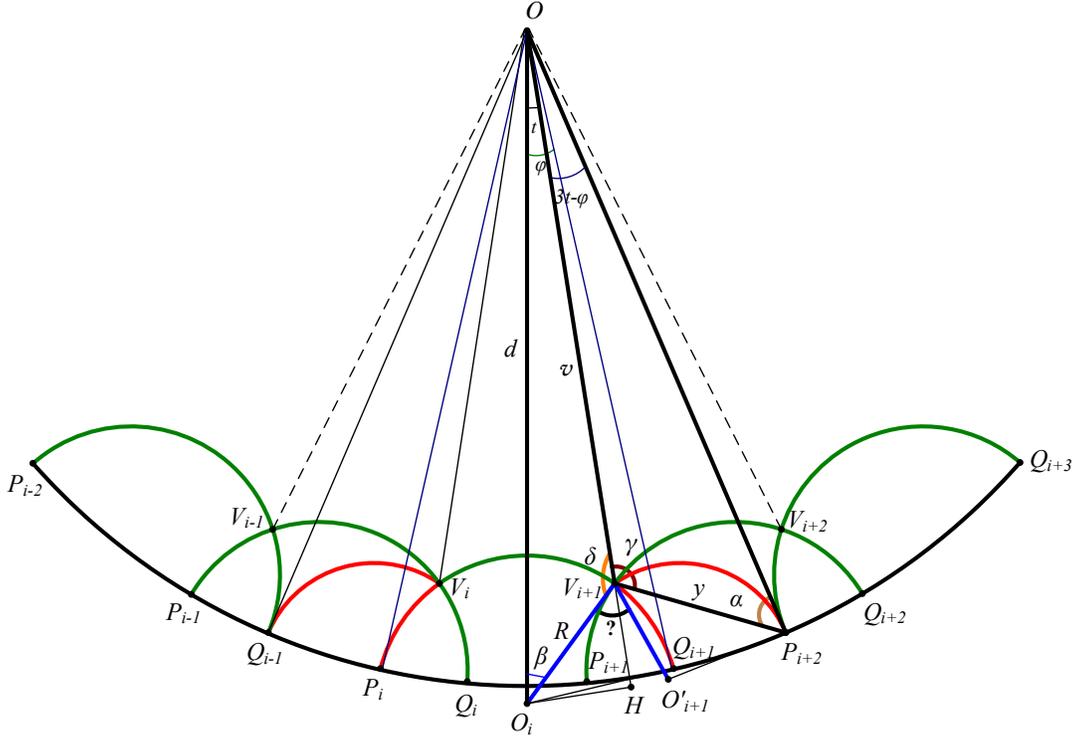}
\caption{Calculation of angle $\angle O_iV_{i+1}O'_{i+1}$}
\label{anglecalc}
\end{figure}

In the right triangle $\Delta O_iOH$ we have $|OH|=v+\frac{R}{\sqrt{2}}$ and $|O_iH|=\frac{R}{\sqrt{2}}$, hence by the Pythagorean Theorem,
\[
\left(v+\frac{R}{\sqrt{2}}\right)^2+\frac{R^2}2=d^2=\frac{R^2}{2\sin^2t},
\]
which implies
\[
v+\frac{R}{\sqrt{2}}=\frac{R}{\sqrt{2}}\cot t,
\]
and hence
\[
v=\frac{R}{\sqrt{2}}\left(\frac{\cos t}{\sin t}-1\right).
\]
Using that $R=\sqrt{2}\sin(t)d$ and $d=\displaystyle\frac{1}{\cos\varphi}=\frac{1}{\sqrt{\cos(2t)}}$, we obtain $R$ and $v$ as functions of $t$,
\begin{equation}
R(t)=\frac{\sqrt{2}\sin t}{\sqrt{\cos(2t)}}, \quad v(t)=\sqrt{\frac{\cos t-\sin t}{\cos t+\sin t}},
\end{equation}
and we now can express all further quantities as functions of $t$.

In the triangle $\Delta OO_iV_{i+1}$, let $\angle OO_iV_{i+1}=\beta(t)$ and $\angle OV_{i+1}O_i=\delta(t)$.
 In the triangle $\Delta OP_{i+2}V_{i+1}$, let $|V_{i+1}P_{i+2}|=y(t)$, $\angle OP_{i+2}V_{i+1}=\alpha(t)$, $\angle OV_{i+1}P_{i+2}=\g(t)$. One can easily see that $\angle V_{i+1}OP_{i+2}=3t-\varphi(t)$. 
Using the Rule of Cosines, we have
\[
y(t)^2=1+v(t)^2-2v(t)\cos(3t-\varphi).
\]
Using the Rule of Sines in the  triangles $\Delta OP_{i+2}V_{i+1}$ and $\Delta OO_iV_{i+1}$ we obtain
\[
\sin(\alpha(t))=\frac{v(t)\sin(3t-\varphi)}{y(t)},\quad
\sin(\beta(t))=\frac{v(t)\sin(t)}{R(t)}=\frac{\cos t-\sin t}{\sqrt{2}},
\]
and the last equation implies $\beta=\frac{\pi}{4}-t$.

The angle  $\omega(t)=\angle O_{i}V_{i+1}O'_{i+1}$ in question is calculated as
\[
\omega(t)=2\pi-\gamma(t)-\delta(t)-\left(\frac{\pi}{2}-\alpha(t)\right).
\]
Expressing $\gamma(t)$ and $\delta(t)$ from these triangles we obtain
\begin{equation}
\begin{aligned}
\omega(t)&=4t-\varphi(t)+2\alpha(t)+\beta(t)-\frac{\pi}{2}\\
&=4t-\varphi(t)+2\alpha(t)+\frac{\pi}{4}-t-\frac{\pi}{2}\\
&=3t-\varphi(t)+2\alpha(t)-\frac{\pi}{4}.
\end{aligned}
\end{equation}
We see that the desired inequality
\begin{equation}\label{P>}
\omega(t)>\frac{\pi}{4}
\end{equation} is equivalent to $3t-\varphi(t)+2\alpha(t)>\frac{\pi}{2}$, and since
from $\Delta OV_{i+1}P_{i+2}$ we have
\[
3t-\varphi(t)+\alpha(t)+\g(t)=\pi,
\]
 (\ref{P>}) is equivalent to 
 \begin{equation}\label{final}
 \g(t)-\alpha(t)<\frac{\pi}{2}.
 \end{equation}
 Recall that $\g(t)$ and $\alpha(t)$ are the angles of the triangle $\Delta OV_{i+1}P_{i+2}$, with $\g(t)> \frac{\pi}{2}$ and $\alpha(t)<\frac{\pi}{2}$, hence $0<\g(t)-\alpha(t)<\pi$.
  In order to prove (\ref{final}), we need to show that 
   \begin{equation}\label{diff}
   \cos (\g(t)-\alpha(t))>0.
   \end{equation} 
 Using the Rule of Sines we obtain
 \[
 \sin\g(t)=\frac{\sin\alpha(t)}{v(t)}.
 \]
 Using the Rule of Cosines we obtain
 \[
 \cos\g(t)=\frac{y^2(t)+v^2(t)-1}{2y(t)v(t)}\text{ and }\cos\alpha=\frac{1+y^2(t)-v^2(t)}{2y(t)}.
 \]
 In what follows we will suppress dependence of all functions on $t$. Thus
 \[
 \begin{aligned}
  \cos (\g-\alpha)&=\cos\g\cos\alpha+\sin\g\sin\alpha\\
  &=\frac{(y^2+v^2-1)(1+y^2-v^2)}{4y^2v}+\frac{\sin^2\alpha}{v}\\
 &=\frac{8v^2 -4v(1+v^2)\cos(3t-\varphi)}{4vy^2}.
 \end{aligned}
 \]
 Since $v$ and $y$ are positive, it is sufficient to prove the positivity of the function
\[
\begin{aligned}
g(t)&=\frac{2v}{(1+v^2)}-\cos(3t-\varphi)=\frac{\cos\varphi}{\cos t}-\cos(3t-\varphi)\\
&=\frac{\cos\varphi}{\cos t}-\cos((3t-2\varphi)+\varphi)\\
&=\frac{\cos\varphi}{\cos t}-(\cos(3t-2\varphi)\cos\varphi-\sin(3t-2\varphi)\sin\varphi)\\
&=\cos\varphi\left(\frac1{\cos t}-\cos(3t-2\varphi)\right)+\sin(3t-2\varphi)\sin\varphi.
\end{aligned}
\]
The first term is positive since $\cos\varphi$, $\cos t$ and $\cos(3t-2\varphi)$ are less than $1$. The second term is positive since 
\begin{equation} \label{positive}
3t-2\varphi>0.
\end{equation} 
The latter follows from the  fact that the function
$$h(t)=3t-2\varphi(t)=3t-2\arcsin(\sqrt{2}\sin t)$$
has second derivative
$$
h''(t)=-\frac{2\sqrt{2}\sin t}{\cos^{3/2}(2t)}
$$
negative on $(0,\pi/12]$, hence
$$h'(t)=3-\frac{2\sqrt{2}\cos t}{\cos^{1/2}(2t)}$$
is  decreasing on $(0,\pi/12]$, so $h'(t)\ge h'(\pi/12)=3-\displaystyle\frac{\sqrt{2}+\sqrt{6}}{{3}^{1/4}}>0$ for any $t\in (0,\pi/12]$. Thus, $h$ is strictly increasing on $(0,\pi/12]$, so $h(t)>h(0)=0$ for any $t\in (0,\pi/12]$ which implies \eqref{positive}.
Thus (\ref{P>}) follows.
The second inequality follows from the symmetry of the fundamental domain $\F$.
\end{proof}
In what follows $\ell$ will denote the arc length on the unit circle $\Sb$.
\begin{cor}\label{new}$ $
\begin{enumerate}
\item [(i)]There exist $a_j, b_j\in(P_j,Q_j)$ such that $d(P_j,a_j)>\frac12\ell(P_j,Q_j)$ and $\ell(b_j,Q_j)>\frac12 \ell(P_j,Q_j)$ such that $T_j(a_j)=P_{\rho(j)+1}$ and $T_{j-1}(b_j)=Q_{\theta(j-1)}$.
\item [(ii)] For any point $x\in [P_j,Q_j]$ such that $\ell(P_j,x)\le\frac{1}{2}\ell(P_j,Q_j)$, we have $T_j(x)\in [Q_{\sigma(j)+1},P_{\sigma(j)+2}]$. 
\item [(iii)] For any point $x\in [P_j,Q_j]$ such that $\ell(x,Q_j)\le\frac{1}{2}\ell(P_j,Q_j)$, we have
$T_{j-1}(x)\in [Q_{\theta(j-1)},P_{\theta(j-1)+1}]$.
\end{enumerate}
\end{cor}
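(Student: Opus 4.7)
My plan is to deduce Corollary~\ref{new} from Lemma~\ref{geometric} by exploiting the conformality of the M\"obius generators $T_j$ and $T_{j-1}$, combined with a symmetry argument that identifies $M_j$ geometrically.

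The starting point is a \emph{bisector property at $V_j$}: the radius from $O$ through $V_j$, extended to $\Sb$, passes through the midpoint $M_j$ of $(P_j,Q_j)$, because reflection of $\D$ across this radius is the symmetry of $\F$ that exchanges the two right-angled isometric circles $P_{j-1}Q_j$ and $P_jQ_{j+1}$ meeting at $V_j$ and consequently swaps $P_j\leftrightarrow Q_j$. A direct consequence is that, at $V_j$, the outward tangent to this radius (pointing toward $M_j$) bisects the right angle formed by the tangents along circle $j$ toward $P_j$ and along circle $j-1$ toward $Q_j$; hence from $V_j$ the three boundary directions (toward $P_j$, $M_j$, $Q_j$) appear in counterclockwise order at successive $\pi/4$ increments. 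Since boundary directions viewed from any interior point of $\D$ respect the counterclockwise cyclic order on $\Sb$, a point $x\in(P_j,Q_j)$ lies strictly past $M_j$ (counterclockwise) if and only if $\angle P_j V_j x>\pi/4$, and strictly before $M_j$ iff $\angle Q_j V_j x>\pi/4$.

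For~(i), Proposition~\ref{rem1} shows that $T_j$ maps $(P_j,Q_j)$ monotonically onto $(Q_{\rho(j)},Q_{\rho(j)+1})\ni P_{\rho(j)+1}$ and $T_{j-1}$ maps it onto $(P_{\theta(j-1)},P_{\theta(j-1)+1})\ni Q_{\theta(j-1)}$, so the points $a_j=T_j^{-1}(P_{\rho(j)+1})$ and $b_j=T_{j-1}^{-1}(Q_{\theta(j-1)})$ exist and lie in $(P_j,Q_j)$. By the bisector principle it suffices to verify $\angle P_j V_j a_j>\pi/4$ and $\angle Q_j V_j b_j>\pi/4$. Proposition~\ref{rem1} also gives $T_jV_j=V_{\rho(j)}$, and the tangent at $V_j$ along circle $j$ toward $P_j$ is carried by $T_j$ to the tangent at $V_{\rho(j)}$ along circle $\sigma(j)=\rho(j)-1$ toward $Q_{\rho(j)}$; conformality of $T_j$ then yields
\[
\angle P_j V_j a_j \;=\; \angle P_{\rho(j)+1}\,V_{\rho(j)}\,Q_{\rho(j)},
\]
and the right-hand side exceeds $\pi/4$ by Lemma~\ref{geometric}(i) applied with $i=\sigma(j)$. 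The argument for $b_j$ is symmetric: $T_{j-1}V_j=V_{\sigma(j-1)}=V_{\theta(j-1)+1}$, the tangent toward $Q_j$ along circle $j-1$ goes to the tangent toward $P_{\theta(j-1)+1}$ along circle $\sigma(j-1)$ at that image vertex, and Lemma~\ref{geometric}(ii) with $i=\sigma(j-1)$ supplies $\angle Q_j V_j b_j=\angle Q_{\theta(j-1)}\,V_{\theta(j-1)+1}\,P_{\theta(j-1)+1}>\pi/4$.

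Parts~(ii) and~(iii) then follow immediately from~(i) by monotonicity of the generators on $\Sb$: for $x\in[P_j,M_j]$ one has $T_jx\in[Q_{\rho(j)},T_jM_j]\subseteq[Q_{\sigma(j)+1},P_{\sigma(j)+2}]$ since $T_jM_j<T_ja_j=P_{\rho(j)+1}$ by~(i), and the identical reasoning with $T_{j-1}$ delivers~(iii). The principal obstacle I anticipate is purely index bookkeeping: confirming, under the intricate pairing rules $\sigma$, $\rho$, $\theta$, that $T_j$ and $T_{j-1}$ carry $V_j$ together with the designated tangent directions to exactly the configurations for which Lemma~\ref{geometric}(i) and~(ii) provide the angle bound. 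Once these identifications are in place, conformality of each generator discharges the required angle inequality in a single line.
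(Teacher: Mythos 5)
Your argument is correct and follows the paper's own route exactly: identify $M_j$ as the image of $V_j$ under the radial projection (so that $\angle P_jV_jM_j=\pi/4$ by the right angle at $V_j$), transport the angle by conformality of $T_j$ (resp.\ $T_{j-1}$), and invoke Lemma~\ref{geometric}(i) with $i=\sigma(j)$ (resp.\ (ii) with $i=\sigma(j-1)$); the index identifications $T_jV_j=V_{\rho(j)}$, $T_{j-1}V_j=V_{\theta(j-1)+1}$ that you flag as the remaining bookkeeping do check out. You simply supply more detail than the paper (the reflection-symmetry justification of the bisector property and the explicit deduction of (ii)--(iii) from (i)), so no further changes are needed.
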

\begin{proof} (i) Let $M_j$ be the midpoint of $[P_j,Q_j]$. Since the angle at each $V_j$ is equal to $\frac{\pi}2$, the angle between the geodesic segments $V_jP_j$ and $V_jM_j$ is equal $\frac{\pi}4$. Recall that $T_j([P_j,Q_j])=[Q_{\rho(j)},Q_{\rho(j)+1}]$. Since, by Lemma \ref{geometric} (i) for $i=\sigma(j)$,
 the angle between the geodesic segments $V_{\rho(j)}P_{\rho(j)+1}$ and $V_{\rho(j)}Q_{\rho(j)}$ is $>\frac{\pi}4$, and  $T_j$ is conformal, the existence of $a_j\in(M_j, Q_j)$ such that $T_j(a_j)=P_{\rho(j)+1}$ follows. Similarly, we know that $T_{j-1}([P_j,Q_j])=[P_{\theta(j-1)},P_{\theta(j-1)+1}]$. Since by  Lemma \ref{geometric} (ii) with $i=\sigma(j-1)$, the angle between the geodesic segments $V_{\sigma(j-1)}Q_{\theta(j-1)}$ and $V_{\sigma(j-1)}P_{\theta(j-1)+1}$ is greater than $\frac{\pi}{4}$ and $T_{j-1}$ is conformal, the existence of $b_j\in(P_j,M_j) $ such that $T_{j-1}(b_j)=Q_{\theta(j-1)}$ follows. Parts (ii) and (iii) follow immediately from (i).
\end{proof}

\begin{cor}\label{lem:app}
The arc length of the interval $T_k([P_{k+2},Q_{k+2}])$ is less than \linebreak$\frac 12$ of $[P_{\sigma(k)},Q_{\sigma(k)}]$ and the length of the interval $T_k([P_{k-1},Q_{k-1}])$ is less than $\frac12$ of $[P_{\sigma(k)+1},Q_{\sigma(k)+1}]$.
\end{cor}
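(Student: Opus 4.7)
My plan is to reduce both inequalities to direct applications of Corollary \ref{new}(i), exploiting the identity $T_{\sigma(k)}=T_k^{-1}$ coming from \eqref{r11}. The idea is that Corollary \ref{new}(i) tells us exactly where the preimages of boundary $P$'s and $Q$'s lie relative to the midpoint of each arc $[P_j,Q_j]$, so to locate the image of a boundary point under $T_k$ it is enough to realize that point as $T_{\sigma(k)}$ applied to one of the $a_j$'s or $b_j$'s produced by Corollary \ref{new}(i), for a suitable $j$.

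For the first statement, I would first use Proposition \ref{rem1} to record $T_k(Q_{k+2})=Q_{\sigma(k)}$, and then use orientation preservation together with the images $T_k(Q_{k+1})=P_{\sigma(k)}$ and $T_k(P_{k+1})=P_{\sigma(k)-1}$ listed in Proposition \ref{rem1} to conclude that $T_k([P_{k+2},Q_{k+2}])$ is an arc with endpoints $T_k(P_{k+2})$ and $Q_{\sigma(k)}$ lying inside $[P_{\sigma(k)},Q_{\sigma(k)}]$. The inequality then reduces to showing
\[
\ell(P_{\sigma(k)},T_k(P_{k+2}))>\tfrac{1}{2}\ell(P_{\sigma(k)},Q_{\sigma(k)}).
\]
I apply Corollary \ref{new}(i) with $j=\sigma(k)$: it yields $a_{\sigma(k)}\in(P_{\sigma(k)},Q_{\sigma(k)})$ past the midpoint with $T_{\sigma(k)}(a_{\sigma(k)})=P_{\rho(\sigma(k))+1}$. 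A short computation with $\rho(i)=\sigma(i)+1$ and $\sigma^2=\mathrm{Id}$ gives $\rho(\sigma(k))+1=k+2$, so $T_{\sigma(k)}(a_{\sigma(k)})=P_{k+2}$, and since $T_{\sigma(k)}=T_k^{-1}$ we get $a_{\sigma(k)}=T_k(P_{k+2})$, finishing the first half.

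The second inequality is symmetric. Using Proposition \ref{rem1}, $T_k(P_{k-1})=P_{\sigma(k)+1}$, and orientation shows $T_k([P_{k-1},Q_{k-1}])\subset[P_{\sigma(k)+1},Q_{\sigma(k)+1}]$ with one endpoint at $P_{\sigma(k)+1}$, so the inequality reduces to $\ell(T_k(Q_{k-1}),Q_{\sigma(k)+1})>\tfrac12\ell(P_{\sigma(k)+1},Q_{\sigma(k)+1})$. I apply Corollary \ref{new}(i) with $j=\sigma(k)+1$, which furnishes $b_{\sigma(k)+1}$ with the required length property and $T_{\sigma(k)}(b_{\sigma(k)+1})=Q_{\theta(\sigma(k))}$. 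Since $\theta(i)=\sigma(i)-1$, this equals $Q_{k-1}$, hence $b_{\sigma(k)+1}=T_k(Q_{k-1})$, as desired.

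There is no real obstacle here beyond bookkeeping: the entire geometric content is already packaged in Corollary \ref{new}(i), and the only task is keeping the index arithmetic straight, in particular verifying the identities $\rho(\sigma(k))+1=k+2$ and $\theta(\sigma(k))=k-1$, both of which follow at once from $\sigma^2=\mathrm{Id}$. I would present the two halves in parallel and close with a single sentence pointing out that the inverse identity $T_{\sigma(k)}=T_k^{-1}$ is what converts each ``past the midpoint'' statement in the domain of $T_{\sigma(k)}$ into a ``less than half the arc'' statement for the image under $T_k$.
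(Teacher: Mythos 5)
Your proof is correct and rests on the same geometric content as the paper's: the paper deduces the corollary directly from Lemma \ref{geometric} (the complementary angle at $V_{k+1}$, resp.\ $V_k$, being less than $\pi/4$) plus conformality, while you invoke the already-packaged Corollary \ref{new}(i) and identify $T_k(P_{k+2})=a_{\sigma(k)}$ and $T_k(Q_{k-1})=b_{\sigma(k)+1}$ via $T_{\sigma(k)}=T_k^{-1}$. The index checks $\rho(\sigma(k))+1=k+2$ and $\theta(\sigma(k))=k-1$ are right, so this is a valid (and slightly tidier) rendering of essentially the same argument.
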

\begin{proof}
By Proposition \ref{rem1}, we have $T_k(Q_{k+1})=P_{\sigma(k)}$ and $T_k(Q_{k+2})=Q_{\sigma(k)}$. The fact that  the length of $T_k([P_{k+2},Q_{k+2}])< \frac12 \ell(P_{\sigma(k)},Q_{\sigma(k)})$ is equivalent to the fact that $T_k(P_{k+2})\in[M_{\sigma(k)},Q_{\sigma(k)}]$, where $M_{\sigma(k)}$ is the middle of 
$[P_{\sigma(k)},Q_{\sigma(k)}]$. But the last statement follows from the fact that the angle between the geodesic $V_{k+1} P_{k+2}$ and the geodesic $V_{k+1}Q_{k+2}$ is less then $\frac{\pi}{4}$, a direct consequence of the fact that the angle in the part (i) of Lemma \ref{geometric} is greater that $\frac{\pi}{4}$. The second statement follows immediately  from the part (ii) of Lemma \ref{geometric}.
\end{proof}

\medskip

\noindent\textbf{Acknowledgments.}
We thank the anonymous referee for several useful comments and suggestions.

\end{document}